\newtheorem{thm}{Theorem}[section]
\newtheorem{cor}[thm]{Corollary}
\newtheorem{lem}[thm]{Lemma}
\newtheorem{prop}[thm]{Proposition}
\newtheorem{conj}[thm]{Conjecture}
\newtheorem{defn}[thm]{Definition}
\newtheorem{rem}[thm]{Remark}
\newtheorem*{thm*}{Theorem}
\newtheorem*{defn*}{Definition}
\numberwithin{equation}{section}
\newcommand{\dx}{\,{\rm d}x}
\newcommand{\dsi}{\,{\rm d}\sigma}
\newcommand{\rd}{{\rm d}}
\newcommand{\suubset}{\subset\joinrel\subset}
\newcommand{\NN}{\mathbb{N}}
\newcommand{\RR}{\mathbb{R}}
\newcommand{\Sf}{\mathbb{S}}
\newcommand\EE{{\mathcal{E}}}
\newcommand\WW{{\mathcal{W}}}
\newcommand{\vep}{\varepsilon}
\newcommand{\ep}{\epsilon}
\newcommand{\vth}{\vartheta}
\newcommand{\dd}{\delta}
\newcommand{\lm}{\lambda}
\def\dist{\mathrm{dist}} %per la distanza
\DeclareMathOperator*{\osc}{osc}
\begin{document}

\title[]{Interface regularity for semilinear one-phase problems}

\author[A. Audrito]{Alessandro Audrito}\thanks{}
\address{Alessandro Audrito \newline \indent
Department of Mathematics, ETH Zurich, \newline \indent
R\"amistrasse 101, 8092 Zurich, Switzerland
 }
\email{alessandro.audrito@math.ethz.ch}

\author[J. Serra]{Joaquim Serra}\thanks{}
\address{Joaquim Serra \newline \indent
Department of Mathematics, ETH Zurich, \newline \indent
R\"amistrasse 101, 8092 Zurich, Switzerland}
\email{joaquim.serra@math.ethz.ch}

%
%
%
%
%
%
%%%%%%%%%%%%%%%%%%%%%%%%%%%%%%%%%%%%%%%%%%%%%%%%%%%%%%%%%%%%%%%%%%%%%%%%%%%%%%%%%%%%%%%%%%%%%%%%%%%%%%%%
%%%%%%%%%%%%%%%%%%%%%%%%%%%%%%%%%%%%%%%%%%%%%%%%%%%%%%%%%%%%%%%%%%%%%%%%%%%%%%%%%%%%%%%%%%%%%%%%%%%%%%%%
%
%
%
%
%
%%%%%%%%%%%%%%%%%%%%%%%%%%%%%%%%%%%%%%%%%%%%%%%%%%%%%%%%%%%%%%%%%%%%%%%%%%%%%%%%%%%%%%%%%%%%%%%%%%%%%%%%
\date{\today} %%  this cancels date in article format

\keywords{Blow-down, improvement of flatness, one-phase free boundary problem.}
\subjclass[2010] {35J61,35J75,35J20,35R35}
%35B44, % Blow-up
%35K57, % Reaction-diffusion equations
%35J75, % Singular elliptic equations
%35J61, % Semilinear elliptic equations
%35J20, % Variational methods for second-order elliptic equations
%35R35, % Free boundary in PDEs
%}

\thanks{This project has received funding from the European Union's Horizon 2020 research and innovation programme under the Marie Sk{\l}odowska--Curie grant agreement 892017 (LNLFB-Problems). JS has been supported by the European Research Council (ERC) under the grant agreement 948029.}

\begin{abstract}
We study critical points of  a one-parameter family of functionals arising  in combustion models. The problems we consider converge, for infinitesimal values of the parameter,  to Bernoulli's free boundary problem, also known as one-phase problem.
We prove a $C^{1,\alpha}$ estimates for the ``interfaces'' (level sets separating the {\em burnt} and {\em unburnt} regions).
As a byproduct, we obtain the one-dimensional symmetry of minimizers in the whole $\RR^N$, for $N \le 4$, answering positively a conjecture of Fern\'andez-Real and Ros-Oton.

Our results are to Bernoulli's free boundary problem what Savin's results for the Allen-Cahn equation are to minimal surfaces.
\end{abstract}

\maketitle

%\tableofcontents

%%%%%%%%%%%%%%%%%%%%%%%%%%%%%%%%%%%%%%%%%%%%%%%%%%%%%%%%%%%%%%%%%%%%%%%%%%%%%%%%%%%%%%%%%%%%%%%%%%%%%%%%%%%%%

%%%%%%%%%%%%%%%%%%%%%%%%%%%%%%%%%%%%%%%%%%%%%%%%%%%%%%%%%%%%%%%%%%%%%%%%%%%%%%%%%%%%%%%%%%%%%%%%%%%%%%%%%%%%%
%
%
%
%
\section{Introduction}

In this paper we study critical points of the following (non-convex) energy functional
\begin{equation}\label{eq:Energy}
\EE_\vep(u,\Omega) := \int_\Omega |\nabla u|^2 + \Phi_\vep(u) \dx,
\end{equation}
where $\vep \in (0,1]$ is a parameter, $\Omega\subset \mathbb R^N$ some open domain,  and %the potential $\Phi_\vep$ is of the form
\begin{equation}\label{PHI000}
\Phi_\vep(t) := \Phi(t/\vep)
\end{equation}
\begin{equation}\label{PHI111}
 \Phi(t) := \begin{cases}  \int_0^t \beta(\tau) \rd\tau \quad &  \mbox{for }t\ge0  \\
  0 &  \mbox{for }t<0,
  \end{cases}
\end{equation}
for some given function $\beta\in C^\infty_c \big( [0,+\infty) \big)$ satifying
\begin{equation}\label{PHI222}
\beta\ge 0,  \quad \beta(0)=0, \quad \beta'(0)>0, \quad{\textstyle \int_0^\infty \beta =1}.
\end{equation}
When $\vep=1$, $\EE_1(u,\Omega)$ will be sometimes denoted by $\EE(u,\Omega)$. The assumption  $\beta'(0)>0$ is made for simplicity, but in all our main results it could be actually replaced by $\liminf_{\tau \downarrow 0} \beta(\tau) \tau^{-p} >0$  for some $p\in (1,\infty)$ ---see Remark \ref{rem:otherp}.

These type of functional arises in combustion models (e.g.  flame propagation)  \cite{CV95, BL82, DanielliEtAl2003:art, We03, PY07}, and were studied in detail in the book of Caffarelli and Salsa \cite{CafSal2005:book}.

\subsection*{Connection to the one-phase problem} Due to the assumptions on $\Phi$, as $\vep \downarrow 0$, the energy $\mathcal E_\vep$ formally converges towards
\begin{equation}\label{eq:Energy1Phase}
\mathcal{E}_0(u,\Omega) := \int_{\Omega} |\nabla u|^2 + \chi_{\{u>0\}} \dx.
\end{equation}
Critical points of $\mathcal E_0$ are solutions to Bernoulli's  (or one-phase) free boundary problem:
\begin{equation}\label{eq:Energy1Phase00}
u\ge 0,\quad \Delta u = 0 \quad \mbox{in } \{u>0\}, \quad \partial_n u=1 \quad \mbox{on } \partial\{u>0\},
\end{equation}
where $n$ is the inwards unit normal to $\partial\{u>0\}$. The regularity of solutions and free boundaries for minimizers of $\mathcal E_0$ has been extensively studied in \cite{Caffa1987:art, Caffa198:art, Caffa1988:art,CafJerKen2004:art,DeSilva2011:art,DeSilvaJerison2009:art, JerisonSavin2009:art} (see also the treatment given in \cite{Velichkov2019:art}).
The convergence of $\mathcal E_\vep$ towards  $\mathcal E_0$ as  $\vep\downarrow0$ is not merely formal: as proven in \cite[Theorem 1.15]{CafSal2005:book},  sequences of minimizers $u_ {\ep_k}$ of $\mathcal E_{\vep_k}$ converge as $\vep_k\downarrow 0$ (and up to subsequences) towards minimizers of the functional $\mathcal E_0$.

\subsection*{A conjecture ``alla De Giorgi''}  By the results in \cite{CafJerKen2004:art,JerisonSavin2009:art}, it is know that every minimizer $u_0: \mathbb R^N\to [0,\infty)$ of $\mathcal E_0$ in $\RR^N$  must have one-dimensional symmetry in dimensions $N\le 4$,  while this fails for $N \geq 7$ (see \cite{DeSilvaJerison2009:art}).  On the other hand if $u: \mathbb R^N\to \mathbb R_+$ is a minimizer of $\mathcal E=\mathcal E_1$  in $\RR^N$, then the {\em blow-down sequence} $u_{\vep_k}(x): =  \vep_k u(x/\vep_k)$, $\vep_k \downarrow 0$, are minimizers of  $\mathcal E_{\vep_k}$  in $\RR^N$. Since by \cite[Theorem 1.15]{CafSal2005:book} $u_{\vep_k}$ converges (up to subsequence and uniformly in every compact subset of $\RR^N$) to some entire minimizer of $\mathcal E_0$,  every blow-down of $u$ must  one-dimensional if $N\le 4$.  By analogy with De Giorgi's conjecture   for the Allen-Cahn equation (see for instance \cite{DeGiorgi1978:art,Savin2009:art}), Fern\'andez-Real and Ros-Oton raised the following
\begin{conj}[\cite{FerRealRosOton2019:art}]\label{conj:XX}
Let $N \leq 4$ and $u: \RR^N\to \mathbb \RR_+$ be a minimizer of  $\mathcal E$ in $\RR^N$  (see Definition \ref{def:LocMinimizer} below). Then, $u$ must be of the form
\begin{equation}\label{wtioewhot}
u(x) =  v(\nu\cdot x - l) \quad \mbox{where } v(t) = \psi^{-1}(t) \quad \mbox{for} \quad \psi(z): =   \int_{1} ^{z} \frac{\rd \zeta}{\sqrt{\Phi(\zeta)}},
\end{equation}
for some $\nu\in \mathbb S^{N-1}$ and $l \in \RR$.
\end{conj}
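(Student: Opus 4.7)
The plan is to combine the $C^{1,\alpha}$ interface regularity established in this paper with the classification of entire minimizers for the limiting one-phase problem \cite{CafJerKen2004:art, JerisonSavin2009:art}, via a blow-down plus improvement-of-flatness iteration. Given a minimizer $u$ of $\EE=\EE_1$ in $\RR^N$, I would first consider the rescalings $u_k(x) := \vep_k u(x/\vep_k)$ for $\vep_k \downarrow 0$, which are minimizers of $\EE_{\vep_k}$ in $\RR^N$. By \cite[Theorem 1.15]{CafSal2005:book}, up to a subsequence $u_k \to u_0$ locally uniformly, with $u_0$ an entire minimizer of $\EE_0$ in $\RR^N$. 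Since $N \leq 4$, the theorems of Caffarelli-Jerison-Kenig and Jerison-Savin force $u_0(x) = (\nu_0 \cdot x - c_0)_+$ for some $\nu_0 \in \Sf^{N-1}$ and $c_0 \in \RR$. Combined with the standard uniform Lipschitz and non-degeneracy estimates for minimizers of $\EE_\vep$, this implies that for every $\delta > 0$ there exists $R_\delta$ such that, after rescaling to unit size, $u$ at every scale $R \geq R_\delta$ is $\delta$-close in $L^\infty$ to a rotation/translation of the 1D profile $v$ in \eqref{wtioewhot}.

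Next I would apply the improvement-of-flatness form of the paper's main $C^{1,\alpha}$ interface estimate: there exist universal $\delta_0 > 0$, $\lambda \in (0,1)$ and $r \in (0,1)$ such that, whenever $u$ at scale $R$ is $\delta$-close (with $\delta \leq \delta_0$) to a 1D translate $v(\nu \cdot (\,\cdot\, - x_0))$, then at scale $rR$ it is $\lambda\delta$-close to another 1D translate with a nearby direction. Iterating this estimate $j$ times starting from a scale $R \geq R_{\delta_0}$ (available by the blow-down step), the flatness at scale $r^j R$ decays like $\lambda^j \delta_0$, and the sequence of directions $\nu_{R,j}$ and shifts $l_{R,j}$ is Cauchy, converging to some $\nu$ and $l$. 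Fixing a scale $\rho > 0$ and letting $R \to \infty$ along $\rho = r^j R$ (so $j \to \infty$), the flatness of $u$ at scale $\rho$ vanishes, whence $u \equiv v(\nu \cdot x - l)$ at every scale; thus globally in $\RR^N$.

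The profile $v$ itself is identified by the ODE $-2 v'' + \beta(v) = 0$ satisfied by every 1D critical point of $\EE$: multiplying by $v'$ and integrating, using $v, v' \to 0^+$ as $t \to -\infty$ (which follows from the positivity and minimality of $u$), gives $v'^2 = \Phi(v)$; separating variables and inverting yields $v = \psi^{-1}$ with $\psi$ as in \eqref{wtioewhot}. The main obstacle in the plan is the improvement-of-flatness step itself --- the novel technical content of this paper --- which demands universal-in-$\vep$ constants and flatness propagation at all scales above the semilinear length $\vep$; this is the analogue for Bernoulli's problem of Savin's theorem \cite{Savin2009:art} for Allen-Cahn. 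The blow-down classification (which gives only 1D limits along subsequences, without any rate) and the ODE identification of $v$ are comparatively routine, and serve only to feed and close the iteration.
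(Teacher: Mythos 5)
Your overall architecture (blow-down $\to$ classification of the $\vep=0$ limit in $N\le 4$ $\to$ improvement of flatness $\to$ ODE identification of the profile) matches the paper's, but the core iteration step as you describe it does not close, and the paper is explicit about why. The improvement-of-flatness theorem (Theorem \ref{lem:ImpFlat3}) is not applicable at all scales above the semilinear length $\vep$: it requires $\vep/R<\delta^2$, a nonlinear relation between the scale, the flatness and $\vep$ (this is forced by the error $\sqrt{\vep/R}$ in passing between the two notions of flatness, Lemma \ref{lem:ImpFlat2tris}). For the entire minimizer one has $\vep=1$ fixed, so if you start $\delta_0$-flat at a huge scale $R$ and iterate, at a mesoscale $R'$ the flatness has improved to $\delta\sim(R'/R)^{\gamma}\delta_0$, and the next application requires $1/R'<(R'/R)^{2\gamma}\delta_0^2$. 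This fails once $R'$ drops to a critical mesoscale of order $R^{\frac{2\gamma}{1+2\gamma}}$, which diverges as $R\to\infty$. Hence your scheme of ``fixing $\rho$ and letting $R\to\infty$ along $\rho=r^jR$'' is not legitimate: the iteration can never be continued down to a fixed scale $\rho$, and you do not obtain vanishing flatness at any fixed scale. What the iteration does yield (this is Step 2 of Theorem \ref{thm:AsymFlatImplies1D}) is uniform flatness ${\rm Flat}_1(e_N, CR^{-1/2},R)$ for all $R\ge C$ centered at every point of the transition layer $\{\vth_1\le u\le\vth_2\}$, i.e.\ the interface is trapped in a graph with H\"older-type sublinear oscillation, but this by itself is strictly weaker than one-dimensionality.

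The missing ingredient is the additional sliding-method step in the spirit of Berestycki--Caffarelli--Nirenberg (following \cite{DipierroEtAl2020:art}), carried out in Section \ref{Sec:AsymFlatImplies1D}: using the sublinear bound on the interface graph, one shows $u(\cdot-\lambda e)\le u$ first for large $\lambda$ and every direction $e=(e',\sigma)$ with $\sigma>0$, then for all $\lambda>0$ via a compactness/periodicity contradiction argument (Steps 4--6 of Theorem \ref{thm:AsymFlatImplies1D}), which gives monotonicity in every such direction and hence $\partial_{(e',0)}u\equiv0$, i.e.\ 1D symmetry; the comparison arguments there also rely on the exponential smallness of $u$ in $\{u<\vth_1\}$ and on the specific structure of $\Phi'$ near $0$. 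Two smaller points: the blow-down convergence is only along subsequences, so your claim that $u$ is $\delta$-flat ``at every scale $R\ge R_\delta$'' with a single direction is not available a priori (it is recovered a posteriori through the preservation-of-flatness corollary); and in the ODE identification, positivity alone does not force $v,v'\to0$ at $-\infty$ (the family (iii) of Lemma \ref{lem:1DSolution} is positive as well) --- one must use the asymptotic closeness to $(x_N)_+$ to exclude the other two families.
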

The results in this paper answer positively this conjecture.

\subsection*{Minimizers and critical points}  We define next minimizer and critical point of $\mathcal E_\vep$.
\begin{defn}\label{def:LocMinimizer}
Let $\Omega\subseteq \RR^N$ be some open domain ad let $\vep >0$. We say that $u_\vep \in H^1_{loc}(\Omega)$ is a {\em minimizer} of \eqref{eq:Energy} in $\Omega$ if for every $V \suubset \Omega$ and for every $\xi \in H^1_0(V)$  we have
\[
\EE_\vep(u_\vep,V) \leq \EE_\vep(u_\vep+\xi,V).
\]

\end{defn}
\begin{defn}\label{def:criticalpoint}
Let $\Omega$, $N$ and $\vep >0$, as in Definition \ref{def:LocMinimizer}. We say that $u_\vep \in H^1_{loc}(\Omega)$ is a {\em critical point} of \eqref{eq:Energy} in $\Omega$ if for every $V \suubset \Omega$ and for every $\xi \in H^1_0(V)$  we have
\[
\frac{d}{dt} \bigg|_{t=0} \EE_\vep(u_\vep + t\xi,V) =0 \qquad \Leftrightarrow \qquad \int _V 2\nabla u_\vep \cdot \nabla \xi  + \Phi_\vep'(u_\vep)\xi \, dx =0.
\]
\end{defn}
Notice that (after integration by parts) any critical point $u_\vep$ of satisfies
\begin{equation}\label{eq:EulerEq}
\Delta u_\vep = \tfrac{1}{2} \Phi_\vep'(u_\vep),
\end{equation}
in the weak sense. Since $\Phi'$ is smooth and bounded,  by elliptic regularity and the standard  ``bootstrap argument'' for semilinear equations,   any critical point is locally smooth (with estimates which degenerate in principle as $\vep\downarrow 0$) and  hence satisfies \eqref{eq:EulerEq} in the classical sense.

\subsection*{New results} We describe next the main results of the paper. Our main contribution is the following rigidity results for  critical points of $\mathcal E$ in $\RR^N$ which are ``asymptotic'' to $(\nu\cdot x)_+$ at very large scales. In its statement (and in the rest of the paper) we use the following convenient notation for inclusion of sets: we write ``$X\subset Y$ in $Z$'' when $X\cap Z\subset Y\cap Z$.

\begin{thm}\label{thm:main}
Let $\Phi$ be as in \eqref{PHI111}-\eqref{PHI222}.  There exist constants $\vartheta_1$ and $\vartheta_2$ depending only on $\Phi$  such that the following holds. Let  $u: \RR^N \to \RR_+$ be a critical point of $\mathcal E$ in  $\RR^N$.

Assume there exist $\nu \in \Sf^{N-1}$ and sequences $R_k \uparrow \infty$ and $\delta_k\downarrow 0$ such that
\begin{equation}\label{eq:1DSymAss10}
|u - (\nu \cdot x )_+| \leq \delta_{k} R_k \quad \text{ in } B_{R_k},
\end{equation}
and
\begin{equation}\label{eq:1DSymAss20}
\{ \nu \cdot x \leq -\delta_k R_k \} \subset \{ u \leq \vth_1 \} \subset \{ u \leq \vth_2 \} \subset  \{ \nu \cdot x \leq  \delta_k R_k \} \quad \text{ in } B_{R_k}.
\end{equation}

Then $u$ is of the form \eqref{wtioewhot}.
\end{thm}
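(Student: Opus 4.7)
The plan is to combine, in the spirit of Savin's proof of De Giorgi's conjecture for Allen--Cahn, the improvement of flatness (IoF) theorem for critical points of $\mathcal E_\vep$ (which one expects the paper to establish earlier en route to the $C^{1,\alpha}$ interface regularity, in the spirit of \cite{DeSilva2011:art}) with a blow-down compactness argument and a unique continuation step.

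\textbf{Rescaling.} For each $k$, set $u_k(y) := R_k^{-1} u(R_k y)$. A direct change of variables shows that $u_k$ is a critical point of $\mathcal E_{\vep_k}$ with $\vep_k := 1/R_k \downarrow 0$, and the hypotheses translate to $|u_k - (\nu\cdot y)_+|\leq \delta_k$ in $B_1$ together with the inclusion $\{\vep_k \vth_1 \leq u_k\leq \vep_k \vth_2\}\subset \{|\nu\cdot y|\leq \delta_k\}$ in $B_1$. Since $\delta_k\downarrow 0$ and $\vep_k\downarrow 0$, for $k$ large both the flatness and the scale parameter of $u_k$ fall inside the range of applicability of IoF.

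\textbf{Iteration of IoF.} By IoF one obtains unit vectors $\nu_m^{(k)}$ and shifts $l_m^{(k)}$ satisfying
\[
\bigl|u_k(y) - (\nu_m^{(k)}\cdot y - l_m^{(k)})_+\bigr| \leq C\, 2^{-m}\delta_k \eta^m \quad \text{in } B_{\eta^m},
\]
together with the corresponding localization of the interface. This iteration is valid for $0\leq m\leq m_k$ where $m_k\to\infty$, and the geometric decay yields Cauchy sequences $\nu_m^{(k)}\to \nu_*^{(k)}$ with $|\nu_*^{(k)}-\nu|\leq C\delta_k\to 0$, and similarly for the shifts.

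\textbf{Blow-down limit and rigidity.} By standard compactness for critical points (in the form used in \cite[Theorem 1.15]{CafSal2005:book}), along a subsequence $u_k\to u_0$ locally uniformly in $\RR^N$, where $u_0$ is a minimizer of $\mathcal E_0$. Step 2 forces $u_0=(\nu\cdot y)_+$ on $B_1$, hence (by the classification of blow-ups of the one-phase problem or unique continuation at the free boundary) on all of $\RR^N$. Applying the same argument centered at an arbitrary point $x_0\in\RR^N$ shows that, at the $O(1)$ scale, $u$ is arbitrarily close to the 1D semilinear profile $v(\nu\cdot(x-x_0) - l(x_0))$ in the original coordinates. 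Matching the expressions at two different centers forces $l(x_0) = \nu\cdot x_0 - l$ for a single constant $l$. Finally, since $u$ and $v(\nu\cdot x - l)$ both solve the semilinear equation $\Delta w = \tfrac12\Phi'(w)$ on $\RR^N$ and agree on an open set, unique continuation for semilinear elliptic equations gives \eqref{wtioewhot}.

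\textbf{Main obstacle.} The crux is the IoF for critical points of $\mathcal E_\vep$, uniform as $\vep\downarrow 0$: one must build sub/supersolutions matching $(\nu\cdot x-l)_+$ at large scales and resolving the 1D semilinear profile at scales comparable to $\vep$, and control the drift of the normal across the iteration. The other inputs (compactness from \cite{CafSal2005:book}, Liouville/unique continuation) are relatively standard once the correct quantitative IoF is in hand.
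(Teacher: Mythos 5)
Your overall architecture (rescale, iterate an improvement of flatness, pass to a blow-down, identify the 1D profile) is the natural Savin-type plan, and the compactness and ODE-classification inputs you invoke do appear in the paper. However, there is a genuine gap at the heart of the iteration step: you assert that the improvement of flatness can be iterated down to scales where the 1D semilinear profile is resolved (``at the $O(1)$ scale, $u$ is arbitrarily close to $v(\nu\cdot(x-x_0)-l(x_0))$''). This is exactly what fails here. The improvement of flatness in this problem (Theorem \ref{lem:ImpFlat3}) requires the nonlinear compatibility condition $\vep/R<\delta^2$, because the equivalence between the linear notion of flatness and being trapped between the 1D sub/supersolutions costs an error of size $\sqrt{\vep/R}$ (Lemma \ref{lem:ImpFlat2tris}). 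For the original solution $\vep=1$, so starting $\delta_0$-flat in $B_{R_k}$ and iterating, the flatness at scale $R'$ is only $(R'/R_k)^{\gamma}\delta_0$, and the condition $1/R'<(R'/R_k)^{2\gamma}\delta_0^2$ breaks down at the critical mesoscale $R'\sim R_k^{2\gamma/(1+2\gamma)}$, which still tends to infinity with $k$. Hence the iteration never reaches unit scale, you never obtain closeness of $u$ to the 1D profile on any fixed ball, and the subsequent ``matching of centers'' and unique-continuation step has nothing to act on (you only ever have approximate, not exact, agreement in any case). This is precisely the point where the present setting differs from Allen--Cahn, where $\vep$ and $\delta$ are comparable and the 1D symmetry is a direct corollary of the improvement of flatness.

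The paper closes this gap differently: from the (stalled) iteration it extracts a uniform estimate, valid at every point $z$ of the transition layer $\{\vth_1\le u\le\vth_2\}$ and every large radius $R$, of the form ${\rm Flat}_1(e_N,CR^{-1/2},R)$ with constants independent of $z$ (Step 2 of Theorem \ref{thm:AsymFlatImplies1D}); this yields a global H\"older-$\tfrac12$ graph control of the level sets, a global Lipschitz bound on $u$, and then a sliding argument \`a la Berestycki--Caffarelli--Nirenberg (following the scheme of \cite{DipierroEtAl2020:art}) proves $u(\cdot-\lambda e)\le u$ for every direction $e=(e',\sigma)$ with $\sigma>0$ and every $\lambda>0$, using comparison principles in cone-containing unbounded domains and a compactness/periodicity contradiction. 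Monotonicity in all such directions forces $u$ to depend on $x_N$ only, and the ODE classification (Lemma \ref{lem:1DSolution}, combined with the blow-down information to exclude the families (ii) and (iii)) then gives \eqref{wtioewhot}. To repair your proposal you would need to supply such an additional mechanism bridging the mesoscale $R_k^{2\gamma/(1+2\gamma)}$ down to scale $1$; iteration of the improvement of flatness alone cannot do it.
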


On the other hand, building on the  results of \cite[Chapter 1]{CafSal2005:book} (and introducing new ideas) we establish the following
\begin{prop}\label{prop:main}
Let $\Phi$  be as in \eqref{PHI111}-\eqref{PHI222} and let $\vartheta_1$ and $\vartheta_2$ be the constants from Theorem \ref{thm:main}. Let  $u: \RR^N \to \RR_+$ be a minimizer of $\mathcal E$ in $\RR^N$ which is not identically $0$.
Then, for every sequence  $R_k \uparrow \infty$ there exists a subsequence $R_{k_\ell}$, a 1-homogeneous minimizer $u_0$ of $\mathcal E_0$ in $\RR^N$ --- also not identically zero--- and a sequence $\delta_\ell\downarrow 0$ such that
\begin{equation}\label{eq:1DSymAss1XX}
|u - u_0| \leq \delta_{\ell} R_{k_\ell} \quad \text{ in } B_{R_{k_\ell}},
\end{equation}
and
\begin{equation}\label{eq:1DSymAss2XX}
\{ x: {\rm dist} (x,  \{u_0>0\}) \ge \delta_\ell R_{k_\ell} \} \subset \{ u \leq \vth_1 \} \subset \{ u \leq \vth_2 \} \subset  \{x: {\rm dist} (x,  \{u_0=0\}) \le \delta_\ell R_{k_\ell} \} \quad \text{ in } B_{R_k}.
\end{equation}
\end{prop}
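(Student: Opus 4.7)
My plan is a blow-down argument combined with a Weiss-type monotonicity formula for $\mathcal{E}_\vep$ and a comparison with the $1$D transition profile to handle the level sets. First I would set up the rescaled sequence $u_k(x) := R_k^{-1} u(R_k x)$, which by the scaling of $\mathcal{E}_\vep$ under $v(x) \mapsto \lambda^{-1} v(\lambda x)$ is a minimizer of $\mathcal{E}_{\vep_k}$ in $\RR^N$ with $\vep_k := 1/R_k \downarrow 0$. Applying \cite[Theorem~1.15]{CafSal2005:book} extracts a subsequence along which $u_{k_\ell} \to u_0$ locally uniformly, with $u_0$ a minimizer of $\mathcal{E}_0$; since $u \not\equiv 0$, the non-degeneracy of minimizers of $\mathcal{E}$ from \cite[Chapter~1]{CafSal2005:book} forces $\sup_{B_R} u \gtrsim R$ for $R$ large, so $\sup_{\overline{B_1}} u_0 > 0$ and $u_0 \not\equiv 0$.

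To establish the $1$-homogeneity of $u_0$ I would introduce the semilinear Weiss-type functional
\[
W_\vep(v, r) := r^{-N} \int_{B_r} \bigl(|\nabla v|^2 + \Phi_\vep(v)\bigr) dx - r^{-N-1} \int_{\partial B_r} v^2 \dsi,
\]
and, using $\Delta v = \tfrac12 \Phi_\vep'(v)$ together with a Pohozaev identity, verify
\[
\frac{r^{N+1}}{2} \frac{d}{dr} W_\vep(v, r) = \int_{\partial B_r} \bigl(r^{1/2} v_\nu - r^{-1/2} v\bigr)^2 \dsi + \frac{1}{2} \int_{B_r} v\, \Phi_\vep'(v)\, dx \geq 0,
\]
the bulk term being non-negative because $v \geq 0$ and $\beta \geq 0$. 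The scaling identity $W_1(u, tR) = W_{1/R}(u_R, t)$ combined with the Lipschitz and linear-growth bounds for entire minimizers gives that $W_1(u, r)$ is bounded and monotone, hence converges to some finite $L$ as $r \to \infty$. For each fixed $t > 0$, the energy convergence in \cite[Thm.~1.15]{CafSal2005:book}, the uniform convergence on $\overline{B_t}$, and the fact that the extra bulk term $\int_{B_t} v\, \Phi_{\vep_{k_\ell}}'(v)\, dx$ vanishes in the limit (since $\Phi_\vep'$ is supported on the $O(\vep)$-thin transition layer $\{v \leq C\vep\}$) together give $W_{\vep_{k_\ell}}(u_{k_\ell}, t) \to W_0(u_0, t)$. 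Hence $W_0(u_0, \cdot) \equiv L$, so by the classical Weiss monotonicity for $\mathcal{E}_0$, $u_0$ is $1$-homogeneous.

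The remaining, most delicate step is the two-sided inclusion \eqref{eq:1DSymAss2XX}, which after rescaling amounts to a quantitative Hausdorff convergence of $\{u_{k_\ell} \leq \vth_i \vep_{k_\ell}\}$ to $\{u_0 = 0\}$ inside $B_1$. The easier inclusion $\{u \leq \vth_2\} \subset \{\dist(\cdot, \{u_0 = 0\}) \leq \delta_\ell R_{k_\ell}\}$ follows directly from uniform convergence together with the linear non-degeneracy of $u_0$ from its zero set: if $\tilde x$ satisfies $\dist(\tilde x, \{u_0 = 0\}) \geq \delta_\ell$ then $u_0(\tilde x) \gtrsim \delta_\ell$, contradicting $u_{k_\ell}(\tilde x) \leq \vth_2 \vep_{k_\ell}$ as soon as $\delta_\ell \gg \|u_{k_\ell} - u_0\|_{L^\infty(B_1)}$.

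The hard part will be the reverse inclusion $\{\dist(\cdot, \{u_0 > 0\}) \geq \delta_\ell R_{k_\ell}\} \subset \{u \leq \vth_1\}$: uniform convergence only gives $u_{k_\ell} \to 0$ on the bulk of $\{u_0 = 0\}$, whereas in the original scale one needs the pointwise bound $u(x) \leq \vth_1$, i.e.\ the $O(\vep_{k_\ell})$ decay $u_{k_\ell}(\tilde x) \leq \vth_1 \vep_{k_\ell}$. To obtain this, my plan is to compare $u_{k_\ell}$ on a ball $B_{\delta_\ell/2}(\tilde x)$ sitting inside $\{u_0 = 0\}$ with a radial supersolution of $\Delta \bar w = \tfrac12 \Phi_{\vep_{k_\ell}}'(\bar w)$ built from the $1$D profile $v = \psi^{-1}$; since $\Phi_\vep'$ is supported in an $O(\vep)$-neighborhood of $0$, once $\bar w$ drops below $C \vep_{k_\ell}$ it becomes harmonic and decays with a universal rate, so matching $\bar w$ on $\partial B_{\delta_\ell/2}(\tilde x)$ to the uniform smallness $\|u_{k_\ell}\|_{L^\infty(B_{\delta_\ell}(\tilde x))} \to 0$ propagates the bound to the center and yields $u_{k_\ell}(\tilde x) \leq \vth_1 \vep_{k_\ell}$, provided $\delta_\ell$ is chosen to decay slowly enough relative to the uniform-convergence rate. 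Picking $\delta_\ell$ to dominate both rates simultaneously then produces the desired two-sided inclusion and completes the proof.
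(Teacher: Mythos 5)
Your overall blow-down-plus-Weiss skeleton matches the paper, and the ``easy'' inclusion and the non-triviality of $u_0$ are handled correctly. But the step you yourself flag as the hard one --- showing $\{ {\rm dist}(\cdot,\{u_0>0\})\ge \delta_\ell\}\subset\{u_{\vep_\ell}\le \vth_1\vep_\ell\}$ after rescaling --- cannot be proved by the radial-supersolution comparison you propose, because that argument uses only the equation $\Delta u_\vep=\tfrac12\Phi_\vep'(u_\vep)$ and the $o(1)$ smallness of $u_\vep$ on a ball of radius $\delta_\ell\gg\vep_\ell$. Any such argument is doomed: every constant $c\in[\vth_2\vep_\ell,\,\eta_\ell]$ (for instance $c=\sqrt{\vep_\ell}$) is an exact solution which is uniformly small yet nowhere below $\vth_1\vep_\ell$, so smallness in $L^\infty$ plus the PDE alone can never force the value at the center below $\vth_1\vep_\ell$. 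Concretely, two things break in your mechanism: (i) the comparison principle you need fails, since $\Phi_\vep'$ is not monotone (this is exactly why the paper's Lemma \ref{lem:ExpDecayu} only works \emph{after} assuming a priori that $u_\vep\le\vth_1\vep$ on a whole $\vep^{3/4}$-ball, where the linear bound in \eqref{eq:AssumptionsPhi} gives a genuine maximum principle); and (ii) in the range $u_\vep\ge\vth_2\vep$ the equation is just $\Delta u_\vep=0$, and harmonic functions provide no decay from the boundary toward the center, so ``once $\bar w$ drops below $C\vep$ it becomes harmonic and decays'' has no force. The missing ingredient is minimality: the paper proves a strengthened uniform non-degeneracy estimate (Lemma \ref{lem:NonDegEst}, obtained by an energy-competitor argument, valid at \emph{any} point of $\{u_\vep\ge\vth_1\vep\}$ and any radius $r\ge\kappa\vep$), and then argues by contradiction in Lemma \ref{lem:HaussConvGammaWeak}: a point of $\{u_\vep\ge\vth_1\vep\}$ at distance $\ge\sigma$ from $\{u_0>0\}$ would force $\sup_{B_{\sigma/2}}u_\vep\ge c\sigma$, contradicting $u_\vep\to u_0=0$ there. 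Some device of this kind (non-degeneracy or a clearing-out/density lemma for minimizers) is unavoidable, and your plan does not contain it.

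A secondary, repairable issue is the homogeneity step. You deduce $W_0(u_0,\cdot)\equiv L$ from $W_{\vep_\ell}(u_{\vep_\ell},t)\to W_0(u_0,t)$, but this requires $\int_{B_t}\Phi_{\vep_\ell}(u_{\vep_\ell})\to|\{u_0>0\}\cap B_t|$, and on the (possibly large) set $\{u_0=0\}$ uniform convergence does not prevent $u_{\vep_\ell}$ from being of size, say, $\sqrt{\vep_\ell}\ge\vth_2\vep_\ell$, where $\Phi_{\vep_\ell}(u_{\vep_\ell})=1$; controlling this again needs the very level-set information discussed above (also, the bulk term $\int v\,\Phi'_{\vep}(v)$ you mention is bounded but not obviously vanishing --- though it is not needed, being nonnegative). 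The paper sidesteps energy convergence entirely: it writes $\WW_\vep(u_\vep,r)=\WW(u,r/\vep)$, uses boundedness and monotonicity of $\WW(u,\cdot)$ to make the increment between scales $\varrho/\vep$ and $R/\vep$ vanish, and passes to the limit only in the boundary deviation term $\int_\varrho^R r^{-N}\int_{\partial B_r}(\partial_n u_\vep-u_\vep/r)^2$, which converges by the $H^1_{loc}\cap C^\alpha_{loc}$ convergence of Lemma \ref{lem:ConvFunctionals}; this yields $1$-homogeneity of $u_0$ without any convergence of the potential term, and is the cleaner route to adopt.
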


Combining Theorem \ref{thm:main},  Proposition \ref{prop:main}, and using the classification results for 1-homogeneous minimizers of $\mathcal E_0$ of \cite{CafJerKen2004:art,JerisonSavin2009:art} we obtain
\begin{cor}\label{cor:main}
Conjecture \ref{conj:XX} holds true.
\end{cor}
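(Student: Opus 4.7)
My plan is to combine the two main results of the paper with the (external) classification of $1$-homogeneous global minimizers of $\mathcal E_0$. Let $u:\RR^N\to\RR_+$ be a minimizer of $\mathcal E$ in $\RR^N$ with $N\le 4$. The case $u\equiv 0$ is trivial (and, strictly speaking, should be excluded from the statement of Conjecture \ref{conj:XX}, since it is not of the form \eqref{wtioewhot}), so I assume $u\not\equiv 0$. Picking any sequence $R_k\uparrow\infty$ and applying Proposition \ref{prop:main}, I obtain a subsequence $R_{k_\ell}$, a non-trivial $1$-homogeneous minimizer $u_0$ of $\mathcal E_0$ in $\RR^N$, and $\delta_\ell\downarrow 0$ for which \eqref{eq:1DSymAss1XX}--\eqref{eq:1DSymAss2XX} hold on $B_{R_{k_\ell}}$.

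Next, I invoke the classification of non-trivial $1$-homogeneous global minimizers of $\mathcal E_0$ in low dimensions: by \cite{CafJerKen2004:art} for $N\le 3$ and by \cite{JerisonSavin2009:art} for $N=4$, every such $u_0$ is, up to a rotation, the half-plane solution
\[
u_0(x)=(\nu\cdot x)_+ \quad \text{for some } \nu\in\Sf^{N-1}.
\]
With this identification, $\{u_0>0\}=\{\nu\cdot x>0\}$ and $\{u_0=0\}=\{\nu\cdot x\le 0\}$, so $\dist(x,\{u_0>0\})=(-\nu\cdot x)_+$ and $\dist(x,\{u_0=0\})=(\nu\cdot x)_+$. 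Substituting into \eqref{eq:1DSymAss1XX}--\eqref{eq:1DSymAss2XX} rewrites them, in $B_{R_{k_\ell}}$, as
\begin{gather*}
|u-(\nu\cdot x)_+|\le \delta_\ell R_{k_\ell}, \\
\{\nu\cdot x\le -\delta_\ell R_{k_\ell}\}\subset\{u\le\vth_1\}\subset\{u\le\vth_2\}\subset\{\nu\cdot x\le \delta_\ell R_{k_\ell}\},
\end{gather*}
which are precisely the flatness hypotheses \eqref{eq:1DSymAss10}--\eqref{eq:1DSymAss20} of Theorem \ref{thm:main} (with $R_k$ replaced by $R_{k_\ell}$). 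An application of Theorem \ref{thm:main} then forces $u$ to be of the form \eqref{wtioewhot}, which is Conjecture \ref{conj:XX}.

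The real difficulties in this argument lie in the ingredients rather than in their combination: Theorem \ref{thm:main} is a rigidity/flatness statement whose proof is the technical core of the paper; Proposition \ref{prop:main} rests on the Caffarelli--Salsa convergence of minimizers together with Weiss-type monotonicity and non-degeneracy; and the classification of $1$-homogeneous minimizers in dimensions $\le 4$ is itself a deep external result. Once these three inputs are granted, the proof of Corollary \ref{cor:main} reduces to the elementary observation that when $u_0$ is a half-plane solution the conclusion of Proposition \ref{prop:main} matches, term by term, the hypotheses of Theorem \ref{thm:main}, and no additional obstacle arises.
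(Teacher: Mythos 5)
Your proof is essentially identical to the paper's: apply Proposition \ref{prop:main} to obtain a blow-down $u_0$ that is a nontrivial $1$-homogeneous minimizer of $\mathcal E_0$, use the classification of \cite{CafJerKen2004:art,JerisonSavin2009:art} to identify $u_0=(\nu\cdot x)_+$, observe that the conclusions \eqref{eq:1DSymAss1XX}--\eqref{eq:1DSymAss2XX} then become the hypotheses \eqref{eq:1DSymAss10}--\eqref{eq:1DSymAss20} of Theorem \ref{thm:main}, and conclude. Your explicit remark that $u\equiv 0$ must be excluded and your spelling out of $\dist(x,\{u_0>0\})=(-\nu\cdot x)_+$ and $\dist(x,\{u_0=0\})=(\nu\cdot x)_+$ are small but correct clarifications that the paper leaves implicit.
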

%
%
%
%
%%%%%%%%%%%%%%%%%%%%%%%%%%%%%%%%%%%%%%%%%%%%%%%%%%%%%%%%%%%%%%%%%%%%%%%%%%%%%%%%%%%%%%%%%%%%%%%%%%%%%%%%%%%%%

%%%%%%%%%%%%%%%%%%%%%%%%%%%%%%%%%%%%%%%%%%%%%%%%%%%%%%%%%%%%%%%%%%%%%%%%%%%%%%%%%%%%%%%%%%%%%%%%%%%%%%%%%%%%%
%
%
%
%
\section{Overview of the proofs and organization of the paper}
The proof of Theorem \ref{thm:main} is split in several intermediate steps, some of them having independent interest. The main step (and our main contribution) is establishing an ``improvement of flatness'' result for critical points of $\mathcal E$ that we state below. Before that,  we need to introduce two positive constants $\vth_1$ and $\vth_2$, with $\vth_1<\vth_2$ and depending only on $\Phi$,  that will appear throughout the paper. 
Under our assumptions on $\Phi$ ---see \eqref{PHI000}-\eqref{PHI222}--- we can choose positive constants $\vth_1$, $\vth_2$, and $c_1$, such that the following holds:
\begin{equation}\label{eq:AssumptionsPhi}
\begin{cases}
\Phi = 0 \quad \text{in } (-\infty,0], \quad \Phi = 1 \quad \text{in } [\vth_2,\infty), \\
\tfrac{1}{c_1} u \leq \tfrac12\Phi'(u) \leq c_1 u, \quad  \forall u \in [0,\vartheta_1].
\end{cases}
\end{equation}

We can now give the statement of our ``improvement of flatness'' result.
\begin{thm}\label{lem:ImpFlat3}
Let $\Phi$  be as in \eqref{PHI111}-\eqref{PHI222} and let $\vartheta_1$ and $\vartheta_2$ as in \eqref{eq:AssumptionsPhi}. Fix $\gamma\in( 0,1)$. There exist constants $\delta_0>0$ and $\varrho_0\in(0,1/4)$ depending only on $N$ and $\Phi$, such that the following holds.
For every $R > 0$, every $\delta \in (0,\delta_0]$, every $\varepsilon/R \in (0, \delta^2)$, and every critical point $u_\varepsilon$ of \eqref{eq:Energy} in $B_R\subset \RR^N$ satisfying
\begin{equation}\label{eq:0FBpoint}
u_\vep(0) \in [\vartheta_1\vep, \vartheta_2\vep]
\end{equation}
and
\begin{equation}\label{eq:FlatnessScaleR}
\begin{array}{rlll}
{} &\hspace{-5pt} u_\vep(x) - x_N \leq \dd R \quad &\mbox{in }& B_R \cap \{u_\vep \geq \vartheta_1\vep\}\\
-\dd R \leq&\hspace{-5pt}  u_\vep(x) - x_N \quad   &\mbox{in }&B_R, 
\end{array}
\end{equation}
there exists $\nu \in \Sf^{N-1}$ such that
\begin{equation}\label{eq:FlatnessScaleRho0R}
\begin{array}{rlll}
{} &\hspace{-5pt} u_\vep(x) - \nu\cdot x \leq \dd \varrho_0^{1+\gamma}R \quad &\mbox{in }& B_{\varrho_0 R} \cap \{u_\vep \geq \vartheta_1\vep\}\\
-\dd  \varrho_0^{1+\gamma}R \leq&\hspace{-5pt}  u_\vep(x) -  \nu\cdot x \quad   &\mbox{in }&B_{\varrho_0 R} 
\end{array}
\end{equation}
with
\begin{equation}\label{eq:ImpFlat3ImpFlat2}
|\nu - e_N| \leq  \sqrt 2 N \delta.
\end{equation}
\end{thm}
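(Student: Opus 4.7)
The plan is to adapt the Savin--De~Silva compactness-and-contradiction scheme to the semilinear regularization of the one-phase problem. By the scaling $u_\vep(x)\mapsto R^{-1}u_\vep(Rx)$, which sends critical points of $\EE_\vep$ at scale $R$ to critical points of $\EE_{\vep/R}$ at scale $1$, we reduce to $R=1$, so that $\vep<\dd^2$ and both inequalities in \eqref{eq:FlatnessScaleR} hold on $B_1$. Suppose, for contradiction, that the conclusion fails. Then there exist $\dd_k\downarrow 0$, $\vep_k<\dd_k^2$ (so in particular $\vep_k/\dd_k\to 0$), and critical points $u_k$ of $\EE_{\vep_k}$ on $B_1$ satisfying \eqref{eq:0FBpoint}--\eqref{eq:FlatnessScaleR}, but such that no $\nu\in\Sf^{N-1}$ realizes \eqref{eq:FlatnessScaleRho0R}--\eqref{eq:ImpFlat3ImpFlat2} at a small scale $\varrho_0\in(0,1/4)$ to be chosen depending only on $N$ and $\gamma$.

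Introduce the renormalized deviation
\[
v_k(x) := \frac{u_k(x) - x_N}{\dd_k}, \qquad x\in B_1\cap\{u_k>\vartheta_2\vep_k\},
\]
which by \eqref{eq:FlatnessScaleR} satisfies $|v_k|\le 1$ on its domain and is harmonic there since $\Phi'(u_k/\vep_k)\equiv 0$. The technical core of the argument is a partial Harnack-type inequality (presumably established in an earlier section of the paper, in the spirit of De~Silva's argument for the sharp one-phase problem), asserting that the oscillation of $v_k$ decays geometrically along nested balls, uniformly in $\vep_k/\dd_k\to 0$. Iterated down to the scale of the transition region $\{0\le u_k\le\vartheta_2\vep_k\}$---which, since $\vep_k/\dd_k\to 0$, is squeezed into a tube of vanishing thickness around $\{x_N=0\}$---it yields a uniform H\"older modulus of continuity for the family $\{v_k\}$ up to the effective interface $\{u_k=\vartheta_2\vep_k\}$.

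By Arzel\`a--Ascoli, a subsequence of $v_k$ converges locally uniformly on $\overline{B_{1/2}^+}$, with $B_{1/2}^+:=B_{1/2}\cap\{x_N>0\}$, to a continuous function $v_\ast:\overline{B_{1/2}^+}\to[-1,1]$ that is harmonic in $B_{1/2}^+$. The linearization of the one-phase condition $|\nabla u_k|\to 1$ at the interface---formally, $(1+\dd_k\partial_N v_k)^2+\dd_k^2|\nabla_{x'}v_k|^2 = 1 + o(1)$ on the trace---forces the homogeneous Neumann condition $\partial_N v_\ast\equiv 0$ on $B_{1/2}\cap\{x_N=0\}$. Even reflection across $\{x_N=0\}$ then extends $v_\ast$ to a harmonic, hence smooth, function on $B_{1/2}$; condition \eqref{eq:0FBpoint} passes to the limit as $v_\ast(0)=0$. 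A second-order Taylor expansion, together with the interior $C^2$ estimate for a harmonic function bounded by $1$, produces a tangential vector $a\in\RR^{N-1}\times\{0\}$ with $|a|\le\sqrt{2}\,N$ and a constant $C_0(N)$ such that
\[
|v_\ast(x)-a\cdot x|\le C_0(N)\,|x|^2 \qquad \text{for } |x|\le 1/4.
\]

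Fix $\varrho_0\in(0,1/4)$ with $C_0(N)\varrho_0^{1-\gamma}\le 1/4$, and set $\nu_k := (e_N+\dd_k a)/|e_N+\dd_k a|\in\Sf^{N-1}$, so that $|\nu_k-e_N|\le\dd_k|a|\le\sqrt{2}\,N\dd_k$ and $\nu_k\cdot x = x_N+\dd_k\,a\cdot x+O(\dd_k^2|x|)$. Unfolding $u_k = x_N+\dd_k v_k$ and using the uniform convergence $v_k\to v_\ast$, for $k$ large and $x\in B_{\varrho_0}\cap\{u_k\ge\vartheta_1\vep_k\}$ we obtain
\[
u_k(x)-\nu_k\cdot x \;=\; \dd_k\bigl(v_k(x)-a\cdot x\bigr)+O(\dd_k^2\varrho_0) \;\le\; \dd_k\bigl(C_0(N)\varrho_0^2+o_k(1)\bigr)+O(\dd_k^2\varrho_0) \;\le\; \dd_k\varrho_0^{1+\gamma}.
\]
The matching lower bound on all of $B_{\varrho_0}$ follows from the same estimate on the positivity set together with control on the interfaces $\{u_k=c\}$ for $c\in[0,\vartheta_2\vep_k]$: the 1D profile \eqref{wtioewhot} forces these level sets to be pinched within $O(\vep_k)$ of one another in the normal direction, so the tilted plane $\{\nu_k\cdot x=0\}$ approximates the whole transition region with accuracy $o(\dd_k\varrho_0^{1+\gamma})$ since $\vep_k/\dd_k\to 0$. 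This contradicts the failure of the conclusion. The principal obstacle is precisely the partial Harnack step: unlike the sharp problem, the transition layer has width $\vep\ll\dd$, and the oscillation must be propagated across it with estimates uniform in this ratio, while still yielding the correct Neumann trace in the limit.
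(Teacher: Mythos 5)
Your overall scheme matches the paper's: rescale to $R=1$, argue by compactness--contradiction, extract a harmonic limit of the renormalized deviations on $\{x_N>0\}$, impose a Neumann condition at $\{x_N=0\}$, reflect evenly, and use the $C^2$ bound for bounded harmonic functions to pick $\nu$ and $\varrho_0$. The final Taylor-expansion step (choice of $a=\nabla v_*(0)$, $\nu=(e_N+\delta a)/|e_N+\delta a|$, the bound $|\nu-e_N|\le\sqrt2\,N\delta$, and fixing $\varrho_0$ with $C_0(N)\varrho_0^{1-\gamma}\le 1$) is correct and is exactly the paper's Step 4.

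However, there is a genuine gap in the crucial step: the derivation of the Neumann condition $\partial_N v_*\equiv 0$ on $\{x_N=0\}$. You justify it by writing ``the linearization of the one-phase condition $|\nabla u_k|\to 1$ at the interface, formally $(1+\delta_k\partial_N v_k)^2+\delta_k^2|\nabla_{x'}v_k|^2 = 1+o(1)$ on the trace.'' But in the $\vep$-regularized problem there \emph{is no} free-boundary condition of this type. The solution $u_k$ is strictly positive everywhere, the PDE $\Delta u_k=\tfrac12\Phi_{\vep_k}'(u_k)$ holds classically in all of $B_1$, and there is no trace on which $|\nabla u_k|$ is pinned to $1$. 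The ``interface'' is a band $\{\vth_1\vep_k\le u_k\le\vth_2\vep_k\}$ of thickness $O(\vep_k)$, and nothing in the equation alone tells you that the limit harmonic function has zero normal derivative there. The paper replaces this step by a fully rigorous viscosity argument: it shows $\partial_N\tilde v\le 0$ and $\partial_N\tilde v\ge 0$ separately by touching $\tilde v$ with quadratic polynomials, lifting them to functions of the form $w^{\pm\vep_j}(x_N+\delta_j\varphi+\lambda\vep_j^\sigma)$ built from the 1D sub/super-solutions of Lemma \ref{lem:1DSolution} and Remark \ref{rem:Choicettau}, and sliding in $\lambda$ until an interior contact point gives a contradiction via the maximum principle (Steps 2--3, relying heavily on Lemma \ref{lem:1DSolutionCompxwvep}, Lemma \ref{lem:ExpDecayu}, and the strict subsolution/supersolution property \eqref{eq:ImpFlat3StrictSubsol}). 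This is one of the main new contributions of the section; your proposal does not supply any substitute for it, and the formal linearization argument would not survive being made rigorous because there is no boundary condition to linearize.

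A secondary but also real gap is the partial Harnack / improvement-of-oscillation estimate, which you explicitly defer to ``an earlier section of the paper.'' This deferral is not innocent: as the paper emphasizes, the naive transplant of De Silva's Harnack (Lemma \ref{lem:ImpFlat11}) is \emph{not} usable, and the correct statement (Lemma \ref{lem:ImpFlat1}, with the nonlinear 1D barriers $w^\vep,w^{-\vep}$ rather than linear translates, and a sliding method to propagate the lift across the transition layer) is where the relation $\vep/R<\delta^2$ enters via Lemma \ref{lem:ImpFlat2tris}. Your proposal also glosses over the negative-side inequality on all of $B_{\varrho_0}$ (not just on the positivity set), which in the paper requires splitting $B_{\varrho_0}$ into $\{x_N>-\sqrt{\delta_j}\}$ and its complement and using the H\"older modulus from Lemma \ref{lem:ImpFlat2} together with $u_k\ge 0$.
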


Let us discuss some key aspects in the statement of Theorem \ref{lem:ImpFlat3}:

\vspace{3pt}

Assumption \eqref{eq:0FBpoint} must be though as the analogue of  asking $0$ to be a free boundary point in the one-phase setting ($\vep=0^+$). 
Indeed,  on the one hand it follows from the definition of $\vth_2$ that $u_\vep$ is harmonic in $\{u_\vep >\vth_2\vep\}$. 
On the other hand, using the definition of $\vth_1$ we will show (cf. Lemma \ref{lem:ExpDecayu}) that  $u_\vep$ has  ``exponentially small size in $\vep$''  inside $\{u_\vep < \vth_1\vep\}$. Consequently, the ``fat hypersurface'' $\{\vth_1\vep < u_\vep < \vth_2\vep \}$ is really analogous the free boundary in the one-phase setting.

\vspace{3pt}

Assumption \eqref{eq:FlatnessScaleR} and conclusion \eqref{eq:FlatnessScaleRho0R} must be thought, respectively, as a  $\dd$-flatness property of $u_\vep$ at scale $R > 0$ and a $(\varrho_0^{\gamma}\dd)$-flatness property at scale $\varrho_0 R$. 
In our framework this turns out to the appropriate notion of $\delta$-{\em flatness}.
As it is customary, the flatness is a dimensionless parameter: Roughly speaking, it measures the ratio between  $\min_{e\in \mathbb S^{N-1}} {\rm dist} \big(\{\vth_1\vep < u_\vep < \vth_2\vep \}\cap B_R , \{e\cdot x=0\} \cap B_R\big)$ and  $R$. 
With respect to \cite{DeSilva2011:art},  we remark that in \eqref{eq:FlatnessScaleR}-\eqref{eq:FlatnessScaleRho0R}  inequality from above is not required to hold in $\{u_\vep>0\}$, but only in  $\{u_\vep \geq \vartheta_1\vep\}$ (otherwise the result would be empty since non-zero solutions to our semilinear PDE are everywhere positive!).

\vspace{3pt}

The conclusion of the theorem can be phrased as an ``improvement of flatness'':   if $u_\vep$ is $\dd$-flat at scale $R$ (for small values of $\vep$ and $\dd$), it is $(\varrho_0^{\gamma}\dd)$-flat at scale $\varrho_0R$.

\bigskip

We now say a few words about the proof of Theorem \ref{lem:ImpFlat3}. 
In some sense, this proof is  an ``interpolation'' of  the proofs of De Silva  in \cite{DeSilva2011:art} and Savin in \cite{Savin2009:art} (although an additional ``sliding method'' step in the spirit of Berestycki, Caffarelli, Nirenberg \cite{BerCaffaNiren1997:art} is also needed, by similar reasons as in \cite{DipierroEtAl2020:art}). 
Indeed, our goal is to generalize the proof of De Silva \cite{DeSilva2011:art} for the one-phase free boundary problem to the setting of critical points of  $\mathcal{E}_\vep(\cdot,\RR^N)$). But since we need to go from a scaling invariant problem to a non-scaling invariant semilinear problem, there is an obvious  analogy with what Savin did in his celebrated paper \cite{Savin2009:art}. In this work Savin proved a version of the De Giorgi's improvement of flatness for area-minimizing hypersurfaces (a scaling invariant problem), in the framework of energy minimizers of the Allen-Cahn equation (a semilinear PDE).

Both Savin's and De Silva's proofs follow a ``small perturbations'' approach (linearization around flat solutions). 
In both cases --- although for different reasons--- the {\em deviation} between an almost-flat solution and the flat one which best approximates it, is found to be an ``almost-harmonic'' function. Further, in both proofs, the quadratic decay of  harmonic function towards their linear Taylor expansion is somehow transferred to the almost-flat solutions in order to obtain the improvement of flatness property. 
To accomplish this, both proofs use a delicate compactness argument, where deviations converge in $C^0$ towards some limit function which is proved to be harmonic in the viscosity sense. 
This type of argument requires some $C^\alpha$ estimate, or  {\em  improvement of oscillation estimate}, which guarantees the compactness  in $C^0$ (via Arzel\`a-Ascoli) of the sequences the deviations.

In our proof we also need such improvement of oscillation estimate, and finding an appropriate statement we could use in our setting turned out to be not easy at all!
Indeed, in a first ``naive approximation'', one could try  to  extend De Silva's improvement of oscillation (\cite[Theorem 3.1]{DeSilva2011:art})  to the semilinear setting as follows:  
\begin{lem}\label{lem:ImpFlat11}
Let  $v_\vep$ be the solution of \eqref{eq:EulerEq} in $\RR$ satisfying $v_\vep(0) =\vth_1\vep$  (see Lemma \ref{lem:1DSolution}, part (i)). 
There exist $\delta_0,c_0 \in (0,1)$ and $\theta_0 \in (0,1)$ depending only on $N$, $\Phi$ such that the following holds.  For every $R > 0$, every $\delta \in (0,\delta_0)$, every $a \in \mathbb{R}$ and $b \leq 0$ such that $a + |b| = \delta R$, every $\varepsilon/R \in (0,c_0 \delta)$ and every critical point $u_\varepsilon$ of \eqref{eq:Energy} in $B_R$ satisfying
\begin{equation}\label{eq:FlatCondSolFunIntro}
v_\varepsilon(x_N - a) \leq u_\varepsilon(x) \leq v_\varepsilon(x_N - b) \quad \text{ in } B_R, \\
\end{equation}
there exist $a' \in \mathbb{R}$, $b'\leq 0$ such that
\[
\begin{aligned}
&v_\varepsilon(x_N - a') \leq u_\varepsilon(x) \leq v_\varepsilon(x_N - b') \quad \text{ in } B_{R/4}, \\
&b \leq b' \leq a' \leq a,  \\
&a' + |b'| \leq \theta_0(a + |b|).
\end{aligned}
\]
\end{lem}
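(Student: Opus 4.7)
The strategy is to follow the template of De Silva's improvement of oscillation for the one-phase free boundary problem, adapted to the semilinear equation $\Delta u_\vep = \tfrac12 \Phi_\vep'(u_\vep)$; the role played by the hard threshold $t_+$ in De Silva's argument is taken by the one-dimensional profile $v_\vep$, which by \eqref{eq:AssumptionsPhi} is affine of slope $1$ for arguments $\ge \vartheta_2 \vep$ and exponentially small for arguments $\le 0$. The hypothesis $\vep/R \le c_0 \delta$ is essential: it forces the transition layer of $v_\vep$ (of thickness $\sim \vep$) to be much thinner than the shift $a - b = \delta R$ between the two comparison profiles, so that in most of $B_R$ both profiles lie in the linear (harmonic) regime of $v_\vep$.

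Set $\omega := a - b = \delta R$, $m := (a+b)/2$, and fix the anchor point $x^* := \tfrac{R}{2}\, e_N$. Since $x^*_N - a \gtrsim R \gg \vep$, the values $v_\vep(x^*_N - a)$, $v_\vep(x^*_N - m)$, $v_\vep(x^*_N - b)$ lie in the linear regime and differ pairwise by $\sim \omega$. The trapping \eqref{eq:FlatCondSolFunIntro} places $u_\vep(x^*)$ in one of the two halves of $[v_\vep(x^*_N - a), v_\vep(x^*_N - b)]$, yielding the dichotomy: either (I) $u_\vep(x^*) \ge v_\vep(x^*_N - m)$, or (II) $u_\vep(x^*) \le v_\vep(x^*_N - m)$. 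The two cases are analogous and improve, respectively, the lower or the upper barrier; I focus on (I).

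In case (I), define the deviation $h(x) := u_\vep(x) - v_\vep(x_N - a)$. On the bulk region $U_\vep := B_R \cap \{x_N > a + C_\Phi \vep\}$, both $u_\vep(x)$ and $v_\vep(x_N - a)$ exceed $\vartheta_2 \vep$, hence are harmonic by \eqref{eq:AssumptionsPhi}. Thus $h \ge 0$ is a nonnegative harmonic function on $U_\vep$ with $h(x^*) \gtrsim \omega$; Harnack's inequality yields $h \ge c_1 \omega$ on $B_{R/2} \cap \{x_N > a + 2 C_\Phi \vep\}$, which by the linearity of $v_\vep$ in the bulk reads $u_\vep(x) \ge v_\vep(x_N - a + c_1 \omega)$ on that set. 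I would then propagate this inequality down to all of $B_{R/4}$ by a sliding argument. Set
\[
t^* := \sup\bigl\{ t \in [0, c_1 \omega] :\ u_\vep(x) \ge v_\vep(x_N - a + t)\ \text{in } B_{R/4} \bigr\}.
\]
If $t^* < c_1 \omega$, a first contact point $\bar x \in \overline{B_{R/4}}$ exists. The bulk estimate excludes $\bar x \in U_\vep \cap B_{R/4}$, and a strict-inequality check on $\partial B_{R/4}$, combining the separation $v_\vep(x_N - b) > v_\vep(x_N - a + t^*)$ from the upper barrier with the exponential smallness of $u_\vep$ below the layer (Lemma~\ref{lem:ExpDecayu} of the paper), rules out boundary contact. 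Hence $\bar x$ is interior and lies in the strip $\{|x_N - a + t^*| \lesssim \vep\}$. Linearizing the equation at the contact, $\xi(x) := u_\vep(x) - v_\vep(x_N - a + t^*) \ge 0$ solves $\Delta \xi = q(x)\, \xi$ with $q$ obtained from a mean-value form of $\tfrac12 \Phi_\vep'$; the strong maximum principle then forces $\xi \equiv 0$ on the connected component of $\bar x$ in $B_{R/4}$, contradicting $\xi \ge c_1 \omega - t^* > 0$ at, for instance, $\tfrac{R}{8} e_N$. Hence $t^* = c_1 \omega$, and the conclusion holds with $a' := a - c_1 \omega$, $b' := b$, $\theta_0 := 1 - c_1 \in (0,1)$.

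The main obstacle is the interior contact analysis within the transition layer: because $\Phi_\vep'$ is not monotone in $u$, the difference of two solutions of the semilinear PDE does not satisfy a maximum-principle-friendly inequality directly, and the linearization must be carried out with some care (the resulting zeroth-order coefficient $q$ is not uniformly bounded in $\vep$, but for fixed $\vep$ the strong maximum principle still applies). A second technical point is the boundary check on the ``lower'' portion of $\partial B_{R/4}$, where $u_\vep$ and the shifted profile both lie in or below the exponentially small regime; this is where the capping $t^* \le c_1 \omega < \omega$ is crucial, as it keeps the shifted profile uniformly strictly below the upper barrier $v_\vep(x_N - b)$, ensuring that no contact can occur before the bulk estimate forces $t^* = c_1 \omega$.
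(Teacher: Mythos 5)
Your overall skeleton (dichotomy at an anchor point high above the layer, Harnack improvement for the deviation in the region where both functions are harmonic, then a sliding argument to propagate the gain down to $B_{R/4}$) is the same as in the paper's proof of Lemma~\ref{lem:ImpFlat1}, which is exactly the proof the paper's footnote says should be modified to obtain this statement. The interior-contact analysis you propose is also fine in principle: two ordered solutions of the same semilinear equation with Lipschitz nonlinearity that touch at an interior point of the set where they are ordered must coincide there, so for fixed $\vep$ the strong maximum principle step works. The genuine gap is the boundary-contact case of your sliding. You slide pure translates $v_\vep(x_N-a+t)$ over the \emph{fixed} ball $B_{R/4}$, so when $t^*<c_1\omega$ the first contact point may lie on $\partial B_{R/4}$, and the reasons you offer to exclude this do not work: the separation from the \emph{upper} barrier $v_\vep(x_N-b)$ is irrelevant (an upper bound on $u_\vep$ cannot prevent $u_\vep$ from touching a raised \emph{lower} barrier), while Lemma~\ref{lem:ExpDecayu} bounds $u_\vep$ from \emph{above} below the layer, which points in the wrong direction. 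Quantitatively, on the portion of $\partial B_{R/4}$ lying below the transition layer the only available lower bound is the hypothesis $u_\vep\ge v_\vep(x_N-a)$, whereas the raised translate there is larger by a factor of order $e^{ct^*/\vep}$ with $t^*$ possibly of order $\delta R\gg\vep$; so contact on that part of the boundary cannot be ruled out and the sliding may stall. A secondary inaccuracy: Harnack cannot give $h\ge c_1\omega$ down to height $a+2C_\Phi\vep$ with a constant independent of $\vep/R$ (a nonnegative harmonic function in that slab may vanish linearly at the layer); the improvement is only available at heights comparable to $R$, as in the paper where it is stated on $\{|x_N|\ge \tfrac1{16}\}$ after rescaling to $R=1$.

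This boundary issue is precisely what the paper's sliding is engineered to avoid. Instead of translating the one-dimensional profile, the paper slides the modulated family $v_\lambda(x)=w^{\vep,a}(x_N+\lambda h(x))$, where $h$ is harmonic in an annulus around the anchor point, equal to $1$ near its center and equal to $0$ on the outer sphere, and the sliding takes place in $D=B_{1/2}(y)\cap\{x_N<\tfrac1{16}\}$. Since $h$ vanishes on the outer boundary of $D$, the deformed barrier never exceeds the original barrier there, so the hypothesis (made strict by replacing $\delta$ with $\delta+1/j$) excludes boundary contact for free, and the top portion $\{x_N=\tfrac1{16}\}$ is handled by the Harnack gain; interior contact is excluded because $\partial_N h>0$ in $D$ makes the family satisfy the strict differential inequality \eqref{eq:ImpFlat1Ineqvt}, so no linearization is needed at the touching point. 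To repair your argument you would either have to adopt this modulated sliding (this is the ``small modification'' of the proof of Lemma~\ref{lem:ImpFlat1} the paper alludes to, with $v_\vep$ in place of $w_\vep^{\vep}$, $w_\vep^{-\vep}$), or else prove a lower bound for $u_\vep$ below the layer with exactly the sharp exponential rate of $v_\vep$, which none of the estimates in the paper provide.
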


Lemma \ref{lem:ImpFlat11} is true\footnote{By a small modification of the proof of Lemma \ref{lem:ImpFlat1}.}. Unfortunately, it seems useless: the reason is that we cannot exclude  the existence of minimizers  $\EE_\vep$ in $B_{2R}$ which are  $\frac{\delta}{100}$-close to $(x_N)_+$ ---with $\vep>0$ and $\vep/\delta$ arbitrarily small--- but failing to  satisfy \eqref{eq:FlatCondSolFunIntro}.

Lemma \ref{lem:ImpFlat1}, where the $\dd$-shifts of $v_\varepsilon$ are replaced by  $\dd$-shifts of two suitable $1D$ super and subsolutions, is the right replacement to the previous naive statement. We construct these useful super and subsolutions in Lemma \ref{lem:1DSolution} part (ii) and (iii). Since they play a very important role in the paper, we devote the entire Section \ref{Sec:ODEandBarriers} to the classification of $1D$ (super- and sub-) solutions and the study of their properties. We do not give yet the statement of Lemma \ref{lem:ImpFlat1} because such preliminaries are needed.

Let us remark that this notion of $\dd$-flatness consisting in ``being trapped'' between  $\dd$-shifts of $1D$ super and subsolutions is essentially equivalent to the notion
\eqref{eq:FlatnessScaleR} when $\vep\in (0, \delta^2)$ ---this is actually the reason behind this nonlinear relation between $\vep$ and $\delta$ in the statement of Theorem \ref{lem:ImpFlat3}.  Definition \ref{def:Fla1Flat2} and Lemma \ref{lem:ImpFlat2tris} establish this essential equivalence, when $\vep\in (0, \delta^2)$,  of the these two notions of flatness which are used throughout the paper.

Last, but not least, in order to prove Theorem \ref{thm:main} we need to be able to apply our new improvement of flatness result (Theorem \eqref{lem:ImpFlat3}) to $u_\vep : = \vep u(\, \cdot\, /\vep)$ where $u$ is a minimizer of $\EE_1$ in $\RR^N$, $N\le 4$. 
To do so, first we need to show that the assumption \eqref{eq:FlatnessScaleR} will be satisfied ---for  some $\delta= \delta_0$  and $R=1$--- when $\vep$ is taken sufficiently small. 
This part essentially combines previous results in \cite{CafJerKen2004:art,JerisonSavin2009:art} and \cite{CafSal2005:book} (altough some improvements are needed)
and it is contained in Section \ref{Sec:NonDeg}.
However there is an important difference with respect to \cite{Savin2009:art}  that is related to our assumption $\vep/R< \delta^2$ in Theorem \ref{lem:ImpFlat3}.  
Indeed, in contrast with the Allen-Cahn setting (where $\vep$ and $\dd$ are comparable and the analogue of Theorem \ref{thm:main} is a corollary of the improvement of flatness), in our setting  Theorem \ref{thm:main} does not follow as a direct consequence of Theorem \ref{lem:ImpFlat3}.
The reason is the following: suppose you want to apply Theorem \ref{lem:ImpFlat3} iteratively (in balls of radius $R\varrho_0^{-i}$) to an entire minimizer  $u$ of $\EE_1$, starting from a huge ball $B_R$ (for which  $u$ is $\delta_0$-flat).  Then, at a mesoscale $1\ll R'\ll R$ the flatness will have improved to $\delta =(R'/R)^\gamma \delta_0$. 
So,  if we want to continue applying  Theorem \ref{lem:ImpFlat3} to $u$  in $B_{R'}$,  we must check that $1/R'<(R'/R)^{2\gamma} \delta_0^2$ (since $\vep =1$) and hence, we will always reach a critical mesoscale $R' = CR^{\frac{2\gamma}{1+2\gamma}}$ for which we cannot continue iterating.
To solve this,  we need an additional ``sliding method'' step  in the spirit of  Berestycki, Caffarelli, Nirenberg \cite{BerCaffaNiren1997:art}. This last step follows the ideas of  \cite{DipierroEtAl2020:art} and is done in Section \ref{Sec:AsymFlatImplies1D}.

\begin{rem}\label{rem:otherp}
We assume $\beta'(0) >0$  for simplicity, although this assumption  is not really necessary.
Indeed, our same proofs gives almost identical results  if the assumption $\beta'(0) >0$ is relaxed to $\liminf_{t\downarrow 0}\beta(t) t^{-p} >0$, ,  for some $p > 1$. 

More precisely, Theorem \ref{lem:ImpFlat3} can be proved under this more general condition, up to assuming $\vep/R < \dd^q$ (instead of $\vep/R < \dd^2$ ), for some suitable $q = q(p)>2$.
The reason for this change  is  the following:  while $\beta'(0)>0$  implies the exponential decay increasing $1D$ solutions at $-\infty$,  $\beta(t) \geq t^p$ gives a slower power-like decay. Accordingly, the properties of 1D solutions like \eqref{eq:1DSignChangeSolProp} and \eqref{eq:1DEvenSolProp2} change to similar ones where powers replace logarithms.
Up to this changes, all of our statements and proofs are still valid ---with minor modifications--- in this more general framework. 
The most important modifications are localized in Section \ref{Sec:ODEandBarriers} and only propagate to rest of the paper thought  Lemma \ref{lem:ImpFlat2tris},  where the size of the error is not $\sqrt{\vep/R}$ but  $(\vep/R)^{1/q}$ (for some $q>2$). This is the reason why we need  to assume $\vep/R < \dd^q$ instead of $\vep/R <\dd^2$ in Theorem \ref{lem:ImpFlat3}.
By the rest, all the proofs remain essentially the same.
\end{rem}

%
%%%%%%%%%%%%%%%%%%%%%%%%%%%%%%%%%%%%%%%%%%%%%%%%%%%%%%%%%%%%%%%%%%%%%%%%%%%%%%%%%%%%%%%%%%%%%%%%%%%%%%%%%%%%%

%%%%%%%%%%%%%%%%%%%%%%%%%%%%%%%%%%%%%%%%%%%%%%%%%%%%%%%%%%%%%%%%%%%%%%%%%%%%%%%%%%%%%%%%%%%%%%%%%%%%%%%%%%%%%
%
%
%
%
\section{ODEs analysis and barriers}\label{Sec:ODEandBarriers}
In this section we consider the family of second order ODEs
\begin{equation}\label{eq:1DEqn}
\ddot u_\varepsilon = \tfrac{1}{2}\Phi_\varepsilon'(u_\varepsilon) \quad \text{ in } \RR,
\end{equation}
and we provide a classification of its solutions, for every $\varepsilon \in (0,1)$ fixed. With respect to \cite[Section 2.3]{FerRealRosOton2019:art}, our ODEs analysis shows finer properties of global solutions such as \eqref{eq:1DSignChangeSolProp}, \eqref{eq:1DEvenSolProp} and \eqref{eq:1DEvenSolProp2}, which will be needed later in the proofs our main theorems.
\begin{lem}\label{lem:1DSolution} (1D global solutions)
Fix $\varepsilon \in (0,1)$ and let $\Phi$ be as in \eqref{eq:AssumptionsPhi}. Then:

\smallskip

\noindent (i) Equation \eqref{eq:1DEqn} has a unique solution $v_\varepsilon$ with
\[
v_\varepsilon(0) = \vartheta_1\varepsilon, \qquad \lim_{x\to+\infty} \dot v_\varepsilon(x) = 1,
\]
which is implicitly given by 
\[
\int_{\vartheta_1\vep}^{v_\vep(x)} \frac{\rd w} {\sqrt {\Phi_\vep(w)}} = x.
\]
This solution $v_\vep$  is smooth, positive, increasing, convex,  and satisfies  $v_\varepsilon(x) \to 0$ as $x \to -\infty$.

\smallskip

\noindent (ii) For every $t > 0$, equation \eqref{eq:1DEqn} has a unique solution $v_\varepsilon^t$ with
\[
v_\varepsilon^t(0) = \vartheta_1\varepsilon, \qquad \lim_{x\to+\infty} \dot v_\varepsilon^t(x) = 1+t. 
\]
Moreover, $v_\varepsilon^t$ is of class $C^{2}$, increasing, convex, and satisfies $v_\varepsilon^t(x) \to -\infty$, $\dot v_\varepsilon^t(x) \to \sqrt{2t + t^2}$ as $x \to -\infty$. Also, if $x_\varepsilon^t$ is denotes the unique  root of $v_\ep^t$ ---i.e. the point where $v_\varepsilon^t(x_\varepsilon^t) = 0$---, then
\begin{equation}\label{eq:1DSignChangeSolProp}
x_\varepsilon^t \geq -  \vep{\sqrt{2c_1}} \log\bigg(1+\frac{\vartheta_1}{t}\bigg),
\end{equation}
where $c_1 > 0$ is the constant in \eqref{eq:AssumptionsPhi}.
\smallskip

\noindent (iii) For any $\tau \in (-1,0)$, equation \eqref{eq:1DEqn} has a unique solution $v_\varepsilon^\tau$ with
\[
v_\varepsilon^\tau(0) = \vartheta_1\varepsilon, \qquad \lim_{x\to+\infty} \dot v_\varepsilon^\tau(x) = 1-|\tau|.
\]
Moreover, $v_\varepsilon^\tau$ is smooth, positive, and satisfies $v_\varepsilon^\tau(x) \to +\infty$, $\dot v_\varepsilon^\tau(x) \to -1 + |\tau|$ as $x \to -\infty$. Also, $v_\varepsilon^\tau$ has a unique point of minimum $y_\varepsilon^\tau$  satisfying
\begin{equation}\label{eq:1DEvenSolProp}
\sqrt{\tfrac{|\tau|}{c_1}} \, \varepsilon \leq v_\varepsilon^\tau(y_\varepsilon^\tau) \leq \sqrt{2c_1|\tau|} \, \varepsilon,
\end{equation}
and
\begin{equation}\label{eq:1DEvenSolProp2}
 y_\varepsilon^\tau \geq - \vep \sqrt{2c_1}\bigg(2 + \log \frac{\vartheta_1}{\sqrt{2|\tau| /c_1}} \bigg),
\end{equation}
where $c_1 > 0$ is the constant in \eqref{eq:AssumptionsPhi}.
\end{lem}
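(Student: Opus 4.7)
The key observation is that \eqref{eq:1DEqn} is autonomous, hence admits the first integral
$$\dot u_\vep^2 - \Phi_\vep(u_\vep) = E,$$
obtained by multiplying the equation by $\dot u_\vep$ and integrating. Since $\Phi_\vep(v) \to 1$ as $v \to +\infty$, the asymptotic condition on $\dot v$ at $+\infty$ pins down the energy: $E = 0$ in (i), $E = (1+t)^2 - 1 = 2t + t^2 > 0$ in (ii), and $E = (1-|\tau|)^2 - 1 = -|\tau|(2-|\tau|) < 0$ in (iii). This reduces each second-order ODE to the first-order equation $\dot v = \pm\sqrt{E + \Phi_\vep(v)}$, which can be solved by separation of variables. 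Existence, uniqueness and smoothness then follow from standard ODE theory, the $C^2$-but-not-better regularity in (ii) reflecting that $\Phi \in C^{1,1}$ but not $C^2$ at the origin.

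For part (i), the condition $\dot v_\vep \to 1$ at $+\infty$ forces the positive branch throughout, so $\dot v_\vep = \sqrt{\Phi_\vep(v_\vep)}$; separating variables against $v_\vep(0) = \vth_1\vep$ produces the implicit formula, and $\ddot v_\vep = \tfrac12 \Phi_\vep'(v_\vep) \geq 0$ gives convexity. The lower bound $\Phi_\vep(w) \geq w^2/(c_1\vep^2)$ from \eqref{eq:AssumptionsPhi} makes $1/\sqrt{\Phi_\vep(w)} \sim C\vep/w$ near $w = 0^+$, so its integral diverges, forcing $v_\vep(x) \to 0^+$ as $x \to -\infty$. In part (ii), $\dot v_\vep^t \geq \sqrt{E}>0$ gives strict monotonicity and global existence; once $v$ drops below $0$, $\Phi_\vep(v) \equiv 0$, hence $\dot v \equiv \sqrt{E}$ and $v$ decays linearly to $-\infty$ with slope $\sqrt{2t+t^2}$. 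In part (iii), $\dot v^2 = E + \Phi_\vep(v) \ge 0$ forces $v \geq \vep\Phi^{-1}(|E|) =: v_{\min}$, with equality at the unique minimum $y_\vep^\tau$; since the ODE is autonomous and $\ddot v(y_\vep^\tau) = \tfrac12\Phi_\vep'(v_{\min}) > 0$, the profile $v_\vep^\tau$ is symmetric about $y_\vep^\tau$, and the asymptotics at $-\infty$ mirror those at $+\infty$.

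For the explicit estimates, separation of variables gives
$$-x_\vep^t = \int_0^{\vth_1\vep} \frac{dv}{\sqrt{E + \Phi_\vep(v)}}, \qquad -y_\vep^\tau = \int_{v_{\min}}^{\vth_1\vep} \frac{dv}{\sqrt{\Phi_\vep(v) - |E|}}.$$
For \eqref{eq:1DSignChangeSolProp}, inserting the lower bound $\Phi_\vep(v) \geq v^2/(c_1\vep^2)$ reduces the first integral to an explicit $\sinh^{-1}$, yielding a bound of order $\vep\sqrt{c_1}\log\big((\vth_1 + \sqrt{\vth_1^2 + c_1 E})/\sqrt{c_1 E}\big)$; combined with $E \ge 2t$ and elementary log-inequalities, this gives \eqref{eq:1DSignChangeSolProp}. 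For \eqref{eq:1DEvenSolProp}, applying $w^2/c_1 \le \Phi(w) \le c_1 w^2$ on $[0,\vth_1]$ to the equation $\Phi(v_{\min}/\vep) = |E|$ yields the bracket, together with the trivial $|\tau| \le |E| \le 2|\tau|$.

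The main technical point is \eqref{eq:1DEvenSolProp2}: the integrand has an integrable square-root singularity at $v = v_{\min}$, which I would handle by splitting the interval into a boundary layer $[v_{\min}, 2v_{\min}]$, where a Taylor expansion $\Phi_\vep(v) - |E| \approx \Phi_\vep'(v_{\min})(v - v_{\min})$ combined with $\Phi_\vep'(v_{\min}) \gtrsim v_{\min}/\vep^2$ gives an explicit bound, and a bulk part $[2v_{\min}, \vth_1\vep]$, where $\Phi_\vep(v) - |E| \gtrsim v^2/\vep^2$ produces a logarithmic estimate as in part (ii). The fiddly part throughout is matching the stated constants, which requires careful choice of the splitting point and repeated use of elementary inequalities; conceptually, however, everything reduces to the first integral and direct quadrature.
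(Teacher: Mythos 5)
Your proposal takes essentially the same route as the paper: you pass to the first integral $\dot u^2 - \Phi_\vep(u) = E$, classify solutions by the energy determined by the slope at $+\infty$, and obtain all quantitative bounds by quadrature using the two-sided comparison $\frac{w^2}{c_1}\le\Phi(w)\le c_1 w^2$ on $[0,\vth_1]$ (which follows by integrating \eqref{eq:AssumptionsPhi}). In particular, your proposed split of the integral for \eqref{eq:1DEvenSolProp2} at $2v_{\min}$ is exactly the paper's split at $\omega=2$ after the substitution $\omega = w/v^\tau(y^\tau)$, and your explanation of the $C^2$-only regularity in (ii) via the sign change through the $C^{1,1}$-but-not-$C^2$ point of $\Phi$ matches the paper's implicit reasoning.
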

\begin{proof}
After scaling, let us assume $\varepsilon = 1$ and set $u = u_\varepsilon$, $v = v_\varepsilon$, $v^t= v_\varepsilon^t$ and $v^\tau = v_\varepsilon^\tau$.

Since $\Phi'$ is bounded, nonnegative and continuous, a local $C^2$ solution $u = u(x)$ to \eqref{eq:1DEqn} with $(u(0),\dot{u}(0)) = (\vartheta_1,\dot{u}_0)$ exists and it is convex on its maximal interval of definition $I$. Using the assumptions on $\Phi'$, it is not difficult to see that $I = \mathbb{R}$. Further, since \eqref{eq:1DEqn} is invariant under even reflections ($x \to -x$), we assume $\dot{u}_0 > 0$.

\smallskip

\emph{Step 1.}  Since $\dot u$ is nondecreasing the limits $\lim_{x\to \pm\infty} \dot u$ exist. Since $\dot{u}_0 > 0$ we see that $u(x) \to +\infty$ as $x \to +\infty$. Let us define 
\[
 \lim_{x \to +\infty} \dot{u}(x) =: A\in (0,+\infty).
\]
Hence, using that the Hamiltonian $x \to \dot u(x)^2 - \Phi(u(x))$ must be constant  (and $\Phi(u) =1$ for $u>0$ large enough) we obtain
\begin{equation}\label{eq:ODEEnergyCons}
\dot u(x)^2 - \Phi(u(x)) \equiv  A^2 - 1, \quad x \in \mathbb{R}.
\end{equation}

\smallskip

\emph{Step 2.}  Let us classify first monotone solutions:  assume $\lim_{x\to -\infty}  \dot{u} \geq 0$ and hence $\dot u>0$ in $\RR$. 
In this case (since $\Phi =\Phi'(u) =0$ for $u<0$) we  obtain that either 
\[
\lim_{x \to -\infty} u(x) =0 \quad \mbox{and} \quad \lim_{x \to -\infty} \dot u(x) =0
\]
or 
\[
\lim_{x \to -\infty} u(x) =-\infty  \quad \mbox{and} \quad \lim_{x \to -\infty} \dot u(x) =: B  \in (0,A).
\]
From  \eqref{eq:ODEEnergyCons}, we obtain that in the first case $A=1$, while in the second one we have
\[
A^2-B^2 = 1,
\]
and hence $A>1$.

Now in the first case integrating \eqref{eq:ODEEnergyCons}  ---with $A=1$--- we get
\begin{equation}\label{eq:1DImplicitv}
\int_{v(y)}^{v(x)} \frac{\rd w}{\sqrt{\Phi(w)}} = x - y,
\end{equation}
for every $y \leq x$ and so (i) follows. The solution in (ii), is obtained in the case $A = 1+t$, so  $B^2 = A^2 - 1 = 2t + t^2$. 
To complete (ii) we are left to show \eqref{eq:1DSignChangeSolProp}. 
Integrating \eqref{eq:ODEEnergyCons} between $x^t\le 0$ (the root of $v^t$) and $0$  (recall $v^t(0) = \vartheta_1$) and using \eqref{eq:AssumptionsPhi} we obtain
\[
\begin{aligned}
0 - x^t &= \int_{0}^{\vartheta_1} \frac{\rd w}{\sqrt{\Phi(w) + 2t + t^2}} \le  \int_{0}^{\vartheta_1} \frac{\rd w}{\sqrt{\frac{1}{2c_1} w^2 + 2t +t^2}} \\
& \leq \sqrt{2c_1} \int_{0}^{\vartheta_1} \frac{\rd w}{w+t}   = \sqrt{2c_1} \log\bigg(1+\frac{\vartheta_1}{t}\bigg).
\end{aligned}
\]

\smallskip

\emph{Step 3.} Let us consider now the case where $\dot u$ changes sign. If so, there is $x_0 \in \mathbb{R}$ such that $\dot{u}(x) \leq 0$ for $x \leq x_0$ and $\dot{u}(x) \geq 0$ for $x \geq x_0$ (by convexity of $u$). Since the equation is invariant under the reflection $x \mapsto2x_0 -x$, it follows that $u(x) = u(2x_0 -x)$ and thus $\lim_{x\to -\infty} \dot u = -A$.  Note that the solutions  $u = v^\tau$ described in  (iii) corresponds to the setting  $A = 1-|\tau|$, with $\tau \in (-1,0)$.

To show \eqref{eq:1DEvenSolProp}, we notice that if $y^\tau$ is the minimum point of $v^\tau$, then $\dot v^\tau(y^\tau) = 0$. Thus,  by \eqref{eq:ODEEnergyCons}, it follows
\begin{equation}\label{eq:1DEvenSolEnergyMin}
\Phi(v^\tau(y^\tau)) = 2|\tau| - \tau^2.
\end{equation}
Using again \eqref{eq:AssumptionsPhi} ---note that $v^\tau(y^\tau)< v^\tau(0) = \vartheta_1$---  we obtain 
\[
 \frac{|\tau|}{c_1} \le  \frac{2|\tau| - \tau^2}{c_1} \le \frac{1}{2} (v^\tau(y^\tau))^2 \le c_1(2|\tau| - \tau^2) \le  2c_1 |\tau| 
\]
and \eqref{eq:1DEvenSolProp} follows.

We are left to prove \eqref{eq:1DEvenSolProp2}. We use now \eqref{eq:AssumptionsPhi} to obtain that, for all $w\in (v^\tau(y^\tau), \vartheta_1)$,
\begin{equation}\label{whtiowhtoiw}
\begin{split}
\Phi(w) - 2|\tau| +\tau^2 &= \Phi(w) - \Phi(v^\tau(y^\tau)) = \int_{v^\tau(y^\tau)}^{w}  \Phi'(t) \,\rd t \ge \frac 1 {c_1}  \big[t^2 \big]_{v^\tau(y^\tau)}^w  = \frac 1{c_1} \big(w^2 - (v^\tau(y^\tau))^2\big) 
\\
&\ge \frac w{2c_1} \big(w - v^\tau(y^\tau)\big).
\end{split}
\end{equation}
Hence, integrating \eqref{eq:ODEEnergyCons} between $y^\tau$ and $0$  (recall $v^\tau(0) = \vartheta_1$) we obtain 
\[
\begin{aligned}
0 - y^\tau &= \int_{v^\tau(y^\tau)}^{\vartheta_1} \frac{\rd w}{\sqrt{\Phi(w) -2|\tau| + \tau^2}} \le \sqrt{2c_1}  \int_{v^\tau(y^\tau)}^{\vartheta_1} \frac{\rd w}{\sqrt{w} \sqrt{w - v^\tau(y^\tau)}}\\
&= \sqrt{2c_1}  \int_{1}^{\vartheta_1/v^\tau(y^\tau)} \frac{\rd \omega }{\sqrt{\omega} \sqrt{\omega-1}}
\\& \le \sqrt{2c_1}  \bigg( \int_1^2 \frac{\rd \omega }{\sqrt{\omega} \sqrt{\omega-1}}  +  \int_2^{\vartheta_1/v^\tau(y^\tau)}  \frac{\rd \omega }{\omega-1}\bigg)
\\& = \sqrt{2c_1} \bigg( \log(3+ 2\sqrt 2) + \log\bigg( \frac{\vartheta_1}{v^\tau(y^\tau)}\bigg)\bigg) \le \sqrt{2c_1}\bigg(2 + \log \frac{\vartheta_1}{\sqrt{2|\tau| /c_1}} \bigg).
\end{aligned}
\]
\end{proof}
In the following remark we introduce important one-dimensional super- and sub- solutions which will be used in the sequel.
\begin{rem}\label{rem:Choicettau} Lemma \ref{lem:1DSolution} gives a classification of solutions to \eqref{eq:1DEqn} in one dimension. The properties of such solutions are determined by their slopes  at infinity, 1, $1+t$ , or $1-|\tau|$,  where $t > 0$ and $\tau \in (-1,0)$ are parameters. As done in Lemma  \ref{lem:1DSolution} it is convenient to``center" these solutions so that their value at $x=0$ is $\vartheta_1\vep$.

In what follows, we will always take
\[
t = \varepsilon, \qquad \tau = -\varepsilon.
\]

Within this setting, we define
\[
w_\varepsilon^\varepsilon(x) :=
\begin{cases}
0 \quad &\text{ if } x \leq x_\vep^\vep \\
v_\varepsilon^\varepsilon(x)       \quad &\text{ if } x > x_\vep^\vep,
\end{cases}
\qquad
w_\varepsilon^{-\varepsilon}(x) :=
\begin{cases}
v_\varepsilon^{-\varepsilon}(x_\vep^{-\vep}) \quad &\text{ if } x \leq y_\vep^{-\vep} \\
v_\varepsilon^{-\varepsilon}(x)       \quad &\text{ if } x > y_\vep^{-\vep},
\end{cases}
\]
where $x_\vep^\vep$ and $y_\vep^{-\vep}$ are, respectively, the (unique) root of $v_\varepsilon^\varepsilon$ and the point of minimum of  $v_\varepsilon^{-\varepsilon}$.

It is immediate to see that $w_\vep^\vep$ and $w_\vep^{-\vep}$ are, respectively, a sub- and a super- solution of \eqref{eq:1DEqn}, both in the viscosity sense or in the weak sense.
\end{rem}
The next two lemmata are auxiliary results, which will be crucial in the proofs of our main theorems (see Section \ref{Sec:ImpFlat}).
In the statement of the next lemma we use the following standard notation ${\rm diam (X)} := \sup X-\inf X$ for subsets $X\subset \RR$.
\begin{lem}\label{lem:1DSolutionBoundFB} There exists $c > 1$ depending only on $\vartheta_1$, $\vartheta_2$ and $c_1 > 0$ as in \eqref{eq:AssumptionsPhi} such that 
\[
{\rm diam} \big(\{\vartheta_1\varepsilon \leq w_\varepsilon^\vep \leq \vartheta_2\varepsilon \big\}\big) \le c\vep, \qquad \forall \vep>0,
\] 
and 
\[
{\rm diam} \big(\{\vartheta_1\varepsilon \leq w_\varepsilon^{-\vep} \leq \vartheta_2\varepsilon \big\}\big) \le c\vep,\qquad \forall \vep \in {\textstyle  \Big(0,\frac{\vartheta_1^2}{8c_1}\Big)}.
\] 
\end{lem}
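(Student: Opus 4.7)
The plan is to compute both diameters explicitly using the first integral of the ODE \eqref{eq:1DEqn} established in the proof of Lemma \ref{lem:1DSolution}, and then bound the resulting elementary integrals from below on the relevant interval. In both cases the crucial quantitative input is the lower bound $\Phi(\vartheta_1)\ge \vartheta_1^2/c_1$, which follows by integrating the estimate $\tfrac12\Phi'(u)\ge u/c_1$ from \eqref{eq:AssumptionsPhi} on $[0,\vartheta_1]$.

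First I would treat $w_\vep^\vep$. Since $v_\vep^\vep$ is strictly increasing with $v_\vep^\vep(0)=\vartheta_1\vep$ (Lemma \ref{lem:1DSolution}(ii)), the set in question is an interval $[0,a_\vep]$ with $v_\vep^\vep(a_\vep)=\vartheta_2\vep$. The Hamiltonian identity \eqref{eq:ODEEnergyCons} applied with $A=1+\vep$ yields $(\dot v_\vep^\vep)^2=\Phi_\vep(v_\vep^\vep)+2\vep+\vep^2$, so
\[
a_\vep=\int_{\vartheta_1\vep}^{\vartheta_2\vep}\frac{\rd w}{\sqrt{\Phi_\vep(w)+2\vep+\vep^2}}\le \int_{\vartheta_1\vep}^{\vartheta_2\vep}\frac{\rd w}{\sqrt{\Phi(\vartheta_1)}}\le \frac{(\vartheta_2-\vartheta_1)\sqrt{c_1}}{\vartheta_1}\,\vep,
\]
using $\Phi_\vep(w)=\Phi(w/\vep)\ge \Phi(\vartheta_1)$ for $w\in[\vartheta_1\vep,\vartheta_2\vep]$.

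For $w_\vep^{-\vep}$, before writing down an analogous integral representation I would verify that the flat branch on $\{x\le y_\vep^{-\vep}\}$ contributes nothing. By \eqref{eq:1DEvenSolProp} the minimum value $v_\vep^{-\vep}(y_\vep^{-\vep})$ is bounded above by $\sqrt{2c_1\vep}\,\vep$, which is strictly smaller than $\vartheta_1\vep$ under the hypothesis $\vep<\vartheta_1^2/(8c_1)$; moreover $y_\vep^{-\vep}<0$ by \eqref{eq:1DEvenSolProp2}, while $v_\vep^{-\vep}(0)=\vartheta_1\vep$. Hence the sought set is exactly $[0,b_\vep]$, with $v_\vep^{-\vep}(b_\vep)=\vartheta_2\vep$. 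Applying \eqref{eq:ODEEnergyCons} with $A=1-\vep$ gives the first integral $(\dot v_\vep^{-\vep})^2=\Phi_\vep(v_\vep^{-\vep})-2\vep+\vep^2$, whence
\[
b_\vep=\int_{\vartheta_1\vep}^{\vartheta_2\vep}\frac{\rd w}{\sqrt{\Phi_\vep(w)-2\vep+\vep^2}}.
\]
Using $\Phi_\vep(w)\ge\vartheta_1^2/c_1$ together with $\vep\le\vartheta_1^2/(8c_1)$, the radicand is bounded below by $\vartheta_1^2/c_1-2\vep\ge 3\vartheta_1^2/(4c_1)$, and I conclude $b_\vep\le c\vep$.

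The only delicate point is the second case, where the energy constant $-2\vep+\vep^2$ in the radicand is \emph{negative} and could in principle destroy positivity of the integrand; securing a definite positive lower bound for $\Phi_\vep(w)-2\vep+\vep^2$ on the integration interval is precisely what forces the quantitative smallness $\vep<\vartheta_1^2/(8c_1)$. Once this is achieved, the remainder is routine bookkeeping with the first integrals already derived in Lemma \ref{lem:1DSolution}.
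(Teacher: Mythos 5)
Your proof is correct and takes essentially the same route as the paper: both arguments rest on the Hamiltonian identity \eqref{eq:ODEEnergyCons} together with the bound $\Phi(\vartheta_1)\ge \vartheta_1^2/c_1$ coming from \eqref{eq:AssumptionsPhi}, with the smallness $\vep<\vartheta_1^2/(8c_1)$ used precisely to keep the radicand positive for $w_\vep^{-\vep}$; writing the diameter as an explicit integral in $w$ instead of integrating the lower bound on $\dot w$ is only a cosmetic difference. One small remark: the fact $y_\vep^{-\vep}\le 0$ follows from the normalization implicit in Lemma \ref{lem:1DSolution}(iii) (the branch with $\dot v_\vep^{-\vep}(0)\ge 0$, as used in the paper's derivation of \eqref{eq:1DEvenSolProp2}), not from \eqref{eq:1DEvenSolProp2} itself, which only bounds $y_\vep^{-\vep}$ from below.
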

\begin{proof} By scaling,  we need to prove that  $w^\vep := w_1^\vep$ and $w^{-\vep} := w_1^{-\vep}$ satisfy 
\begin{itemize}
\item[(i)]  ${\rm diam} \big(\{\vartheta_1 \leq w^\vep \leq \vartheta_2 \big) \le c$;
\item[(ii)] ${\rm diam} \big(\{\vartheta_1 \leq w^{-\vep} \leq \vartheta_2 \big) \le c$.
\end{itemize}

\smallskip

To prove (i) wee notice that \eqref{eq:ODEEnergyCons} reads as $(\dot w^\vep)^2 = \Phi(w^\vep) + 2\vep + \vep^2$ in $\{w^\vep > 0\}$ and so, by \eqref{eq:AssumptionsPhi}, we find
\[
\tfrac{\vartheta_1^2}{c_1} \leq \Phi(\vartheta_1) \leq (\dot w^\vep)^2 \quad \text{ in } \{\vartheta_1 \leq w^\vep \leq \vartheta_2 \}.
\]
Integrating between $y$ and $x$, it follows
\[
w^\vep(x) - w^\vep(y) \geq \tfrac{\vartheta_1}{\sqrt{c_1}} (x-y).
\]
So, choosing $x$ such that $w^\vep(x) = \vartheta_2$, $y=0$ and recalling that $w^\vep(0) = \vartheta_1$, we find $\tfrac{\vartheta_1}{\sqrt{c_1}} x \leq \vartheta_2 - w^\vep(0) = \vartheta_2 -\vartheta_1$, and (i) is proved.

\smallskip

To prove (ii) we use again \eqref{eq:ODEEnergyCons}:  $(\dot w^{-\vep})^2 - \Phi(w^{-\vep}) =  - 2\vep + \vep^2$. Hence, for $\vep \in \Big(0,\frac{\vartheta_1^2}{8c_1}\Big)$, we find
\[
(\dot w^{-\vep})^2 \geq \Phi(w^{-\vep}) - 2\vep \geq \tfrac{\vartheta_1^2}{2c_1} - 2\vep \geq \tfrac{\vartheta_1^2}{4c_1}> 0, \quad \text{ in } \{\vartheta_1 \leq w^{-\vep} \leq \vartheta_2 \},
\]
which allows us to conclude similarly as for (i).
\end{proof}
\begin{lem}\label{lem:1DSolutionCompxwvep}
For every $\sigma \in (0,1)$, there exists $\vep_0 \in (0,1)$ depending only on $\vartheta_1$, $\vartheta_2$, $c_1 > 0$ in \eqref{eq:AssumptionsPhi} and $\sigma$, such that for every $\delta \in [0,1)$ and every $\varepsilon \in (0,\vep_0)$, if $w_\varepsilon^\varepsilon$ and
$w_\varepsilon^{-\varepsilon}$ are as in Remark \ref{rem:Choicettau}, then:

\smallskip

(i) If $x_\varepsilon^\vep$ is such that $v_\varepsilon^\varepsilon(x_\varepsilon^\vep) = 0$, then
\begin{equation}\label{eq:1DSolCompxwvep}
\begin{aligned}
w_\varepsilon^\varepsilon(x - \delta - \vep^\sigma) + \delta + \tfrac{1}{2} \vep^\sigma &\leq x, \quad \;  x  \in (x_\varepsilon^\vep + \delta + \vep^\sigma, 1) \\
w_\varepsilon^\varepsilon(x + \delta + \vep^\sigma) - \delta - \tfrac{1}{2}\vep^\sigma &\geq x, \quad \;  x  \in (-1,1). 
\end{aligned}
\end{equation}

(ii) If $y_\varepsilon^{-\vep}$ is the minimum point of $v_\varepsilon^{-\varepsilon}$, then
\begin{equation}\label{eq:1DSolCompxwvep1}
\begin{aligned}
w_\varepsilon^{-\varepsilon}(x - \dd - \vep^\sigma) + \dd + \tfrac{1}{2}\vep^\sigma &\leq x, \quad \;  x  \in (y_\varepsilon^{-\vep} + \dd + \vep^\sigma,1)  \\
w_\varepsilon^{-\varepsilon}(x + \delta + \vep^\sigma) - \delta - \tfrac{1}{2} \vep^\sigma &\geq x, \quad \;  x  \in (-1,1).
\end{aligned}
\end{equation}

\end{lem}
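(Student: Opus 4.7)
The key observation is that both $w_\vep^\vep$ and $w_\vep^{-\vep}$ are uniformly close to the linear function $y\mapsto y$ on compact sets, with error of order $O(\vep|\log\vep|)$ as $\vep\downarrow 0$. Since $\vep|\log\vep|=o(\vep^\sigma)$ for any $\sigma\in(0,1)$, the four flatness inequalities then follow by choosing $\vep_0$ small enough. To see the equivalence, substitute $y=x\mp(\dd+\vep^\sigma)$: each of the four inequalities reduces to checking $w_\vep^\vep(y)-y\le\tfrac12\vep^\sigma$ (resp.\ $\ge-\tfrac12\vep^\sigma$) on the relevant subset of $(-1,2)$, and analogously for $w_\vep^{-\vep}$. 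Hence it is enough to prove
\[
|w_\vep^\vep(y)-y|+|w_\vep^{-\vep}(y)-y|\le C\vep|\log\vep|\qquad\text{for }y\in(-1,2),
\]
with $C$ depending only on $\vartheta_1,\vartheta_2,c_1$.

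For $w_\vep^\vep$ I would proceed as follows. By Lemma \ref{lem:1DSolution}(ii), $v_\vep^\vep$ is convex with $\dot v_\vep^\vep(x)\to 1+\vep$ as $x\to+\infty$, so $\dot v_\vep^\vep\le 1+\vep$ everywhere. Since $v_\vep^\vep(x_\vep^\vep)=0$, integrating gives $v_\vep^\vep(y)\le(1+\vep)(y-x_\vep^\vep)$ for $y\ge x_\vep^\vep$, and together with \eqref{eq:1DSignChangeSolProp} (applied with $t=\vep$, yielding $|x_\vep^\vep|\le C\vep|\log\vep|$) this gives the upper estimate. For the lower estimate, let $z_\vep$ be the (unique) point where $v_\vep^\vep(z_\vep)=\vartheta_2\vep$: on $[z_\vep,+\infty)$ one has $\Phi'_\vep(v_\vep^\vep)=0$, hence $\ddot v_\vep^\vep\equiv 0$ and energy conservation \eqref{eq:ODEEnergyCons} forces $\dot v_\vep^\vep\equiv 1+\vep$, so $v_\vep^\vep$ is affine with slope $1+\vep$ there. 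By \eqref{eq:1DSignChangeSolProp} and Lemma \ref{lem:1DSolutionBoundFB} one gets $z_\vep\le|x_\vep^\vep|+c\vep\le C\vep|\log\vep|$. Split $(-1,2)$ into (a) $y\le x_\vep^\vep$ (where $w_\vep^\vep=0$ and $y\le 0$); (b) $x_\vep^\vep<y<z_\vep$ (where $0\le w_\vep^\vep\le\vartheta_2\vep$, so $w_\vep^\vep-y\ge -z_\vep$); and (c) $y\ge z_\vep$ (affine part, giving $v_\vep^\vep(y)-y=\vartheta_2\vep+\vep y-(1+\vep)z_\vep\ge -Cz_\vep$). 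In all three cases the lower estimate holds.

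The argument for $w_\vep^{-\vep}$ is structurally identical once $|y_\vep^{-\vep}|\le C\vep|\log\vep|$ is established. The lower bound $y_\vep^{-\vep}\ge -C\vep|\log\vep|$ is exactly \eqref{eq:1DEvenSolProp2} with $\tau=-\vep$. For the complementary upper bound, note that by symmetry of $v_\vep^{-\vep}$ about $y_\vep^{-\vep}$, the value $\vartheta_1\vep=v_\vep^{-\vep}(0)$ is also attained at $2y_\vep^{-\vep}$; the same integration used to prove \eqref{eq:1DEvenSolProp2}, now applied on the ascending branch from $y_\vep^{-\vep}$ to $\max(0,2y_\vep^{-\vep})$, yields $|y_\vep^{-\vep}-\max(0,2y_\vep^{-\vep})|\le C\vep|\log\vep|$, i.e.\ $|y_\vep^{-\vep}|\le C\vep|\log\vep|$. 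With this in hand, one uses convexity and $\dot v_\vep^{-\vep}\le 1-\vep$ for the upper estimate, and the same three-region splitting for the lower estimate (defining $z_\vep^{-\vep}$ as the ascending-branch point where $v_\vep^{-\vep}=\vartheta_2\vep$). The only new feature is that the left-constant piece of $w_\vep^{-\vep}$ takes the value $v_\vep^{-\vep}(y_\vep^{-\vep})\le\sqrt{2c_1\vep}\,\vep$ by \eqref{eq:1DEvenSolProp}, which is of order $\vep^{3/2}$ and hence harmless.

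The only genuine obstacle is controlling the sign and size of $y_\vep^{-\vep}$ for the supersolution (since $v_\vep^{-\vep}$ is not monotone and \eqref{eq:1DEvenSolProp2} only bounds it from one side); this is why the symmetry argument above is required. Everything else is a direct consequence of the convexity, monotonicity of $\dot v$, energy conservation, and the explicit estimates collected in Section \ref{Sec:ODEandBarriers}, and the choice of $\vep_0$ depending on $\sigma$ is driven solely by the condition $C\vep|\log\vep|\le\tfrac12\vep^\sigma$.
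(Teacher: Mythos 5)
Your argument is correct and follows essentially the same route as the paper's own proof: split the real line into the zero/constant piece, the transition layer $\{\vartheta_1\vep\le w\le\vartheta_2\vep\}$, and the affine tail, and invoke \eqref{eq:1DSignChangeSolProp}, \eqref{eq:1DEvenSolProp2} together with the diameter bound from Lemma \ref{lem:1DSolutionBoundFB}. The only cosmetic difference is that you first strip off $\delta$ via the substitution $y=x\mp(\delta+\vep^\sigma)$ and then prove a $\delta$-independent $O(\vep|\log\vep|)$ estimate, whereas the paper carries $\delta$ through the computation; the ingredients are identical.

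Two minor remarks. The headline inequality $|w_\vep^\vep(y)-y|+|w_\vep^{-\vep}(y)-y|\le C\vep|\log\vep|$ cannot hold unrestrictedly on $(-1,2)$: on the zero piece $w_\vep^\vep\equiv 0$, so $w_\vep^\vep(y)-y=-y$ is of order one near $y=-1$. Your subsequent case analysis handles this correctly, since the upper estimate is only ever invoked for $y\ge x_\vep^\vep$ (resp.\ $y\ge y_\vep^{-\vep}$), which is precisely the range entering the first inequality of each pair, but the stated intermediate claim should carry that restriction (and the range should be $(-1,3)$, since $\delta+\vep^\sigma$ can approach $2$). Also, the symmetry argument to bound $y_\vep^{-\vep}$ from above is superfluous: the normalization in the proof of Lemma \ref{lem:1DSolution} always selects the branch with $\dot v_\vep^{-\vep}(0)>0$, so by convexity $y_\vep^{-\vep}<0$ automatically, and \eqref{eq:1DEvenSolProp2} alone yields $|y_\vep^{-\vep}|\le C\vep|\log\vep|$.
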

\begin{proof} Let us prove part (i). To simplify the notations, we set $w^\varepsilon := w_\varepsilon^\varepsilon$ and $x_\vep = x_\vep^\vep$. Let $\tilde{x}_\varepsilon > 0 > x_\vep$ such that $w^\varepsilon (\tilde{x}_\varepsilon) = \vartheta_2\varepsilon$, hence $w^\vep$ is linear for $x\ge \tilde x_\vep $.  Then if $x \in (\tilde{x}_\varepsilon + \delta + \vep^\sigma,1)$, we have
\[
\begin{aligned}
w^\varepsilon(x - \delta - \vep^\sigma) - x &= \vartheta_2\varepsilon + (1+\varepsilon)(x - \delta - \vep^\sigma - \tilde{x}_\varepsilon) - x \\
&= (\vartheta_2 + x)\varepsilon - (1+\varepsilon)(\delta + \vep^\sigma) - (1+\vep)\tilde{x}_\varepsilon \\
& \leq (\vartheta_2 + 1)\vep - \vep^\sigma - \delta \leq -\delta - \tfrac{1}{2} \vep^\sigma,
\end{aligned}
\]
for every $\vep \leq \vep_0 \leq [2(\vth_2+1)]^{\frac{1}{\sigma-1}}$, while if $x  \in (x_\varepsilon + \delta + \vep^\sigma, \tilde{x}_\varepsilon + \delta + \vep^\sigma)$, we obtain by \eqref{eq:1DSignChangeSolProp} (with $t=\vep$) 
\[
\begin{aligned}
w^\varepsilon(x - \dd - \vep^\sigma) - x &\leq \vartheta_2\varepsilon - (x_\varepsilon + \dd + \vep^\sigma) \leq \vartheta_2 \vep  + C\vep |\log \vep| - \dd - \vep^\sigma \\
&\leq   - \dd -  \tfrac{1}{2} \vep^\sigma,
\end{aligned}
\]
taking eventually $\vep_0$ smaller. Notice that the constant $C > 0$ depends only on $\vth_1$, and $c_1$ (cf. \eqref{eq:1DSignChangeSolProp}).

To show the second inequality in \eqref{eq:1DSolCompxwvep}, we assume first $x + \dd + \vep^\sigma \geq \tilde{x}_\varepsilon$ and we notice that, since $\tilde{x}_\varepsilon \in (0,c\vep)$ (where $c > 0$ is as in Lemma \ref{lem:1DSolutionBoundFB}), we have
\[
\begin{aligned}
w^{\varepsilon}(x + \dd + \vep^\sigma) - x &= \vartheta_2\varepsilon + (1+\varepsilon)(x + \dd + \vep^\sigma - \tilde{x}_\varepsilon) - x \\
&\geq \dd + \vep^\sigma - \tilde{x}_\vep + \vep(x + \dd + \varepsilon^\sigma - \tilde{x}_\vep) \geq \dd + \vep^\sigma - c\vep \geq \dd + \tfrac{1}{2}\vep^\sigma,
\end{aligned}
\]
provided that $\vep_0$ is small enough. Further, since $\tilde{x}_\varepsilon \leq c\varepsilon$, when $x \leq \tilde{x}_\varepsilon - \delta - \vep^\sigma$ we have $x \leq 0$, and the second inequality in \eqref{eq:1DSolCompxwvep} follows.

To show (ii), we set $w^{-\vep} = w_\vep^{-\vep}$, $y_\varepsilon = y_\varepsilon^{-\vep}$, and we take $\tilde{y}_\vep$ such that $w^{-\varepsilon} (\tilde{y}_\varepsilon) = \vartheta_2\varepsilon$. The proof of the first inequality works exactly as before, using \eqref{eq:1DEvenSolProp2} instead of \eqref{eq:1DSignChangeSolProp}. To show the second, we assume first $x \in (\tilde{y}_\varepsilon - \delta - \vep^\sigma,1)$ and, recalling that $\tilde{y}_\varepsilon \leq c\varepsilon$, we write
\[
\begin{aligned}
w^{-\varepsilon}(x + \delta + \vep^\sigma) - x &= \vartheta_2\varepsilon + (1-\varepsilon)(x + \delta + \vep^\sigma - \tilde{y}_\varepsilon) - x \\
&= (\vartheta_2 - x)\varepsilon + (1 -\varepsilon)(\delta + \vep^\sigma) - (1-\vep)\tilde{y}_\varepsilon \\
& \geq  \delta + (1-\vep) \vep^\sigma - (1+c)\vep - \vep\dd  \geq \delta + \tfrac{1}{2} \vep^\sigma,
\end{aligned}
\]
taking eventually $\vep_0$ smaller. As above, if $x \leq \tilde{y}_\varepsilon - \delta - \vep^\sigma$, then $x$ is negative and the inequality is automatically satisfied.
\end{proof}
We end this section by proving that solutions $u_\vep$ to \eqref{eq:EulerEq} decay exponentially fast inside $\{u_\vep \leq \vth_1\vep\}$ as $\vep \to 0$. This is a main fact we will use later in Section \ref{Sec:ImpFlat} (see for instance Lemma \ref{lem:ImpFlat2tris}). This decay is obtained in Lemma \ref{lem:ExpDecayu} using a sliding type argument based on the continuous family of  super-solutions constructed in the following lemma.
\begin{lem}\label{lem:SuperSol}
Fix $c_1 > 0$ as in \eqref{eq:AssumptionsPhi} and $c^2 := \tfrac{1}{c_1}$. For every $\varepsilon \in (0,1)$, $\varrho > 0$ and $R \geq \varrho$, let
\begin{equation}\label{eq:Supersol0}
\varphi(r) = \varphi_{\varepsilon,\varrho,R}(r) := \, e^{-\frac{\mu_+}{\varepsilon}(R-r)} \, \frac{1 - \tfrac{\mu_+}{\mu_-} e^{-\frac{\mu_+ - \mu_-}{\varepsilon}(r-\varrho)}}{1 - \tfrac{\mu_+}{\mu_-} e^{-\frac{\mu_+ - \mu_-}{\varepsilon}(R-\varrho)}}, \qquad r \in [\varrho,R],
\end{equation}
where $\mu_\pm$ are defined by
\begin{equation}\label{eq:SuperSolLambdas}
\mu_{\pm} = - \tfrac{N-1}{2\varrho}\varepsilon \pm \sqrt{\big(\tfrac{N-1}{2\varrho}\big)^2\varepsilon^2 + c^2}.
\end{equation}
Then, for every $x_0 \in \mathbb{R}^N$ and $\varrho > 0$, the function
\begin{equation}\label{eq:SuperSol}
\psi(x) := \psi_{\varepsilon,\varrho,R,x_0}(x) :=
\begin{cases}
\varphi(\varrho)     \quad &\text{in } B_\varrho(x_0) \\
\varphi(|x - x_0|)   \quad &\text{in } B_R(x_0) \setminus B_\varrho(x_0)
\end{cases}
\end{equation}
satisfies
\begin{equation}\label{eq:SuperSolProb}
\begin{cases}
-\Delta \psi + \tfrac{1}{c_1\varepsilon^2} \psi \geq 0 \quad &\text{in } B_R(x_0) \\
\psi = 1                              \quad &\text{in } \partial B_R(x_0) \\
\partial_r \psi \geq 0                                      \quad &\text{in } B_R(x_0),
\end{cases}
\end{equation}
in the weak sense.
\end{lem}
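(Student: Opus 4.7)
The plan is to exploit the key algebraic fact that the exponents $\mu_\pm$ from \eqref{eq:SuperSolLambdas} are exactly the characteristic roots of the constant-coefficient ``frozen'' ODE
\[
-y''-\frac{N-1}{\varrho}\,y'+\frac{1}{c_1\varepsilon^2}\,y = 0 \qquad \text{on } \RR,
\]
obtained from the radial operator $-\partial_r^2-\frac{N-1}{r}\partial_r+\frac{1}{c_1\varepsilon^2}$ by freezing the drift coefficient $\frac{N-1}{r}$ at $r=\varrho$. Indeed, expanding the product in \eqref{eq:Supersol0} gives
\[
\varphi(r)=\frac{1}{D}\left[e^{-\mu_+(R-r)/\varepsilon}-e^{-\mu_+(R-\varrho)/\varepsilon+\mu_-(r-\varrho)/\varepsilon}\right], \qquad D:=1-\frac{\mu_+}{\mu_-}e^{-(\mu_+-\mu_-)(R-\varrho)/\varepsilon},
\]
which displays $\varphi$ as a linear combination of $e^{\mu_\pm r/\varepsilon}$ and hence as a solution of the frozen ODE on all of $\RR$. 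Since $\mu_+>0>\mu_-$ one has $-\mu_+/\mu_->0$ and $D>0$, so $\varphi$ is well defined, and the normalization in \eqref{eq:Supersol0} is tuned so that $\varphi(R)=1$ (immediate) and, crucially, $\varphi'(\varrho)=0$ (the two exponentials in the bracket coincide at $r=\varrho$).

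Next I would check monotonicity. Since $\psi\equiv\varphi(\varrho)$ on $B_\varrho(x_0)$, the property $\partial_r\psi\ge 0$ reduces to $\varphi'\ge 0$ on $[\varrho,R]$. Differentiating the rewriting gives
\[
\varphi'(r)=\frac{\mu_+}{D\varepsilon}\left[e^{-\mu_+(R-r)/\varepsilon}-e^{-\mu_+(R-\varrho)/\varepsilon+\mu_-(r-\varrho)/\varepsilon}\right],
\]
and comparing the two exponents reduces positivity to the elementary inequality $\mu_+(r-\varrho)\ge \mu_-(r-\varrho)$, which holds since $\mu_+>\mu_-$ and $r\ge\varrho$. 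The vanishing $\varphi'(\varrho)=0$ also matches the identically zero gradient of $\psi$ from the inner side, so $\psi$ is in fact $C^1$ across $\partial B_\varrho(x_0)$ and lies in $W^{1,\infty}(B_R(x_0))$. This matching is precisely what allows the weak differential inequality in \eqref{eq:SuperSolProb} to be verified by integrating by parts separately in $B_\varrho(x_0)$ and in the annulus $B_R(x_0)\setminus\overline{B_\varrho(x_0)}$, with no singular boundary contribution at $r=\varrho$.

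Finally I would verify the supersolution inequality pointwise on each region. Inside $B_\varrho(x_0)$, $\psi$ is the positive constant $\varphi(\varrho)$ (positivity follows from $1-\mu_+/\mu_->0$ and $D>0$), so the inequality reduces to $\varphi(\varrho)/(c_1\varepsilon^2)\ge 0$. In the annulus, the frozen ODE satisfied by $\varphi$ yields at once
\[
-\Delta\psi+\frac{1}{c_1\varepsilon^2}\psi = -\varphi''(r)-\frac{N-1}{r}\varphi'(r)+\frac{\varphi(r)}{c_1\varepsilon^2} = \Bigl(\frac{N-1}{\varrho}-\frac{N-1}{r}\Bigr)\varphi'(r)\ge 0,
\]
the last step using $r\ge\varrho$ and $\varphi'\ge 0$. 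I do not expect any analytic obstacle beyond bookkeeping; the only substantive choice is the ratio $-\mu_+/\mu_-$ in \eqref{eq:Supersol0}, which is engineered exactly so as to force $\varphi'(\varrho)=0$ and thereby make the $C^1$ gluing between the inner constant and the outer exponential profile possible.
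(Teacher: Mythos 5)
Your proof is correct and follows essentially the same route as the paper: both reduce to the radial variable, recognize that $\varphi$ solves the constant-coefficient ODE obtained by freezing the drift $\tfrac{N-1}{r}$ at $r=\varrho$ with $\varphi'(\varrho)=0$ and $\varphi(R)=1$, and then use $\varphi'\ge 0$ together with $r\ge \varrho$ to dominate the true radial operator (the paper constructs $\varphi$ from the ODE, you verify the given formula, which is the same argument). The only blemish is a transcription slip in your expansion of $\varphi$, whose second exponential should carry the factor $-\tfrac{\mu_+}{\mu_-}$; this is harmless, since your displayed formula for $\varphi'$ and every conclusion you draw from it are exactly the ones valid for the actual $\varphi$.
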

\begin{proof} Up to translations and scaling, we may assume $x_0 = 0$, $\varepsilon = 1$ and set $\varphi = \varphi_1$, $\psi = \psi_1$. Notice that if $\varrho = R$, we have $\psi \equiv 1$ in $B_R$ (i.e. $\varphi = 1$ in $(0,R)$) and \eqref{eq:SuperSolProb} is trivial.

If $0 < \varrho < R$, since $\varphi(\varrho) > 0$ and $\varphi(R) = 1$, it suffices to verify that the differential inequality in \eqref{eq:SuperSolProb} is satisfied in $B_R \setminus B_\varrho$ with $\varphi'(\varrho) = 0$ and $\varphi' \geq 0$ in $(\varrho,R)$.

To see this, we notice that if $r \in (\varrho,R)$ and $\varphi' \geq 0$, then
\[
-\Delta \varphi + c^2\varphi = -\varphi'' - \tfrac{N-1}{r}\varphi' + c^2\varphi \geq  -\varphi'' - \tfrac{N-1}{\varrho}\varphi' + c^2\varphi,
\]
and so, it is enough to check that
\[
\begin{cases}
-\varphi'' - \frac{N-1}{\varrho}\varphi' + c^2\varphi = 0 \quad &\text{ in } (\varrho,R) \\
\varphi' \geq 0 \quad &\text{ in } (\varrho,R) \\
\varphi'(\varrho) = 0.
\end{cases}
\]
Integrating the equation above, we easily see that
\[
\varphi(r) = A e^{\mu_+ r} + Be^{\mu_- r} \quad r \in (R/2,R),
\]
for some suitable constants $A,B \in \mathbb{R}$, and $\mu_{\pm}$ as in \eqref{eq:SuperSolLambdas}. Imposing that $\varphi'(\varrho) = 0$ and $\varphi(R) = 1$, we deduce
\[
A = \frac{1}{e^{\mu_+R}( 1 - \tfrac{\mu_+}{\mu_-} e^{-(\mu_+ - \mu_-)(R-\varrho)})}, \qquad B = - \tfrac{\mu_+}{\mu_-} e^{(\mu_+ - \mu_-)\varrho} A
\]
and, substituting into the expression of $\varphi$, \eqref{eq:Supersol0} follows. Checking that $\varphi' \geq 0$ in $(\varrho,R)$ is a straightforward computation.
\end{proof}
\begin{lem}\label{lem:ExpDecayu}
There exists $\varepsilon_0 \in (0,1)$ depending only on $N$ and $c_1$ such that for every $\varepsilon \in (0,\varepsilon_0)$, every solution $u_\varepsilon$ to \eqref{eq:EulerEq}, every $x_0 \in \{u_\varepsilon \leq \vartheta_1\varepsilon\}$ and every ball $B_{\varepsilon^{3/4}}(x_0) \subset \{u \leq \vartheta_1\varepsilon\}$, then
\begin{equation}\label{eq:ExpDecayu}
u_\varepsilon \leq 3\vartheta_1 \varepsilon \, e^{-\frac{\vep^{-1/4}}{4 c_1^{1/2}} } \quad \text{ in } B_{\frac{\varepsilon^{3/4}}2}(x_0).
\end{equation}
\end{lem}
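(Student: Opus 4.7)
The plan is to compare $u_\varepsilon$ to the explicit radial supersolution $\psi = \psi_{\varepsilon,\varrho,R,x_0}$ constructed in Lemma \ref{lem:SuperSol}, with the choice $R := \varepsilon^{3/4}$ and $\varrho := R/2 = \varepsilon^{3/4}/2$, and then to exploit the exponential decay of $\psi$ from its boundary value $1$ on $\partial B_R(x_0)$ down to a small value on $\overline{B_\varrho(x_0)}$.

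The first step is to linearize the equation inside $B_R(x_0)$. By hypothesis $B_R(x_0) \subset \{u_\varepsilon \leq \vartheta_1 \varepsilon\}$, so $u_\varepsilon/\varepsilon \in [0,\vartheta_1]$ there and \eqref{eq:AssumptionsPhi} gives $\tfrac{1}{2}\Phi'(u_\varepsilon/\varepsilon) \geq \tfrac{1}{c_1}(u_\varepsilon/\varepsilon)$. Using $\Phi_\varepsilon'(t) = \tfrac{1}{\varepsilon}\Phi'(t/\varepsilon)$ and \eqref{eq:EulerEq}, this rewrites as $\Delta u_\varepsilon \geq \tfrac{1}{c_1\varepsilon^2}\, u_\varepsilon$, i.e. $u_\varepsilon$ is a smooth subsolution of the linear operator $-\Delta + \tfrac{1}{c_1\varepsilon^2}$ for which $\psi$ is a weak supersolution by Lemma \ref{lem:SuperSol}. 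Since $\psi\equiv 1$ on $\partial B_R(x_0)$ whereas $u_\varepsilon \leq \vartheta_1\varepsilon$ there, the weak maximum principle applied to $u_\varepsilon - \vartheta_1\varepsilon\,\psi$ yields $u_\varepsilon \leq \vartheta_1\varepsilon\,\psi$ throughout $B_R(x_0)$.

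The second step is to bound $\psi$ on $B_{R/2}(x_0) = B_\varrho(x_0)$, where by construction $\psi \equiv \varphi(\varrho)$. Evaluating \eqref{eq:Supersol0} at $r = \varrho$ gives $\varphi(\varrho) = e^{-\mu_+(R-\varrho)/\varepsilon}\cdot\frac{1 - \mu_+/\mu_-}{1 - (\mu_+/\mu_-)\,e^{-(\mu_+-\mu_-)(R-\varrho)/\varepsilon}}$. Writing $a := \tfrac{(N-1)\varepsilon}{2\varrho} = (N-1)\varepsilon^{1/4}$ and $c := 1/\sqrt{c_1}$, the definitions \eqref{eq:SuperSolLambdas} give $\mu_+ = -a + \sqrt{a^2 + c^2} < c$ and $|\mu_-| = a + \sqrt{a^2+c^2} > c$, so $\mu_+/\mu_- \in (-1,0)$. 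Hence the denominator is $>1$ while the numerator is $<2$, and $\varphi(\varrho) \leq 2\,e^{-\mu_+(R-\varrho)/\varepsilon}$.

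The third and final step quantifies $\mu_+$ as $\varepsilon \to 0$. Since $a = (N-1)\varepsilon^{1/4} \to 0$, one has $\mu_+ \to c$, and an elementary calculation shows $\mu_+ \geq c/2$ as soon as $a \leq 3c/4$, i.e. for $\varepsilon \leq \varepsilon_0(N,c_1)$ small enough. Combined with $R-\varrho = \varepsilon^{3/4}/2$, this gives $\mu_+(R-\varrho)/\varepsilon \geq \tfrac{c}{2}\cdot \tfrac{\varepsilon^{-1/4}}{2} = \tfrac{\varepsilon^{-1/4}}{4\,c_1^{1/2}}$, and therefore $u_\varepsilon \leq 2\vartheta_1\varepsilon\,e^{-\varepsilon^{-1/4}/(4c_1^{1/2})} \leq 3\vartheta_1\varepsilon\,e^{-\varepsilon^{-1/4}/(4c_1^{1/2})}$ throughout $B_{\varepsilon^{3/4}/2}(x_0)$, which is exactly \eqref{eq:ExpDecayu}. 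The only delicate point is the balancing choice $\varrho = R/2$ (together with $R = \varepsilon^{3/4}$), which is what makes the constant $\tfrac{1}{4\,c_1^{1/2}}$ come out cleanly; no further technical obstacle is anticipated.
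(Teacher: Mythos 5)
Your proof is correct and reaches the same conclusion using the same explicit barrier from Lemma~\ref{lem:SuperSol}, but it replaces the paper's sliding argument with a single direct comparison. The paper starts from the constant barrier $\varrho = R$, invokes the strong maximum principle, defines $\varrho_* := \inf\{\varrho : \psi_\varrho > u_\varepsilon \text{ in } B_R\}$, and rules out $\varrho_* > 0$ by a touching-point contradiction; you instead observe that for any fixed $\varrho$ the difference $v := \vartheta_1\varepsilon\,\psi_\varrho - u_\varepsilon$ is a weak supersolution of $-\Delta + \tfrac{1}{c_1\varepsilon^2}$ in $B_R(x_0)$ with $v \geq 0$ on $\partial B_R(x_0)$, and the (zeroth-order-coercive) weak maximum principle---test with $v^-$ and integrate---immediately gives $v \geq 0$, i.e., $u_\varepsilon \leq \vartheta_1\varepsilon\,\psi_\varrho$ in $B_R(x_0)$. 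Since this is all that is ultimately needed (the sliding in the paper serves only to reach the same non-strict inequality), your route is arguably cleaner; your quantitative Steps 2--3 match the paper's computation, and your slightly sharper constant $2\vartheta_1\varepsilon$ (versus the paper's $3\vartheta_1\varepsilon$, obtained from the cruder bound $-\mu_+/\mu_- \leq 2$ rather than your correct observation $-\mu_+/\mu_- < 1$) is of course still $\leq 3\vartheta_1\varepsilon$ as claimed.
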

\begin{proof} Fix $R > 0$ and $x_0 \in \{u \leq \vartheta_1\varepsilon\}$ such that $B_R(x_0) \subset \{u \leq \vartheta_1\varepsilon\}$. Let $\psi_\varrho :=   \psi_{\varepsilon,\varrho,R,x_0}$ be defined as in \eqref{eq:SuperSol}, satisfying \eqref{eq:SuperSolProb}, and let $\tilde \psi_\varrho : =  \vartheta_1\vep \psi_\varrho$.

If $\varrho = R$, then $\tilde\psi_R = \vth_1\vep$ satisfies \eqref{eq:SuperSolProb}, with $\tilde\psi_R \geq u_\vep$ in $B_R(x_0)$. Setting $v := \tilde\psi_R - u_\vep$ and recalling that $B_R(x_0) \subset \{u \leq \vartheta_1\varepsilon\}$, we obtain
\[
-\Delta v + \tfrac{1}{c_1\varepsilon^2}v = -\Delta \tilde \psi_R + \tfrac{1}{c_1\varepsilon^2} \tilde\psi_R + \Delta u - \tfrac{1}{c_1\varepsilon^2} u \geq \Delta u - \tfrac{1}{2}\Phi_\varepsilon'(u) = 0,
\]
and thus
\[
\begin{cases}
-\Delta v + \tfrac{1}{c_1\varepsilon^2} v \geq 0 \quad &\text{in } B_R(x_0) \\
v \geq 0  \quad &\text{in } B_R(x_0).
\end{cases}
\]
By the strong maximum principle, $v > 0$ in $B_R(x_0)$ (it cannot be $v=0$ since $\psi_R$ is a strict super-solution), that is $\psi_R > u_\vep$ in $B_R(x_0)$. Now, let
\[
\varrho_\ast := \inf \{\varrho \in (0,R]: \psi_\varrho > u_\vep \text{ in } B_R(x_0) \}.
\]
We have $\varrho_\ast = 0$. If by contradiction, $\varrho_\ast > 0$, we may repeat the above argument setting $v:= \tilde\psi_{\varrho_\ast} - u_\vep$ and noticing that $v \geq 0$ in $B_R(x_0)$ with $v(x_\ast) = 0$, for some $x_\ast \in \overline{B}_R(x_0)$. Since  by construction $B_R(x_0) \subset \subset\{u \leq \vartheta_1\varepsilon\}$, $\tilde\psi_{\varrho_\ast} = \vartheta_1\vep$  on $\partial B_R(x_0)$, and  $\psi_{\varrho_\ast}$ is radially increasing near the boundary of the ball, it must be $x_\ast \in B_R(x_0)$. Thus using the linear equation for $v$ and the strong maximum principle either $v \equiv 0$ or $v > 0$ in $B_R(x_0)$. Since both scenarios are impossible, our contradiction follows.

In particular, we have $\varrho_\ast < \tfrac{R}{2}$ and so, $u_\vep \leq \psi_{R/2}$ in $B_R(x_0)$. Now, choosing $R = \vep^{3/4}$, taking $\varrho = \tfrac{R}{2}$ in \eqref{eq:Supersol0} and using \eqref{eq:SuperSol}, we obtain
\[
u_\vep \leq \vep \vartheta_1\varphi_{\vep^{3/4}/2}(\vep^{3/4}/2) \leq \vep\vartheta_1 \big( 1 - \tfrac{\mu_+}{\mu_-} \big) \, e^{-\frac{\mu_+}{2 \sqrt[4]{\varepsilon}}} \quad \text{ in } B_{\vep^{3/4}/2}(x_0),
\]
where $\mu_{\pm}$ are defined in \eqref{eq:SuperSolLambdas} (with $R = \varepsilon^{3/4}$). Since $\mu_{\pm} \to \pm \tfrac{1}{\sqrt{c_1}}$ as $\varepsilon \to 0^+$, there is $\varepsilon_0 \in (0,1)$ (depending only on $N$ and $c_1$) such that $\mu_+ \geq 1/(2\sqrt{c_1})$ and $-\mu_+/\mu_- \leq 2$ for every $\varepsilon \in (0,\varepsilon_0)$ and thus \eqref{eq:ExpDecayu} follows.
\end{proof}

%
%%%%%%%%%%%%%%%%%%%%%%%%%%%%%%%%%%%%%%%%%%%%%%%%%%%%%%%%%%%%%%%%%%%%%%%%%%%%%%%%%%%%%%%%%%%%%%%%%%%%%%%%%%%%%

%%%%%%%%%%%%%%%%%%%%%%%%%%%%%%%%%%%%%%%%%%%%%%%%%%%%%%%%%%%%%%%%%%%%%%%%%%%%%%%%%%%%%%%%%%%%%%%%%%%%%%%%%%%%%
%
%
%
%
\section{Lipschitz and non-degeneracy estimates}\label{Sec:NonDeg}

We recall now a useful  Lipchitz estimate from  \cite{CafSal2005:book}.
\begin{prop} [{Uniform Lipschitz estimate; see \cite[Theorem 1.2]{CafSal2005:book}}]\label{cor:LipEst}
For any $V \suubset B_1$, there exists $\overline{C} > 0$ depending only on $N$, $L$, $\vth_2$ and $V$ such that for every $\varepsilon \in (0,1)$ and for every critical point $u_\vep$ of \eqref{eq:Energy} in $B_1$ with  $u_\vep (0)\le \vartheta_2\vep$ we have
\begin{equation}\label{eq:LipEst2}
\sup_V |\nabla u_\vep| \leq \overline{C}.
\end{equation}
\end{prop}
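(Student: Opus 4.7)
Since this result is already established in \cite[Theorem 1.2]{CafSal2005:book} (for minimizers; the same proof applies to critical points, which satisfy the semilinear Euler--Lagrange equation \eqref{eq:EulerEq}), my plan is to recall the structure of the argument rather than reprove it. The proof naturally splits into two parts: an a priori $L^\infty$ bound, followed by a Lipschitz-type gradient estimate.

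The first step is to show $\sup_V u_\vep \leq C$ uniformly in $\vep \in (0,1)$, with $C$ depending only on $N$, $\vth_2$ and $\operatorname{dist}(V,\partial B_1)$. The key inputs are that $u_\vep$ is subharmonic (since $\Delta u_\vep = \tfrac{1}{2\vep}\beta(u_\vep/\vep) \geq 0$), is harmonic on $\{u_\vep > \vth_2\vep\}$, and satisfies the smallness condition $u_\vep(0) \leq \vth_2\vep$. Comparison of $u_\vep$ with harmonic extensions on concentric balls, combined with an iterated Harnack-type argument exploiting the smallness at $0$, then propagates this information to bound $u_\vep$ on $V$. The second step is a gradient bound by contradiction and blow-up. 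Suppose the estimate fails, so that $M_k := |\nabla u_{\vep_k}(x_k)| \to \infty$ for some sequences $\vep_k \in (0,1)$, critical points $u_k$, and $x_k \in V$. A doubling/point-selection lemma in the spirit of De Giorgi--Schoen--Simon lets one assume the $x_k$ are ``optimal bad points'' on suitable scales $r_k$. Rescale $v_k(y) := r_k^{-1} u_k(x_k + r_k y)$, producing critical points of $\EE_{\tilde\vep_k}$ on balls of large normalised radius with $\tilde\vep_k := \vep_k / r_k$ and $L^\infty$-norm controlled by Step~1. One then splits into regimes: if $\tilde\vep_k$ stays bounded, interior Schauder estimates for $\Delta v_k = \tfrac{1}{2\tilde\vep_k}\beta(v_k/\tilde\vep_k)$ (with $|\beta| \leq L$) yield a uniform gradient bound for $v_k$, contradicting the blow-up; if $\tilde\vep_k \to 0$, the compactness result \cite[Theorem 1.15]{CafSal2005:book} gives a limit which is a minimizer of $\EE_0$ in $\RR^N$, and the classical Alt--Caffarelli Lipschitz estimate for one-phase minimizers supplies the contradiction.

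The main obstacle is the first step: the uniform $L^\infty$ bound for critical points (not just minimizers) cannot be obtained by the energy comparison available in the minimizing case. In the Caffarelli--Salsa argument this is handled via an ACF-type monotonicity formula. In the present setting one could alternatively use the one-dimensional super-solution $w_\vep^{-\vep}$ from Remark \ref{rem:Choicettau}, suitably translated and dilated, as a radial barrier dominating $u_\vep$ from above; the smallness at the origin $u_\vep(0) \le \vth_2 \vep$ combined with the sub-/super- solution comparison then gives the required $L^\infty$ bound with the stated dependence of constants.
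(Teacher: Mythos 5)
The paper does not include a proof of this proposition: it is taken verbatim (up to notation) from \cite[Theorem~1.2]{CafSal2005:book}, so there is no argument in the paper to compare against. Your proposal correctly identifies this and offers a sketch of the cited result; that is the right instinct. A few of the details you propose, however, would not survive scrutiny.

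First, in the blow-up dichotomy of your Step~2, the case $\tilde\vep_k\to0$ is handled by invoking the compactness theorem \cite[Theorem~1.15]{CafSal2005:book} and then the Alt--Caffarelli Lipschitz estimate for one-phase \emph{minimizers}. Both of these are specific to minimizers, while the proposition is stated for arbitrary critical points of $\EE_\vep$. Since nothing forces a blow-up limit of critical points to be a minimizer of $\EE_0$, this step is a genuine gap; the cited Caffarelli--Salsa argument does not proceed by blow-up to a one-phase minimizer but is a self-contained PDE estimate (valid for solutions of $\Delta u_\vep=\beta_\vep(u_\vep)$, not only energy minimizers, so your parenthetical ``for minimizers; the same proof applies to critical points'' sells the reference short). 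Second, the barrier you suggest as an alternative --- $w_\vep^{-\vep}$ ``translated and dilated'' --- cannot be used as a radial supersolution dominating $u_\vep$: the semilinear equation \eqref{eq:EulerEq} is not scale-invariant, so dilations of $1$D solutions are no longer (super-)solutions; and a radial profile $\psi(x)=w_\vep^{-\vep}(|x|-a)$ built from the monotone \emph{increasing} $1$D supersolution $w_\vep^{-\vep}$ satisfies $\Delta\psi=\ddot w_\vep^{-\vep}+\tfrac{N-1}{r}\dot w_\vep^{-\vep}\ge\ddot w_\vep^{-\vep}$, i.e. the extra nonnegative lower-order term pushes in the wrong direction and destroys the supersolution inequality $\Delta\psi\le\tfrac12\Phi_\vep'(\psi)$. (Contrast this with Lemma~\ref{lem:SuperSol}, where the supersolution is for the \emph{linearized} operator $-\Delta+\tfrac1{c_1\vep^2}$, not for the nonlinear equation.) These issues do not affect the paper, which simply relies on the reference, but they are real gaps in the proposed self-contained argument.
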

We also  need a non-degeneracy estimate related to \cite[Theorem 1.8]{CafSal2005:book}.  Our estimate is stronger since  balls $B_r(z)$ do not need to be centered at some point in $\{u\ge C\vep\}$, with $C$ large, and can be centered at any point in $ \{u_\varepsilon \geq \vartheta_1\varepsilon\}$
\begin{lem} [Uniform non-degeneracy] \label{lem:NonDegEst}
There exists $\varepsilon_0 \in (0,1)$ depending only on $\vartheta_1$ and $c_1$ such that for every $\kappa > 0$, there exists $c_\kappa > 0$ depending only on $N$, $L$, $\vth_2$ and $\kappa$ such that for every $\varepsilon \in (0,\varepsilon_0)$, every local minimizer $u_\vep$ of \eqref{eq:Energy} in $B_1$, every $z \in \{u_\varepsilon \geq \vartheta_1\varepsilon\}$ and every $r \geq \kappa\vep$ such that $B_r(z) \suubset B_1$, then
\begin{equation}\label{eq:NonDegEst1}
\sup_{B_r(z)} u_\vep \geq c_\kappa \, r.
\end{equation}
\end{lem}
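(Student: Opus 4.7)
I would extend the classical Caffarelli--Salsa non-degeneracy \cite[Theorem~1.8]{CafSal2005:book}, which requires the stronger hypothesis $u_\vep(z)\geq C_\ast\vep$ for some large $C_\ast=C_\ast(N,\Phi)$, down to the threshold $u_\vep(z)\geq \vth_1\vep$ by splitting the argument according to the size of $r/\vep$.

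\emph{Small scale.}  Write $M:=\sup_{B_r(z)}u_\vep$ and assume $M\leq r$ (else the conclusion is trivial for $c_\kappa\leq 1$).  By Proposition~\ref{cor:LipEst} and $u_\vep(z)\geq \vth_1\vep$, we have $u_\vep\geq \vth_1\vep/2$ on $B_{c_0\vep}(z)$ with $c_0:=\vth_1/(2\overline{C})$; since $\Phi(\vth_1/2)>0$ by \eqref{eq:AssumptionsPhi},
\[
\int_{B_{c_0\vep}(z)}\Phi_\vep(u_\vep)\,\rd x \;\geq\; c_1\,\vep^N,\qquad c_1=c_1(N,\Phi,\overline{C})>0.
\]
I would then test the minimality of $u_\vep$ against the competitor $v:=\psi u_\vep$, where $\psi\in C^{0,1}(\RR^N)$ satisfies $\psi=1$ outside $B_{r/2}(z)$, $\psi=0$ on $B_{r/4}(z)$, $0\leq\psi\leq 1$, $|\nabla\psi|\leq 8/r$ (assuming $r\geq 4c_0\vep$, otherwise the range $r\sim\vep$ is handled below). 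Monotonicity of $\Phi_\vep$ together with minimality yield
\[
c_1\vep^N \;\leq\; \int_{B_{r/2}(z)}\!\bigl(|\nabla v|^2-|\nabla u_\vep|^2\bigr)\,\rd x \;\leq\; C\,M r^{N-1},
\]
where the right-hand bound uses $|\nabla u_\vep|\leq \overline{C}$, $|\nabla \psi|\leq 8/r$ and $M\leq r$ (to absorb the quadratic term $M^2 r^{N-2}$). Rearranging gives the weak non-degeneracy
\[
M \;\geq\; c_2\,\vep^N/r^{N-1},\qquad c_2=c_2(N,\Phi,\overline{C})>0.
\]

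\emph{Large scale and conclusion.}  For $r\geq K\vep$ with $K$ to be fixed, I would apply the weak estimate at the smaller radius $K\vep$ to produce a point $z'\in B_{K\vep}(z)$ with $u_\vep(z')\geq c_2\vep/K^{N-1}$.  Choosing $K$ so that $c_2/K^{N-1}\geq C_\ast$ makes $z'$ admissible for \cite[Theorem~1.8]{CafSal2005:book}; applied at $z'$ on $B_{r/2}(z')\subset B_r(z)$---the inclusion holding as soon as $r\geq 2K\vep$, which can be secured by enlarging $\kappa$ if $\kappa<2K$---it yields $\sup_{B_r(z)}u_\vep\geq c_\ast r/2$.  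For the complementary range $r\in[\kappa\vep,\,2K\vep]$ (which covers also small $\kappa$), the weak estimate alone gives $M\geq (c_2/(2K)^N)\,r$.  Taking $c_\kappa:=\min\{c_\ast/2,\,c_2/(2K)^N\}$ concludes the proof.

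\emph{Main obstacle.} The delicate step is the calibration of $K$: it must be large enough for $c_2/K^{N-1}\geq C_\ast$, so that the nearby point $z'$ meets the Caffarelli--Salsa threshold. Should the weak constant $c_2$ obtained from the one-shot competitor argument be insufficient for this, the fallback is a short bootstrap based on the pointwise lower bound $\Delta u_\vep\geq c/\vep$ on the reaction strip $\{\vth_1\vep\leq u_\vep\leq \vth_2\vep\}$ (valid by \eqref{eq:AssumptionsPhi} since $\beta\geq c_\beta>0$ on $[\vth_1,\vth_2]$): a quadratic subsolution barrier combined with the strong maximum principle (argued on the connected component of $\{u_\vep>\vth_1\vep\}$ through $z$) lifts $u_\vep$ by a fixed amount $c'\vep$ at a distance $O(\vep)$, and finitely many iterations take $u_\vep$ from $\vth_1\vep$ up to $C_\ast\vep$.
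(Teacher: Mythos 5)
Your first-stage energy estimates are in the right spirit (the paper's own Step 1 also tests minimality against a cut-off competitor to control $\int\Phi_\vep(u_\vep)$), but the step that actually carries the lemma --- passing from the weak threshold $u_\vep(z)\ge\vth_1\vep$ to a nearby point above the Caffarelli--Salsa threshold $C_\ast\vep$ --- is exactly where your argument breaks down, as you yourself flag. The one-shot competitor bound gives $\sup_{B_{K\vep}(z)}u_\vep\ge c_2\vep/K^{N-1}$ with $c_2$ a \emph{small} constant (it is proportional to $\Phi(\vth_1/2)\,c_0^{N}$ with $c_0\sim\vth_1/\overline C$, divided by constants involving $\overline C$), and the bound \emph{deteriorates} as $K$ grows; since the construction forces $K\gtrsim c_0$, the best it yields is a small multiple of $\vep$, in fact of the same order as (or smaller than) the hypothesis $\vth_1\vep$ you started from. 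So the calibration $c_2/K^{N-1}\ge C_\ast$ cannot be achieved, and the reduction to \cite[Theorem 1.8]{CafSal2005:book} does not go through.

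The fallback bootstrap does not repair this, because its premise is false under the paper's hypotheses: on the strip $\{\vth_1\vep\le u_\vep\le\vth_2\vep\}$ one only knows $\Delta u_\vep=\tfrac12\Phi_\vep'(u_\vep)\ge0$; the bound $\beta\ge c_\beta>0$ on $[\vth_1,\vth_2]$ fails since $\Phi\equiv1$ on $[\vth_2,\infty)$ forces $\beta(\vth_2)=0$, and \eqref{PHI222}--\eqref{eq:AssumptionsPhi} allow $\beta$ to vanish on subintervals of $(\vth_1,\vth_2)$. Moreover, even with such a bound the barrier could at best lift $u_\vep$ up to $\vth_2\vep$: in $\{u_\vep>\vth_2\vep\}$ the function is merely harmonic, and harmonic functions have no non-degeneracy, so reaching $C_\ast\vep$ requires using minimality again --- which is the original problem. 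The paper avoids this circularity entirely: it sets $\omega(r):=r^{-1}\sup_{B_r(z)}u_\vep$, combines a Caccioppoli inequality (testing with $u\varphi^2$) and a cut-off competitor to get $\int_{B_{3/4}}\Phi_\vep(u)\lesssim\omega(1)^2$, then uses subharmonicity plus a Chebyshev argument with the quadratic lower bound $\Phi_\vep(t)\ge\tfrac1{2c_1}(t/\vep)^2$ on $[0,\vth_1\vep]$ to obtain $\omega(\tfrac12)\le c\,\max\{\vep,\omega(1)\}^{1+2\sigma}$, and iterates this decay down to scale $\approx\sqrt\vep$, where the superlinear decay contradicts $u_\vep(z)\ge\vth_1\vep$. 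You would need an analogous quantitative mechanism (or some genuinely new idea) in place of the calibration/bootstrap; as written, the proposal has a gap at its central step. A minor additional point: Proposition \ref{cor:LipEst} is stated for critical points with $u_\vep(0)\le\vth_2\vep$, so invoking $|\nabla u_\vep|\le\overline C$ near an arbitrary $z\in\{u_\vep\ge\vth_1\vep\}$ (and on the annulus $B_{r/2}\setminus B_{r/4}$) requires a case distinction or a separate justification.
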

\begin{proof}  Let us fix $\kappa > 0$, and assume that $\varepsilon \in (0,\varepsilon_0)$, $u = u_\varepsilon$, $z \in \{u_\varepsilon \geq \vartheta_1\varepsilon\}$ and $r \geq \kappa \varepsilon$. 
Define
\begin{equation}\label{eq:NonDegDelta}
\omega(r): = \tfrac{1}{r} \sup_{B_r(z)} u.
\end{equation}
Our goal is to prove a lower bound for $\omega$, which holds if $\vep_0$ is small enough.
Up to translate and scaling, we may assume $r = 1$ and $z = 0$. Let $\sigma := \tfrac{1}{3}$ where $c > 0$ is the constant appearing in \eqref{eq:NonDegDecayIneq} depending only on $N$ and $c_1$.

\smallskip

\emph{Step 1: Estimates.} Let $\varphi \in C_0^\infty(B_1)$, $0 \leq \varphi \leq 1$, with $\varphi = 1$ in $B_{7/8}$. Assume also
\begin{equation}\label{eq:NonDegPropTest}
|\nabla \varphi| \leq c_N, \qquad  |\Delta \varphi| \leq c_N,
\end{equation}
for some $c_N > 0$. Testing the equation of $u$ with $\eta = u \varphi^2$, it is not difficult to find
\[
\int_{B_1} \left[ |\nabla u|^2 + \tfrac{1}{2} \Phi_\vep'(u)u \right] \varphi^2 \dx = \tfrac{1}{2} \int_{B_1} u^2 \Delta (\varphi^2) \dx,
\]
which, since $\Phi_\varepsilon'(u)u \geq 0$ implies
\begin{equation}\label{eq:NonDegCacc}
\int_{B_{7/8}}  |\nabla u|^2 \dx \leq c_N \int_{B_1} u^2 \dx,
\end{equation}
for some new $c_N > 0$.

Now, let $\phi \in C^\infty(\RR^N)$, $0 \leq \phi \leq 1$ with $\phi = 0$ in $B_{3/4}$ and $\phi = 1$ in $\RR^N \setminus B_{7/8}$, satisfying \eqref{eq:NonDegPropTest}. Taking $v = \phi u$ as a competitor for $u$, we deduce
\[
\begin{aligned}
\int_{B_1} \Phi_\vep(u) - \Phi_\vep(\phi u) \dx &\leq \int_{B_1}  |\nabla (u\phi)|^2 - |\nabla u|^2 \dx
\\
 &\leq \int_{B_1} \left( \phi^2 -1 \right) |\nabla u|^2\dx + 2 \int_{B_{7/8}} u^2|\nabla \phi|^2\dx +  \int_{B_{7/8}} |\nabla u|^2 \phi^2\dx \\
&\leq c_N \int_{B_{7/8}} |\nabla u|^2  + u^2 \dx,
\end{aligned}
\]
for some new $c_N > 0$ and so, recalling that $\phi \leq 1$, $\Phi_\vep' \geq 0$ and using \eqref{eq:NonDegCacc}, it follows
\[
\int_{B_{3/4}} \Phi_\vep(u) \dx \leq \int_{B_1} \Phi_\vep(u) - \Phi_\vep(\phi u) \dx \leq c_N \int_{B_1} u^2 \dx.
\]
In particular, by the definition of $\omega$, we conclude
\begin{equation}\label{eq:NonDegFunEst}
\int_{B_{3/4}} \Phi_\vep(u) \dx \leq c_N \omega(1)^2,
\end{equation}
for some new $c_N > 0$.

\smallskip

\emph{Step 2: Decay of $\omega$.} Note that for all  $y \in B_{1/2}$, since $u$ is subharmonic, we have
\[
u(y) \leq \fint_{B_{1/4}(y)} u \dx \leq c_N \int_{B_{3/4}} u \dx = c_N \left( \int_{B_{3/4}\cap\{u \geq t\}} u \dx +  \int_{B_{3/4}\cap\{u \leq t\}} u \dx \right),
\]
for every $t > 0$. Recalling that $\Phi$ is nondecreasing, there holds $\{u \geq t\} \subseteq \{\Phi_\varepsilon(u) \geq \Phi_\varepsilon(t)\}$ and, using that $\Phi_\vep(t) \geq \tfrac{1}{2c_1}(t/\vep)^2$ for $t \in (0,\vartheta_1\vep]$ combined with \eqref{eq:NonDegDelta}, it follows
\[
\begin{aligned}
\int_{B_{3/4}\cap\{u \geq t\}} u \dx &\leq \omega(1)
  \int_{B_{3/4}\cap\{u \geq t\}} \dx \leq \omega(1) \int_{B_{3/4}\cap\{\Phi_\varepsilon(u) \geq \Phi_\varepsilon(t)\}} \dx \\
&\leq c_1 \omega(1)
 \left( \frac{\varepsilon}{t} \right)^2 \int_{B_{3/4}} \Phi_\varepsilon(u) \dx \leq c_1 c_N \left( \frac{\varepsilon}{t} \right)^2 \omega^3(1),
\end{aligned}
\]
where the last inequality is a direct application of \eqref{eq:NonDegFunEst}. Substituting into the inequality above, we deduce
\[
u(y) \leq c_N \left[ c_1c_N \left( \frac{\varepsilon}{t} \right)^2 \omega^3(1) +  t \right] \leq c \left[ \left( \frac{\varepsilon}{t} \right)^2 \omega^3(1) +  t \right],
\]
for some $c > 0$ depending only on $N$ and $c_1$, and so, by the arbitrariness of $y \in B_{1/2}$,
\begin{equation}\label{eq:NonDegDecayIneq}
\omega \big(\tfrac{1}{2} \big) \leq c \Big[ \left( \frac{\varepsilon}{t} \right)^2 \omega^3(1) +  t \Big].
\end{equation}
Setting $t := \min\{ \max\{ \vep, \omega(1) \}^{1+2\sigma} , \vartheta_1\vep\} $, we have that $t \leq \vartheta_1\vep$ thanks to the definition of $\varepsilon_0$. So, using that $\sigma = \tfrac{1}{3}$, we may re-write \eqref{eq:NonDegDecayIneq} as
\begin{equation}\label{wntowetow}
\omega \big(\tfrac{1}{2}\big) \leq c \max\{\varepsilon,\omega(1)\}^{1+2\sigma}.
\end{equation}

Let us now assume by contradiction that we have  $\vep\le \omega_0$ and $\omega(1)\le \omega_0$, for $\omega_0 \in (0,1/4)$ sufficiently small so that
\eqref{wntowetow} implies
\[
\omega \big(\tfrac{1}{2}\big) \leq \max\{\varepsilon,\omega(1)\}^{1+\sigma}.
\]
After scaling (applying the above inequality to $u_\vep(rx)/r$), we obtain provided $\vep/r\in (0,\omega_0)$,
\[
\omega \big(\tfrac{r}{2}\big) \leq \max\{\varepsilon/r,\omega(r)\}^{1+\sigma}.
\]

Iterating the above inequality, we obtain that whenever $2^k\vep\le \omega_0$, we have either 
\[
{\rm (i) } \quad \omega(2^{-k}) \leq (2^k\varepsilon)^{1+\sigma} \quad \mbox{or} \quad 
{\rm (ii)} \quad  \omega(2^{-k}) \leq \omega(1)^{(1+\sigma)^k},
\]
for all $k \in \mathbb{N}$.
Finally, choosing
\[
k := \lceil \log_2(\varepsilon^{-1/2}) \rceil,
\]
we have $2^{-k}\le  \vep^{1/2}\le 2^{-k+1}$ and hence $2^{k} \vep \le 2 \vep^{1/2} \in (0,\omega_0)$, provided $\vep\in (0, \vep_0)$ with $\vep_0>0$ sufficiently small.

Hence, recalling  $\omega(1)  \le \omega_0 \le \tfrac{1}{4}$ and that by assumptionn $0 \in \{u_\varepsilon \geq \vartheta_1\varepsilon \}$, we have
\begin{equation}\label{eq:NonDegLastBound1}
\max\{(2^k \vep)^{1+\sigma}, (1/4)^{(1+\sigma)^k}\} \ge \omega(2^{-k}) := 2^k \sup_{B_{2^{-k}}} u \geq \vartheta_1 \varepsilon^{1/2},
\end{equation}
which clearly gives a contradiction if $\vep\in(0,\vep_0)$ with $\vep_0$ chosen sufficiently small (since $(2^k \vep)^{1+\sigma} \le (2\vep^{1/2})^{1+\sigma} \ll \vep^{1/2}$ 
and $(1/4)^{(1+\sigma)^k} \ll 4^{-k} \ll \vep^{1/2}$ as $\vep \downarrow 0$).
\end{proof}
%
%
%
%
%
%%%%%%%%%%%%%%%%%%%%%%%%%%%%%%%%%%%%%%%%%%%%%%%%%%%%%%%%%%%%%%%%%%%%%%%%%%%%%%%%%%%%%%%%%%%%%%%%%%%%%%%%%%%%%

%%%%%%%%%%%%%%%%%%%%%%%%%%%%%%%%%%%%%%%%%%%%%%%%%%%%%%%%%%%%%%%%%%%%%%%%%%%%%%%%%%%%%%%%%%%%%%%%%%%%%%%%%%%%%
%
%
%
%

%
\section{Proof of Proposition \ref{prop:main}}\label{Sec:AsymptBlowDown}
This is section is devoted to the proof of Proposition \ref{prop:main}. It will be obtained as a corollary of the following result, which is its equivalent version in terms of blow-down families.
\begin{prop}\label{prop:LimitBlowDown} Let $\Phi$  be as in \eqref{PHI111}-\eqref{PHI222} and let $\vartheta_1$ and $\vartheta_2$ as in \eqref{eq:AssumptionsPhi}. Let $u:\RR^N \to \RR_+$ be a minimizer of $\EE$ in $\RR^N$ not identically 0, with $0 \in \{ \vth_1 \leq u \leq \vth_2\}$. Let $\{\vep_j\}_{j\in\NN}$ be a sequence satisfying $\vep_j \to 0$ as $j \to +\infty$ and let $u_{\vep_j}$ be the corresponding blow-down family.

Then for every $\alpha \in (0,1)$, there exist sequences $\varepsilon_{j_\ell},\dd_\ell \to 0$ and a 1-homogeneous entire local minimizer of \eqref{eq:Energy1Phase} $u_0 \in W_{loc}^{1,\infty}(\RR^N)$ --- also not identically 0 --- such that
\begin{equation}\label{eq:ConvToHalhPlaneSol}
|u_{\varepsilon_{j_\ell}} - u_0| \leq \dd_\ell \quad \text{ in }  B_1,
\end{equation}
and
\begin{equation}\label{eq:1DSymAss2XXbis}
\{ x: {\rm dist} (x,  \{u_0>0\}) \ge \delta_\ell \} \subset \{ u_{\vep_{j_\ell}} \leq \vth_1\vep_{j_\ell} \} \subset \{ u_{\vep_{j_\ell}} \leq \vth_2\vep_{j_\ell} \} \subset  \{x: {\rm dist} (x,  \{u_0=0\}) \le \delta_\ell  \} \quad \text{ in } B_1,
\end{equation}
for every $\ell \in \NN$.
\end{prop}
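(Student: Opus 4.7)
The plan is to proceed in three steps: first extract a locally uniform subsequential limit $u_0$ of the blow-down family and verify it is a nontrivial entire minimizer of $\mathcal E_0$; then upgrade $u_0$ to a $1$-homogeneous minimizer via a Weiss-type monotonicity argument; finally translate the uniform convergence into the two-sided inclusion \eqref{eq:1DSymAss2XXbis} by combining non-degeneracy with continuity of $u_0$.

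For the compactness step, a change of variables makes $u_{\vep_j}(x)=\vep_j u(x/\vep_j)$ a minimizer of $\mathcal E_{\vep_j}$ on every ball, with $u_{\vep_j}(0)\in[\vth_1\vep_j,\vth_2\vep_j]$. Proposition \ref{cor:LipEst} gives local equi-Lipschitzianity, and a standard diagonal Arzel\`a--Ascoli argument yields a subsequence $\vep_{j_\ell}\downarrow 0$ and $u_0\in W^{1,\infty}_{loc}(\RR^N)$ with $u_{\vep_{j_\ell}}\to u_0$ locally uniformly. By \cite[Theorem~1.15]{CafSal2005:book}, $u_0$ is an entire minimizer of $\mathcal E_0$. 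To rule out $u_0\equiv 0$ I would invoke the non-degeneracy Lemma \ref{lem:NonDegEst}: for any $r>0$ and all $\ell$ large enough that $r\ge\kappa\vep_{j_\ell}$, one has $\sup_{B_r}u_{\vep_{j_\ell}}\ge c_\kappa r$, which passes to the limit.

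For the $1$-homogeneity I would introduce the Weiss-type functional
\[
W_\vep(r;v):=\frac{1}{r^N}\int_{B_r}\bigl(|\nabla v|^2+\Phi_\vep(v)\bigr)\dx-\frac{1}{r^{N+1}}\int_{\partial B_r}v^2 \dsi.
\]
A direct scaling check gives the identity $W_\vep(r;u_\vep)=W_1(r/\vep;u)$. A Pohozaev-type identity for critical points of $\mathcal E$ yields monotonicity of $r\mapsto W_1(r;u)$ up to a correction that vanishes as $r\to\infty$, and Proposition \ref{cor:LipEst} ensures it is bounded; hence $L:=\lim_{r\to\infty}W_1(r;u)$ exists. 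Consequently $W_{\vep_{j_\ell}}(r;u_{\vep_{j_\ell}})\to L$ for every fixed $r>0$. On the other hand, the locally uniform convergence of $u_{\vep_{j_\ell}}$ together with the $L^1_{loc}$ convergence $\Phi_{\vep_{j_\ell}}(u_{\vep_{j_\ell}})\to\chi_{\{u_0>0\}}$ from \cite[Theorem~1.15]{CafSal2005:book} implies $W_{\vep_{j_\ell}}(r;u_{\vep_{j_\ell}})\to W_0(r;u_0)$ for a.e.\ $r>0$. Therefore $W_0(\cdot;u_0)\equiv L$, and the exact Weiss formula for minimizers of $\mathcal E_0$ forces $u_0$ to be $1$-homogeneous.

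Finally, to derive \eqref{eq:1DSymAss2XXbis} I would establish each inclusion point-by-point by contradiction and conclude via compactness of $\overline{B_1}$. If $x_0\in B_1$ satisfies $\dist(x_0,\{u_0>0\})\ge\delta$ and $u_{\vep_{j_\ell}}(x_0)\ge\vth_1\vep_{j_\ell}$, then Lemma \ref{lem:NonDegEst} (applied in $B_{\delta/2}(x_0)$ with $\vep_{j_\ell}$ small enough) gives $\sup_{B_{\delta/2}(x_0)}u_{\vep_{j_\ell}}\ge c_\kappa\delta/2$, contradicting the locally uniform convergence to $u_0\equiv 0$ on $B_{\delta/2}(x_0)$. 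Symmetrically, if $x_0\in B_1$ has $\dist(x_0,\{u_0=0\})\ge\delta$, continuity of $u_0$ yields $u_0\ge c_0>0$ on $\overline{B_{\delta/2}(x_0)}$, and uniform convergence forces $u_{\vep_{j_\ell}}(x_0)>\vth_2\vep_{j_\ell}$ for large $\ell$. A finite cover of $\overline{B_1}$ by balls of arbitrarily small radius then extracts a single vanishing sequence $\dd_\ell\downarrow 0$ that works uniformly. The main obstacle is the homogeneity step, where one needs to handle the Weiss-type functional in the semilinear regime carefully, monitoring both its monotonicity up to errors and its convergence to the exact one-phase Weiss functional under blow-down; the remaining steps are routine consequences of the uniform estimates of Section \ref{Sec:NonDeg}.
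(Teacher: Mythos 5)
Your proposal follows essentially the same architecture as the paper's proof: (i) uniform Lipschitz estimate plus Arzel\`a--Ascoli to extract a subsequential limit $u_0$, which the paper establishes directly in its Lemma \ref{lem:ConvFunctionals} while you outsource to \cite[Theorem~1.15]{CafSal2005:book}; (ii) non-degeneracy (Lemma \ref{lem:NonDegEst}) to rule out $u_0\equiv 0$; (iii) a Weiss-type monotonicity argument for $1$-homogeneity; and (iv) non-degeneracy plus compactness of $\overline{B_1}$ to promote the locally uniform convergence to the two-sided inclusion \eqref{eq:1DSymAss2XXbis}, which mirrors the paper's Lemma \ref{lem:HaussConvGammaWeak} and Corollary \ref{lem:HaussConvGammaWeak1}.

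Two points in your monotonicity step deserve attention, although neither breaks the argument. First, the claim that the Weiss quantity $W_1(r;u)$ is ``monotone up to a correction that vanishes as $r\to\infty$'' undersells the situation: the paper's Lemma \ref{lem:WeissFormula} shows that the derivative of $\WW(u,x_0,\cdot)$ is a sum of two manifestly nonnegative terms (the usual Rellich--Pohozaev boundary square plus $r^{-1-N}\int_{B_r} u\,\Phi'(u)$, which is $\ge 0$ because $\Phi'\ge 0$ and $u\ge 0$), so $\WW$ is \emph{exactly} nondecreasing; no error term needs to be controlled at infinity. Second, your route to $W_0(\cdot;u_0)\equiv L$ requires the convergence $\int_{B_r}\Phi_{\vep_{j_\ell}}(u_{\vep_{j_\ell}})\to\int_{B_r}\chi_{\{u_0>0\}}$, which is the delicate part: locally uniform convergence alone does not prevent $\Phi_{\vep_j}(u_{\vep_j})$ from staying bounded away from $0$ inside $\{u_0=0\}$ (since $\Phi_{\vep_j}(u_{\vep_j})=\Phi(u_{\vep_j}/\vep_j)$, and one only sees $u_{\vep_j}\to 0$, not $u_{\vep_j}/\vep_j\to 0$). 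To make this rigorous one either needs the paper's exponential decay estimate (Lemma \ref{lem:ExpDecayu}) in $\{u_{\vep_j}\le\vth_1\vep_j\}$ or a genuine $\Gamma$-limit equality via competitor gluing. The paper sidesteps this entirely by passing to the limit in the \emph{integrated derivative} inequality
\[
\WW(u,R/\vep)-\WW(u,\varrho/\vep)\ \ge\ 2\int_\varrho^R r^{-N}\!\int_{\partial B_r}\Bigl(\partial_n u_\vep-\tfrac{u_\vep}{r}\Bigr)^2\,\rd\sigma\,\rd r,
\]
whose right-hand side involves only $u_\vep$ and $\nabla u_\vep$ (not $\Phi_\vep(u_\vep)$), so $H^1_{loc}$ and $C^\alpha_{loc}$ convergence suffice; since the left-hand side vanishes as $\vep\downarrow 0$, one directly gets $\partial_n u_0=u_0/r$ a.e., hence $1$-homogeneity. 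Your argument is correct if you supply the $L^1_{loc}$ convergence of the potential, but the paper's route is lighter on prerequisites and you may wish to adopt it.
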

The above statement will follow as a byproduct of several auxiliary results, having independent interest: in Lemma \ref{lem:ConvFunctionals} we prove that families of minimizers of \eqref{eq:Energy} converge (in a suitable sense, up to subsequences) to a minimizer of \eqref{eq:Energy1Phase}, while in Lemma \ref{lem:HaussConvGammaWeak} and Corollary \ref{lem:HaussConvGammaWeak1} we deal with the convergence of the level sets of $u_\vep$. Proposition \ref{prop:LimitBlowDown} is a consequence of these facts and a Weiss type monotonicity formula (Lemma \ref{lem:WeissFormula}).
\begin{lem}\label{lem:ConvFunctionals}% \color{blue} I would not eliminate this, since the proof in Caffarelli-Salsa is a little bit different. I think our proof is more direct. \normalcolor

Let $R > 0$ and $\{u_{\varepsilon_j}\}$, $\vep_j\downarrow 0$, be a sequence of minimizers of \eqref{eq:Energy} in $B_R$, with $\vep = \vep_j$.
Assume $u_{\vep_j}(0) \le \vartheta_2\vep_j$. 
Then, up to subsequence, we have
\begin{equation}\label{eq:ConvH1Caa}
u_{\varepsilon_j} \to u_0 \quad \text{ in } H_{loc}^1(B_R)\cap C_{loc}^\alpha(B_R), \quad \mbox{for all $\alpha \in (0,1)$,}
\end{equation}
as $j \to +\infty$, where $u_0 \in W_{loc}^{1,\infty}(B_R)$ is a minimizer of \eqref{eq:Energy1Phase} in $B_R$.
\end{lem}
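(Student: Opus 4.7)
The proof follows the $\Gamma$-convergence strategy for semilinear approximations of the one-phase Bernoulli functional, in the spirit of \cite[Theorem 1.15]{CafSal2005:book}. It splits into three main ingredients: compactness, a liminf inequality, and the construction of a recovery sequence; the strong $H^1_{loc}$ convergence then comes for free at the end.

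\emph{Compactness.} Proposition \ref{cor:LipEst}, applied after the scaling $\tilde u(x):=u_{\vep_j}(Rx)/R$ that turns $u_{\vep_j}$ into a critical point of $\EE_{\vep_j/R}$ on $B_1$ with $\tilde u(0)\le \vth_2(\vep_j/R)$, yields a uniform Lipschitz bound on every $V\suubset B_R$. Since $u_{\vep_j}\ge 0$ and $u_{\vep_j}(0)\le \vth_2\vep_j\to 0$, integrating the Lipschitz bound produces uniform $L^\infty_{loc}$ bounds too. Arzelà–Ascoli then gives, up to a subsequence, a uniform-on-compacts limit $u_0\in W^{1,\infty}_{loc}(B_R)$; interpolation against the Lipschitz bound upgrades this to $C^\alpha_{loc}$ for every $\alpha\in(0,1)$, and a further extraction yields weak $H^1_{loc}$ convergence to the same $u_0$.

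\emph{Liminf inequality.} Fix $V\suubset B_R$. Weak $H^1$ lower semicontinuity gives $\int_V|\nabla u_0|^2\le\liminf_j\int_V|\nabla u_{\vep_j}|^2$. On $\{u_0>0\}$ the uniform convergence forces $u_{\vep_j}/\vep_j\to+\infty$, so $\Phi_{\vep_j}(u_{\vep_j})\to 1$ pointwise there, and Fatou yields $\int_V\chi_{\{u_0>0\}}\le\liminf_j\int_V\Phi_{\vep_j}(u_{\vep_j})$. Summing, $\EE_0(u_0,V)\le\liminf_j\EE_{\vep_j}(u_{\vep_j},V)$.

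\emph{Recovery sequence and minimality.} Given any competitor $w$ with $w-u_0\in H^1_0(V)$, I would truncate $w^\kappa:=(w-\kappa)_+$ (so that $w^\kappa\ge\kappa$ on its support and $\EE_0(w^\kappa,V)\to\EE_0(w,V)$ as $\kappa\downarrow 0$, using $|\{0<w\le\kappa\}|\to 0$), and paste it to $u_{\vep_j}$ near $\partial V$ through a cutoff $\eta_\delta\in C^\infty_c(V)$ equal to $1$ on $V_\delta:=\{x\in V:\dist(x,\partial V)>\delta\}$, setting
\[
w_{\vep_j}^{\kappa,\delta}:=\eta_\delta w^\kappa+(1-\eta_\delta)u_{\vep_j}.
\]
Then $w_{\vep_j}^{\kappa,\delta}-u_{\vep_j}=\eta_\delta(w^\kappa-u_{\vep_j})\in H^1_0(V)$, so minimality of $u_{\vep_j}$ yields $\EE_{\vep_j}(u_{\vep_j},V)\le\EE_{\vep_j}(w_{\vep_j}^{\kappa,\delta},V)$. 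On $V_\delta$ the separation $w^\kappa\ge\kappa$ forces $\Phi_{\vep_j}(w^\kappa)\equiv 1$ on $\{w^\kappa>0\}$ once $\vth_2\vep_j<\kappa$, and $\Phi_{\vep_j}(w^\kappa)=0$ on $\{w^\kappa=0\}$; on the boundary tube $V\setminus V_\delta$ the uniform Lipschitz estimate bounds the Dirichlet contribution by $C|V\setminus V_\delta|=O(\delta)$, while the cutoff cross-term $\int|\nabla\eta_\delta|^2|w^\kappa-u_{\vep_j}|^2$ is tamed via Hardy's inequality for $H^1_0$ functions ($\int_{V\setminus V_\delta}|w-u_0|^2\lesssim\delta^2\|w-u_0\|_{H^1}^2$) combined with $\|u_{\vep_j}-u_0\|_{L^\infty(V)}=o_j(1)$. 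Passing to the limit $j\to\infty$, then $\delta\downarrow 0$, then $\kappa\downarrow 0$ yields $\EE_0(u_0,V)\le\EE_0(w,V)$. Specializing $w=u_0$ and comparing with the liminf step forces $\int_V|\nabla u_{\vep_j}|^2\to\int_V|\nabla u_0|^2$, which together with the weak $H^1$ convergence upgrades to strong $H^1_{loc}$ convergence. The most delicate point is the recovery sequence: Hardy's inequality gives only a borderline bound on the cross-term, so either $\eta_\delta$ must be supported in a narrower annulus of width $\eta(\delta)\ll\delta$, or the parameters $\kappa,\delta,\vep_j$ must be coupled carefully so that the error becomes $o(1)$ before the final triple passage to the limit.
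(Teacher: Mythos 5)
Your argument is correct in substance but follows a genuinely different route from the paper's. For the compactness you both invoke Proposition \ref{cor:LipEst} and Arzel\`a--Ascoli, but the paper obtains the \emph{strong} $H^1_{loc}$ convergence up front, using that each $u_{\vep_j}$ is subharmonic together with a compactness lemma for bounded subharmonic sequences (\cite[Lemma A.1]{CabreEtAl2020:art}) and interpolation, whereas you recover it a posteriori from energy convergence (liminf inequality plus the limsup from minimality applied to $w=u_0$) --- both are legitimate, though the paper's order matters for its own limsup step, since it passes to the limit in $\int_V|\nabla(u_{\vep_j}+\xi-\delta\varphi)|^2$ using strong convergence. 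For minimality, the paper does not build a cut-and-paste recovery sequence at all: it tests $u_{\vep_j}$ against the admissible perturbation $u_{\vep_j}+\xi-\delta\varphi$ with $\varphi>0$ vanishing on $\partial B_r$, and handles the potential term by showing $\Phi_{\vep_j}(u_{\vep_j}+\xi-\delta\varphi)\to\chi_{\{u_0+\xi-\delta\varphi>0\}}$ a.e.\ for a.e.\ $\delta$ (the shift by $\delta\varphi$ plus a coarea argument makes the zero level set null), then lets $\delta\to0$ along good values; this avoids the boundary-layer bookkeeping entirely. Your pasting argument works, but two points deserve attention: first, the truncation $w^\kappa=(w-\kappa)_+$ is unnecessary, because $\Phi_{\vep}(t)\le\chi_{\{t>0\}}$ pointwise for every $\vep$, so $\EE_{\vep_j}(v,V)\le\EE_0(v,V)$ for any competitor and no separation from the zero level is needed; dropping $\kappa$ also removes the troublesome term of size $\kappa^2/\delta$ in the cutoff cross-term, which with your stated order of limits ($j$, then $\delta$, then $\kappa$) would blow up --- as written you would need to send $\kappa\to0$ before $\delta\to0$ or couple $\kappa\ll\sqrt{\delta}$, which is exactly the coupling you flag. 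Second, the Hardy-type bound is not borderline: since $\int_V|w-u_0|^2\,d(x)^{-2}\dx<\infty$, absolute continuity gives $\delta^{-2}\int_{V\setminus V_\delta}|w-u_0|^2\dx\to0$, and Hardy's inequality (or simply taking $V$ a ball, which suffices by Definition \ref{def:LocMinimizer}) requires no extra hypotheses here. With these adjustments your proof is complete; it is somewhat longer than the paper's but more in the standard $\Gamma$-convergence mold, while the paper's perturbation trick is shorter and sidesteps both the cutoff estimates and the strong-convergence-at-the-end step.
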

\begin{proof}
By scaling we may assume $R=1$. By Proposition \ref{cor:LipEst}, the family $\{u_\varepsilon\}_{\varepsilon\in(0,1)}$ is uniformly bounded in $W_{loc}^{1,\infty}(B_1)$. So, by the Ascoli-Arzel\`a theorem, for every $\alpha \in (0,1)$, there exists $u_0 \in W_{loc}^{1,\infty}(B_1)$ and $\varepsilon_j \to 0$ as $j \to +\infty$ such that $u_{\vep_j} \to u_0$ in $C_{loc}^\alpha(B_1)$. Furthermore, since in addition each $u_\varepsilon$ is subharmonic and $\{u_\varepsilon\}_{\varepsilon \in (0,1)}$ is uniformly bounded in $L_{loc}^2(B_1)$, we deduce $u_{\vep_j} \to u_0$ in $W^{1,1}_{loc}(B_1)$, up to subsequence (see for instance \cite[Lemma A.1]{CabreEtAl2020:art}). Consequently, since $u_{\vep_j}, u_0 \in W_{loc}^{1,\infty}(B_1)$ we deduce $u_{\vep_j} \to u_0$ in $H_{loc}^1(B_1)$ by interpolation and \eqref{eq:ConvH1Caa} is proved.

Now, let us set for simplicity $u := u_0$ and $u_j := u_{\varepsilon_j}$. Let us fix $V \suubset B_1$ and show that
\begin{equation}\label{eq:LocSemContFun0}
\mathcal{E}_0(u,V) \leq \liminf_{j\to+\infty} \mathcal{E}_{\varepsilon_j}(u_j,V).
\end{equation}
Indeed, by $H^1_{loc}$ convergence, it is enough to check that
\begin{equation}\label{eq:FunCovPhi}
\int_V \chi_{\{u > 0\}} \dx \leq \liminf_{j\to+\infty} \int_V \Phi_{\varepsilon_j} (u_j) \dx.
\end{equation}
To show \eqref{eq:FunCovPhi}, we first notice that $\Phi_{\varepsilon_j}(u_j) \to 1$  in $\{u > 0\}$. Indeed, if $x \in \{u > 0\}$, that is $u(x) \geq \epsilon_x$ for some $\ep_x > 0$, then $u_j(x) \geq \epsilon_x/2 > 0$ for all $j$ large enough. Now, by monotonicity, $\Phi_{\varepsilon_j}(\epsilon_x/2) \leq \Phi_{\varepsilon_j}(u_j(x))$ for $j$ large enough and thus, by definition of $\Phi_\varepsilon$,
\[
1 = \lim_{j\to+\infty} \Phi_{\varepsilon_j}(\epsilon_x/2) \leq \limsup_{j\to+\infty} \Phi_{\varepsilon_j}(u_j(x)) \leq 1.
\]
Consequently, by Fatou's lemma
\[
\int_V \chi_{\{u > 0\}} \dx = \int_{ V \cap \{ u > 0 \} }  \dx \leq \liminf_{j\to+\infty} \int_{V \cap \{ u > 0 \}} \Phi_{\varepsilon_j} (u_j) \dx \leq \liminf_{j\to+\infty} \int_V \Phi_{\varepsilon_j} (u_j) \dx,
\]
and \eqref{eq:FunCovPhi} follows.

Once \eqref{eq:LocSemContFun0} is established, let us fix $V := B_{r}$, $r<1$, $\xi \in C_0^\infty(V)$, and $\varphi  \in C^\infty(\overline B_r)$ vanishing on $\partial B_r$ with $\varphi > 0$ in $B_r$. Since $u_j$ is a local minimizer, we have
\begin{equation}\label{eq:EEjLocTest}
\EE_{\vep_j}(u_j,V) \leq \EE_{\vep_j}(u_j + \xi - \delta\varphi,V),
\end{equation}
for all $j \in \NN$ and $\delta > 0$. Since $u_j \to u$ in $H^1(V)$, we immediately see that
\begin{equation}\label{eq:H1locConvGrad}
\int_V |\nabla (u_j + \xi) - \delta \nabla \varphi|^2 \dx \to \int_V |\nabla (u + \xi) - \delta \nabla \varphi|^2 \dx
\end{equation}
as $j\to +\infty$. Now, if $x \in \{u + \xi - \delta \varphi > 0 \} \cap V$, there is $\epsilon_x >0$ such that $u(x) + \xi(x) - \delta \varphi(x) \geq \epsilon_x$ and, since $u_j \to u$ locally uniformly, it must be $u_j(x) + \xi(x) - \delta \varphi(x) \geq \epsilon_x/2$ for every $j$ large enough. Consequently, by monotonicity,
\[
1 = \lim_{j\to+\infty} \Phi_{\vep_j}(\epsilon_x/2) \leq \limsup_{j\to+\infty} \Phi_{\vep_j}(u_j(x) + \xi(x) - \delta \varphi(x)) \leq 1.
\]
Similar, whenever $x \in \{u + \xi - \delta \varphi < 0 \} \cap V$, then $u(x) + \xi(x) - \delta \varphi(x) \leq -\epsilon_x$ for some $\epsilon_x >0$ and so $u_j(x) + \xi(x) - \delta \varphi(x) \leq -\epsilon_x/2$ for every $j$ large enough, which implies
\[
0 \leq \Phi_{\vep_j}(u_j(x) + \xi(x) - \delta \varphi(x)) \leq \Phi_{\vep_j}(-\ep_x/2) = 0,
\]
when $j$ is large enough. On the other hand, for  and every $m\in \mathbb N$, we have\footnote{To see this, it is enough to apply the Coarea formula to the function $\tfrac{u+\xi}{\varphi}$, which is Lipschitz in $B_{r-1/m}$.}
\begin{equation}\label{eq:PropDelta}
|\{u + \xi - \delta \varphi = 0 \}\cap B_{r-1/m}| = 0 \quad \mbox{for all  $\delta \in E_m\subset (0,1)$, where $|(0,1)\setminus E_m| =0$}  .
\end{equation}
Consequently, since $|\cup_m ((0,1)\setminus E_m)| =0$,
\begin{equation}\label{eq:PropDelta2}
|\{u + \xi - \delta \varphi = 0 \}\cap B_{r}| = 0 \quad \mbox{for a.e. $\delta \in (0,1)$},
\end{equation}
and we deduce that for a.e. $\delta > 0$, $\Phi_{\vep_j}(u_j + \xi - \delta \varphi) \to \chi_{\{ u + \xi - \delta \varphi > 0 \}}$ a.e. in $B_r$, as $j \to +\infty$.

So, putting together \eqref{eq:LocSemContFun0}, \eqref{eq:EEjLocTest}, \eqref{eq:H1locConvGrad}, noticing that $\{u + \xi - \delta \varphi > 0\} \subseteq \{u + \xi > 0\}$ and passing to the limit as $j \to +\infty$ by means of the dominated convergence theorem, we find
\[
\begin{aligned}
\EE_0(u,V) &\leq \int_V |\nabla (u + \xi) - \delta \nabla \varphi|^2 + \chi_{\{ u + \xi - \delta \varphi > 0 \}}\dx \\
&\leq \EE_0(u+\xi,V) + 2\delta \|\nabla (u+\xi)\|_{L^2(V)}\|\nabla \varphi \|_{L^2(V)} + \delta^2\|\nabla \varphi \|_{L^2(V)}^2,
\end{aligned}
\]
for a.e. $\delta > 0$. Finally, passing to the limit along a sequence $\delta = \delta_k \to 0$ for which \eqref{eq:PropDelta2} is satisfied for every $k\in\NN$, we find $\EE_0(u,V) \leq \EE_0(u+\xi,V)$ and the thesis follows by the arbitrariness of $B_r\suubset B_1$ and $\xi \in C_0^\infty(B_r)$.
\end{proof}
\begin{lem}\label{lem:HaussConvGammaWeak}
Let $R > 0$, $\{u_\varepsilon\}_{\varepsilon \in (0,1)}$ and $u_0$ as in Lemma \ref{lem:ConvFunctionals}. Then, for every $\vth \geq \vth_1$, there exists a sequence $\vep_j \to 0$ such that
\begin{equation}\label{eq:HausConvSupp}
\{ u_{\varepsilon_j} \geq \vartheta \varepsilon_j \} \to \overline{\{ u_0 > 0 \}} \quad \text{ locally Hausdorff in } B_R,
\end{equation}
as $j \to +\infty$.
\end{lem}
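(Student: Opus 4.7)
The plan is to work along the subsequence extracted in Lemma \ref{lem:ConvFunctionals}, on which $u_{\vep_j} \to u_0$ locally uniformly (and in $H^1_{loc}$) in $B_R$, and to verify directly the two set-theoretic inclusions defining local Hausdorff convergence of $\{u_{\vep_j} \geq \vartheta \vep_j\}$ towards $\overline{\{u_0 > 0\}}$. Concretely, fix an arbitrary compact $K \suubset B_R$ and $\eta > 0$; I must show that for all $j$ large
\[
\{u_{\vep_j} \geq \vartheta \vep_j\} \cap K \subset N_\eta\bigl(\overline{\{u_0>0\}}\bigr) \quad \text{and} \quad \overline{\{u_0>0\}} \cap K \subset N_\eta\bigl(\{u_{\vep_j} \geq \vartheta \vep_j\}\bigr),
\]
where $N_\eta(A) := \{x : \dist(x,A) < \eta\}$.

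For the second (lower) inclusion, no subsequence beyond the one of Lemma \ref{lem:ConvFunctionals} is needed: given $x_0 \in \overline{\{u_0 > 0\}} \cap K$, pick $y \in B_{\eta/2}(x_0)$ with $u_0(y) =: 2c > 0$; locally uniform convergence on a slightly enlarged compact gives $u_{\vep_j}(y) \geq c > \vartheta\vep_j$ for all $j$ large, so $y$ lies in $\{u_{\vep_j} \geq \vartheta \vep_j\} \cap B_{\eta/2}(x_0)$. A routine compactness argument on $K$ (covering by finitely many such balls) makes the threshold index $j$ uniform in $x_0$.

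The first (upper) inclusion is the non-trivial one, and it is where the non-degeneracy estimate of Lemma \ref{lem:NonDegEst} enters crucially. Arguing by contradiction, suppose that along a further subsequence (not relabelled) there exist $x_j \in \{u_{\vep_j} \geq \vartheta \vep_j\} \cap K$ with $\dist(x_j, \overline{\{u_0>0\}}) \geq \eta$, and extract $x_j \to x_\infty \in K$. Then $u_0 \equiv 0$ on $B_{\eta/2}(x_\infty)$, hence $\sup_{B_{\eta/4}(x_j)} u_{\vep_j} \to 0$ by locally uniform convergence. On the other hand, the assumption $\vartheta \geq \vartheta_1$ gives $x_j \in \{u_{\vep_j} \geq \vartheta_1 \vep_j\}$, and Lemma \ref{lem:NonDegEst} with $r = \eta/4 \geq \kappa \vep_j$ (for fixed $\kappa$ and $j$ large) yields $\sup_{B_{\eta/4}(x_j)} u_{\vep_j} \geq c_\kappa\, \eta/4$, contradicting the previous bound. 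The main obstacle, or rather the essential ingredient that dictates the hypothesis $\vartheta \geq \vartheta_1$, is precisely this: below the threshold $\vartheta_1 \vep$ non-degeneracy fails and one only has the exponential decay of Lemma \ref{lem:ExpDecayu}, so that $\{u_{\vep_j} \geq \vartheta_1 \vep_j\}$ is indeed the correct semilinear analogue of the positivity set $\{u_0>0\}$, as already anticipated in the discussion preceding Theorem \ref{lem:ImpFlat3}.
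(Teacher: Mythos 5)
Your proof is correct and follows essentially the same strategy as the paper: for the non-trivial inclusion ($\{u_{\vep_j}\ge\vartheta\vep_j\}\cap K$ lies in an $\eta$-neighborhood of $\overline{\{u_0>0\}}$), both you and the paper argue by contradiction, converge $x_j\to x_\infty$ with $u_0\equiv 0$ near $x_\infty$, and collide this with the uniform non-degeneracy of Lemma~\ref{lem:NonDegEst} applied to $u_{\vep_j}$. The only deviation is in the easy inclusion: you use locally uniform convergence directly, with uniformity in $j$ obtained by covering the compact set by finitely many balls, whereas the paper first transfers the non-degeneracy estimate from $u_{\vep_j}$ to the limit $u_0$ and then runs a symmetric contradiction argument; both are valid, and your shortcut avoids the auxiliary non-degeneracy estimate for $u_0$ that the paper establishes in this proof.
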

\begin{proof} By scaling, we may assume $R=1$. Fix $\varrho \in (0,1)$ and $\vth \geq \vth_1$. Set $u = u_0$, $u_j = u_{\varepsilon_j}$, $U_j := \{ u_{\varepsilon_j} > \vartheta \varepsilon_j \} \cap B_\varrho$, $\Omega := \{ u > 0 \} \cap B_\varrho$, and notice that by assumption $0\in\Omega^c$. We first show that for every $z \in \overline{\Omega}$ and every $r > 0$ such that $B_r(z) \suubset B_1$, then
\begin{equation}\label{eq:NonDegBlowDown}
\sup_{B_r(z)} u \geq \tfrac {c} 2  r,
\end{equation}
where $c > 0$ is the constant appearing in Lemma \ref{lem:NonDegEst} for $\kappa=1/2$. Given such $z \in \overline{\Omega}$ and $r > 0$, we take $y \in B_{r/2}(z)$ such that $u(y) > 0$. So, by uniform convergence, $y \in U$ for $j$ large enough (and thus $u_j(y) > \vartheta_1\vep_j$). So by \eqref{eq:NonDegEst1} (with $\kappa = 1$), there is $x_j \in B_{r/2}(y)$ such that $u_j(x_j) \geq \tfrac c 2  r$. Now, up to passing to a subsequence, $x_j \to x \in \overline{B}_{r/2}(y)$ as $j \to +\infty$ and thus, by $C^\alpha_{loc}$ convergence, $u(x) \geq \tfrac c 2  r$ and \eqref{eq:NonDegBlowDown} follows since $x \in B_r(z)$.

Now fix $\sigma >0$, and define
\[
\Omega_\sigma := \{x: \text{dist}(x,\Omega) \leq \sigma \}, \qquad U_{j,\sigma} := \{x: \text{dist}(x,U_j) \leq \sigma \}.
\]
Let us show that $U_j \subset \Omega_\sigma$ for every $j \geq j_\sigma$, for some $j_\sigma$ large enough. Indeed, assume by contradiction there is a sequence $z_j$ such that $u_j(z_j) \geq \vartheta \varepsilon_j \geq \vartheta_1 \varepsilon_j$, but $z_j \not \in \Omega_\sigma$. Then, by \eqref{eq:NonDegEst1}, there is $j_\sigma$ such that
\[
u_j(x_j) := \sup_{B_{\sigma/2}(z_j)} u_j \geq \tfrac{c}{2} \, \sigma,
\]
for every $j \geq j_\sigma$ and some $x_j \in \overline{B}_{\sigma/2}(z_j)$. In addition, up to passing to a subsequence, $z_j \to z$, $x_j \to x \in \overline{B}_{\sigma/2}(z) \suubset \Omega^c$, and $u_j(x_j) \to u(x)$ as $j \to +\infty$, by $C^\alpha_{loc}$ convergence. Since $u(x) = 0$ by construction, we obtain a contradiction.

We also have $\overline{\Omega} \subset U_{j,\sigma}$ for every $j \geq j_\sigma$. Assume by contradiction there is $z_j \in \overline{\Omega}$ such that $z_j \not\in U_{j,\sigma}$. Then, by \eqref{eq:NonDegBlowDown}, there is $x_j \in \overline{B}_{\sigma/2}(z_j)$ such that $u(x_j) \geq \tfrac{c}{4} \sigma$ while, by construction, $u_j < \vartheta \varepsilon_j$ in $\overline{B}_{\sigma/2}(z_j)$. So, since $z_j \to z$, $x_j \to x \in \overline{B}_{\sigma/2}(z)$ (up to a subsequence), we have $\tfrac{c}{4} \sigma \leq u(x) \leq 0$, a contradiction. The limit \eqref{eq:HausConvSupp} follows from the arbitrariness of $\sigma > 0$.
\end{proof}
\begin{cor}\label{lem:HaussConvGammaWeak1}
Let $R > 0$, $\{u_\varepsilon\}_{\varepsilon \in (0,1)}$ and $u_0$ as in Lemma \ref{lem:ConvFunctionals}. Then, for every $\vth \geq \vth_1$, there exists a sequence $\vep_j \to 0$ such that
\begin{equation}\label{eq:HausConvCompSupp}
\{ u_{\varepsilon_j} \leq \vartheta \varepsilon_j \} \to \{ u_0 = 0 \} \quad \text{ locally Hausdorff in } B_R,
\end{equation}
as $j \to +\infty$.
\end{cor}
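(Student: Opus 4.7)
The plan is to mimic the two-sided strategy of Lemma \ref{lem:HaussConvGammaWeak}, establishing Hausdorff convergence on each compact ball $\overline B_\varrho \suubset B_R$ via the two defining inclusions. I would fix $\varrho \in (0,R)$, set $F_j := \{u_{\vep_j} \leq \vth\vep_j\} \cap \overline B_\varrho$ and $Z := \{u_0 = 0\} \cap \overline B_\varrho$, and denote by $N_\sigma(\cdot)$ the open $\sigma$-neighborhood. It is then enough to verify, for every $\sigma > 0$ and all $j$ large, that $F_j \subset N_\sigma(Z)$ and $Z \subset N_\sigma(F_j)$. The first inclusion I expect to obtain immediately from the uniform convergence of Lemma \ref{lem:ConvFunctionals}: on the compact set $K_\sigma := \{y \in \overline B_\varrho : \dist(y,Z) \geq \sigma\}$ one has $u_0 \geq m_\sigma > 0$ by compactness, hence $u_{\vep_j} \geq m_\sigma/2 > \vth\vep_j$ on $K_\sigma$ for $j$ large, and thus $F_j \cap K_\sigma = \emptyset$.

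For the reverse inclusion my plan is to rely on Lemma \ref{lem:HaussConvGammaWeak} applied with $\vth_1$ in place of $\vth$ (legitimate since $\vth \geq \vth_1$), combined with a finite covering argument. The crucial intermediate claim I would establish is: for every $z \in Z$ there exist $y(z) \in \overline B_{\sigma/2}(z)$ and $r(z) > 0$ such that $B_{r(z)}(y(z)) \subset \mathrm{int}\{u_0 = 0\} = B_R \setminus \overline{\{u_0 > 0\}}$. If $z \in \mathrm{int}\{u_0 = 0\}$, one simply takes $y(z) = z$. Otherwise $z$ lies on the free boundary $\partial\{u_0 > 0\}$, and here I would invoke two classical properties of non-trivial minimizers of $\EE_0$ (available in \cite{CafSal2005:book}): the positive density of the zero set at every free-boundary point, $|\{u_0 = 0\} \cap B_r(z)| \geq c_0 r^N$ for small $r$, together with the $N$-dimensional measure-zero property of the free boundary $|\partial\{u_0 > 0\}| = 0$. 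Combined these force $|\mathrm{int}\{u_0 = 0\} \cap B_{\sigma/2}(z)| > 0$ and produce a valid $y(z)$ with $r(z) := \dist(y(z), \overline{\{u_0 > 0\}}) > 0$.

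Once the claim is available, the plan is to extract a finite subcover $\{B_{\sigma/4}(z_k)\}_{k=1}^{K}$ of the compact set $Z$, set $y_k := y(z_k)$, $r_k := r(z_k)$, and $r^* := \min_k r_k > 0$. By Lemma \ref{lem:HaussConvGammaWeak}, for $j$ large along a suitable subsequence, $\{u_{\vep_j} \geq \vth_1\vep_j\} \subset N_{r^*/2}(\overline{\{u_0 > 0\}})$, which combined with $\dist(y_k, \overline{\{u_0 > 0\}}) \geq r^*$ forces $u_{\vep_j}(y_k) < \vth_1\vep_j \leq \vth\vep_j$, i.e.\ $y_k \in F_j$, for every $k$. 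For any $z \in Z$ one has $z \in B_{\sigma/4}(z_k)$ for some $k$, whence $|z - y_k| \leq |z - z_k| + |z_k - y_k| < \sigma/4 + \sigma/2 < \sigma$, giving $\dist(z, F_j) < \sigma$ as required.

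The main obstacle I anticipate is the reverse inclusion at free-boundary points $z \in \partial\{u_0 > 0\}$: pointwise uniform convergence $u_{\vep_j}(z) \to u_0(z) = 0$ does not by itself yield the quantitative rate $u_{\vep_j}(z) \leq \vth\vep_j$ needed to place $z$ directly into $F_j$. The argument sketched above bypasses this by locating nearby \emph{interior} points of $\{u_0 = 0\}$, where $u_{\vep_j}$ is forced below $\vth_1\vep_j$ by the Hausdorff convergence of the preceding lemma; this reduction is precisely where the classical density and measure-zero properties of the one-phase free boundary enter.
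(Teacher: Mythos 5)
Your proof is correct, but it takes a genuinely more careful route than the paper's. The paper dismisses the corollary in one line by "taking complements" of the conclusion of Lemma \ref{lem:HaussConvGammaWeak}; however, Hausdorff convergence of sets does not in general pass to complements (e.g.\ $A_j = [1/j,1]\to A=[0,1]$ in $[0,1]$, while $A_j^c = [0,1/j)\not\to A^c=\emptyset$), and the place where this subtlety actually bites is exactly the one you isolated: at free boundary points $z\in\partial\{u_0>0\}\subset\{u_0=0\}$, the pointwise convergence $u_{\vep_j}(z)\to 0$ carries no rate, so one cannot conclude directly that $z$ is close to $\{u_{\vep_j}\le\vth\vep_j\}$. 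Your workaround for the inclusion $\{u_0=0\}\subset N_\sigma(F_j)$ --- locating nearby interior points $y(z)\in\mathrm{int}\{u_0=0\}$ via the positive density of the zero phase and the $\mathcal{L}^N$-nullity of the free boundary, covering $Z$ finitely, and then invoking Lemma \ref{lem:HaussConvGammaWeak} at those interior points where it does give a quantitative statement --- is precisely the missing content. The density and measure-zero inputs are classical Alt--Caffarelli facts for minimizers of $\EE_0$ (available through \cite{CafSal2005:book}), so their use is legitimate even though the paper does not spell them out. Your treatment of the first (easy) inclusion $F_j\subset N_\sigma(Z)$ by compactness of $K_\sigma$ and the uniform convergence of Lemma \ref{lem:ConvFunctionals} is also fine and does not even need Lemma \ref{lem:HaussConvGammaWeak}. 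Net effect: same conclusion, but your argument identifies and fills a gap that the paper's complementation remark glosses over, at the cost of invoking regularity theory for the limit one-phase problem.
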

\begin{proof} It is enough to apply Lemma \ref{lem:HaussConvGammaWeak} and noticing that $\{ u_{\varepsilon_j} \leq \vartheta \varepsilon_j \} = \{ u_{\varepsilon_j} \geq \vartheta \varepsilon_j \}^c$ and $\{ u_0 = 0 \} = \{ u_0 > 0 \}^c$.
\end{proof}
\begin{lem}\label{lem:WeissFormula}
Let $u$ be a nonnegative entire local minimizer of \eqref{eq:Energy} with $\vep = 1$.

Then, for every $x_0 \in \RR^N$, the function
\begin{equation}\label{eq:WeissFunction}
r \to \WW(u,x_0,r) := r^{-N} \int_{B_r(x_0)} |\nabla u|^2 + \Phi(u) \dx - r^{-1-N} \int_{\partial B_r(x_0)} u^2 \dsi
\end{equation}
is well-defined in $(0,\infty)$ and satisfies
\begin{equation}\label{eq:WeissDerivative}
\frac{d}{dr}\WW(u,x_0,r) = 2r^{-N} \int_{\partial B_r(x_0)} \left(\partial_n u - \frac{u}{r} \right)^2 \rd\sigma + r^{-1-N} \int_{B_r(x_0)} u \Phi'(u) \,\rd\sigma,
\end{equation}
where $\partial_n u := \nabla u \cdot n$ and $n$ is the outward unit normal to $\partial B_r(x_0)$. In particular, the function $r \to \WW(u,x_0,r)$ is non-decreasing.
\end{lem}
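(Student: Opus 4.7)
The plan is to compute $\frac{d}{dr}\WW(u,x_0,r)$ directly, using two integral identities obtained by testing the Euler--Lagrange equation $\Delta u = \tfrac{1}{2}\Phi'(u)$ against suitable multipliers on $B_r(x_0)$, and then recognize the resulting expression as a sum of two manifestly non-negative terms. By translation invariance I may assume $x_0 = 0$. Elliptic regularity for the semilinear PDE (with $\Phi'$ smooth and compactly supported), combined with the Lipschitz estimate of Proposition \ref{cor:LipEst}, ensures $u$ is smooth and locally Lipschitz on $\RR^N$; hence $g(r) := \int_{B_r}(|\nabla u|^2 + \Phi(u))\dx$ and $h(r) := \int_{\partial B_r} u^2\dsi$ are $C^1$ functions on $(0,\infty)$, and so is $\WW(r) = r^{-N}g(r) - r^{-N-1}h(r)$.

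First I would derive a Pohozaev-type identity by multiplying the equation by $x\cdot\nabla u$ and integrating on $B_r$. Using $x\cdot n = r$ on $\partial B_r$ (so $(x\cdot\nabla u)|_{\partial B_r} = r\,\partial_n u$) and $\nabla(x\cdot\nabla u)\cdot\nabla u = |\nabla u|^2 + \tfrac12 x\cdot\nabla|\nabla u|^2$, integration by parts on both sides yields
\[
r\!\int_{\partial B_r}\!(\partial_n u)^2\dsi - \tfrac{r}{2}\!\int_{\partial B_r}\!(|\nabla u|^2 + \Phi(u))\dsi + \tfrac{N}{2}\!\int_{B_r}\!(|\nabla u|^2 + \Phi(u))\dx = \int_{B_r}|\nabla u|^2\dx.
\]
Second, testing the equation against $u$ on $B_r$ produces
\[
\int_{B_r}|\nabla u|^2\dx = \int_{\partial B_r}\! u\,\partial_n u\dsi - \tfrac{1}{2}\!\int_{B_r}\! u\,\Phi'(u)\dx.
\]

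To conclude, I differentiate $\WW$ using $g'(r) = \int_{\partial B_r}(|\nabla u|^2 + \Phi(u))\dsi$ and $h'(r) = \tfrac{N-1}{r}h(r) + 2\int_{\partial B_r} u\,\partial_n u\dsi$, which gives
\[
\WW'(r) = r^{-N-1}\bigl(rg'(r) - Ng(r)\bigr) + r^{-N-2}\bigl((N+1)h(r) - rh'(r)\bigr).
\]
Substituting the Pohozaev identity into the first bracket and then using the second identity to eliminate $\int_{B_r}|\nabla u|^2$, the $\Phi$-boundary terms cancel, the surface integrals reorganize as $2r^{-N}\int_{\partial B_r}(\partial_n u - u/r)^2\dsi$, and the only leftover bulk term is $r^{-N-1}\int_{B_r} u\,\Phi'(u)\dx$; this gives exactly \eqref{eq:WeissDerivative}. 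Monotonicity is then immediate since $u \geq 0$ and $\Phi' \geq 0$.

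I do not anticipate a real obstacle; the proof is a standard Weiss-type computation. The noteworthy feature, and the only point that requires care, is that unlike the one-phase energy $\EE_0$ the functional $\EE$ is not scale invariant. The ``non-scale-invariant'' remainder shows up precisely as the bulk term $r^{-N-1}\int_{B_r} u\,\Phi'(u)\dx$ in $\WW'(r)$, and it is essential (and fortunate) that this remainder has the same sign as the boundary square, so monotonicity is preserved in spite of the lack of scale invariance.
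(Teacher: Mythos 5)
Your proof is correct and is essentially the paper's argument: the paper follows Weiss by rescaling to $u_r(x)=u(x_0+rx)/r$ and differentiating in $r$, and its multiplier $\nabla u_r\cdot x-u_r$ is exactly the combination of your Pohozaev multiplier $x\cdot\nabla u$ and the multiplier $u$, so after changing variables the two computations coincide, including the sign of the extra bulk term $r^{-N-1}\int_{B_r}u\,\Phi'(u)$. One cosmetic remark: Proposition \ref{cor:LipEst} is neither needed nor directly applicable here (it assumes $u_\vep(0)\le\vartheta_2\vep$); the local regularity $u\in C^{2,\alpha}_{\rm loc}$ from the standard semilinear bootstrap, which is what the paper invokes, already justifies the differentiation under the integral and the integrations by parts.
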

%
% $\Delta u = \frac{1}{2}\Phi'(u)$
%
\begin{proof} We follow \cite[Theorem 2]{Weiss1999:art2}. Note first that under our assumptions $u$ is a critical point of $\int |\nabla u| + \Phi(u)$ with $\Phi$  of class $C^{1,1}$.  Hence $u$ satisfies  a semilinear equation of the type $\Delta u = f(u)$ with $f$ Lipschitz. Hence, by standard elliptic regularity and ``semilinear bootstrap'' we have $u\in C^{2,\alpha}_{\rm loc}(\RR^n)$. This qualitative regularity is enough in order to justify the computations below.

Fix $x_0 \in \RR^N$ and let $u_r(x) := \frac{u(x_0+rx)}{r}$. Then
\[
\WW(u,x_0,r) = \int_{B_1} |\nabla u_r|^2 \dx  + \int_{B_1} \Phi(r u_r) \dx - \int_{\partial B_1} u_r^2 \dsi.
\]
Noticing that $r\frac{d}{dr} u_r = \nabla u_r \cdot x -  u_r$ and using the equation of $u_r$, we obtain
\[
\begin{aligned}
\frac{d}{dr} \int_{B_1} |\nabla u_r|^2 \dx &= \tfrac{2}{r} \int_{B_1} \nabla u_r \cdot \nabla (\nabla u_r \cdot x - u_r) \dx \\
&= -\tfrac{2}{r} \int_{B_1} \Delta u_r  (\nabla u_r \cdot x - u_r) \dx + \tfrac{2}{r} \int_{\partial B_1} (\nabla u_r \cdot x) (\nabla u_r \cdot x -  u_r) \dsi \\
&= - \int_{B_1} \Phi' (r u_r)  (\nabla u_r \cdot x - u_r) \dx + \tfrac{2}{r} \int_{\partial B_1} (\nabla u_r \cdot x) (\nabla u_r \cdot x - u_r) \dsi.
\end{aligned}
\]
Similar,
\[
\begin{aligned}
\frac{d}{dr} \left( \int_{B_1} \Phi(r u_r) \dx \right) &=  \int_{B_1} \Phi'(r u_r) (\nabla u_r \cdot x) \dx, \\
-\frac{d}{dr} \int_{\partial B_1} u_r^2 \dsi &= -\tfrac{2}{r} \int_{\partial B_1} u_r (\nabla u_r \cdot x - u_r) \dsi.
\end{aligned}
\]
Summing and rearranging terms, we find
\[
\frac{d}{dr}\WW(u,x_0,r) = \tfrac{2}{r} \int_{\partial B_1} (\nabla u_r \cdot x - u_r)^2 \dsi  + \tfrac{1}{r} \int_{B_1} u_r \Phi_{1/r}' (u_r).
\]
Changing variables $x \to \frac{x-x_0}{r}$, \eqref{eq:WeissDerivative} follows.
\end{proof}
\begin{proof}[Proof of Proposition \ref{prop:LimitBlowDown}] By scaling, $\{u_{\vep_j}\}_{j \in \NN}$ is a family of minimizers of \eqref{eq:Energy} in $\RR^N$ and thus, by Lemma \ref{lem:ConvFunctionals}, Lemma \ref{lem:HaussConvGammaWeak}, Corollary \ref{lem:HaussConvGammaWeak1} and using a standard diagonal argument, we deduce the existence of sequences $\vep_\ell = \vep_{j_\ell},\dd_l \to 0$ and a minimizer $u_0$ of \eqref{eq:Energy1Phase} in $\RR^N$ with $0 \in \partial\{u_0 > 0\}$ such that \eqref{eq:ConvToHalhPlaneSol} and \eqref{eq:1DSymAss2XXbis} are satisfied. The fact that $u_0$ is nontrivial follows by uniform non-degeneracy (Lemma \ref{lem:NonDegEst}).

We are left to show that $u_0$ is $1$-homogeneous. To see this, we use Weiss' monotonicity formula. For every $\vep \in (0,1)$, we consider the function
\[
r \to \WW_\vep(u_\vep,r) := r^{-N} \int_{B_r} |\nabla u_\vep|^2 + \Phi_\vep(u_\vep) \dx - r^{-1-N} \int_{\partial B_r} u_\vep^2 \dsi.
\]
Noticing that $\WW_\vep(u_\vep,r) = \WW(u,r/\vep)$, we easily compute
\[
\frac{d}{dr} \WW_\vep(u_\vep,r) = \frac{1}{\vep} \frac{d}{dr} \WW(u,r/\vep) = 2r^{-N} \int_{\partial B_r} \left( \partial_n u_\vep - \frac{u_\vep}{r} \right)^2 \rd\sigma + r^{-1-N} \int_{B_r} u_\vep \Phi_\vep'(u_\vep) \,\rd\sigma,
\]
and thus, integrating and neglecting the second term in the r.h.s., we deduce
\begin{equation}\label{eq:PropWeissIneqFinal}
\WW(u,R/\vep) - \WW(u,\varrho/\vep) \geq 2 \int_\varrho^R r^{-N} \int_{\partial B_r} \left( \partial_n u_\vep - \frac{u_\vep}{r} \right)^2 \rd\sigma \rd r,
\end{equation}
for every $0 < \varrho < R$ fixed. On the other hand, since $u$ is globally Lipschitz and $\Phi \leq 1$, we have
\[
\WW(u,r) \leq r^{-N} \int_{B_r} |\nabla u_\vep|^2 + \Phi_\vep(u_\vep) \dx \leq  c_N (1 + \|\nabla u\|_{L^\infty(\RR^N)}) < +\infty, \quad \forall r>0.
\]
This,  together with the monotonicity $r \to \WW(u,r)$, yields $\WW(u,r) \to l$ as $r \to +\infty$, for some $l < +\infty$ (depending on $u$). Consequently, taking $\vep = \vep_\ell$ and passing to the limit as $\ell \to +\infty$ in \eqref{eq:PropWeissIneqFinal}, we obtain by $H_{loc}^1$ and $C_{loc}^\alpha$ convergence
\[
\int_\varrho^R r^{-N} \int_{\partial B_r} \left( \partial_n u_0 - \frac{u_0}{r} \right)^2 \rd\sigma \rd r = 0.
\]
By the arbitrariness of $\varrho$ and $R$, it follows $\partial_n u_0 = \frac{u_0}{r}$ in $\partial B_r$, for every $r > 0$, that is, $u_0$ is $1$-homogeneous.
\end{proof}
\begin{proof}[Proof of Proposition \ref{prop:main}] Let $\{R_j\}_{j\in\NN}$ be any sequence satisfying $R_j \to +\infty$ as $j \to +\infty$, and let $\vep_j := \tfrac{1}{R_j}$. Let $\vep_{j_\ell}$, $\dd_\ell$, $u_{\vep_{j_\ell}}$ and $u_0$ as in Proposition \ref{prop:LimitBlowDown} and $R_{j_\ell} := \tfrac{1}{\vep_l}$. Then, since $u_0$ is 1 homogeneous, \eqref{eq:1DSymAss1XX} and \eqref{eq:1DSymAss2XX} follow by scaling back to $u$ into \eqref{eq:ConvToHalhPlaneSol} and \eqref{eq:1DSymAss2XXbis}.
\end{proof}
%
%
%
%
%
%%%%%%%%%%%%%%%%%%%%%%%%%%%%%%%%%%%%%%%%%%%%%%%%%%%%%%%%%%%%%%%%%%%%%%%%%%%%%%%%%%%%

%%%%%%%%%%%%%%%%%%%%%%%%%%%%%%%%%%%%%%%%%%%%%%%%%%%%%%%%%%%%%%%%%%%%%%%%%%%%%%%%%%%%

%
%
%
\section{Improvement of flatness}\label{Sec:ImpFlat}
This section is devoted to the proof of Theorem \ref{lem:ImpFlat3}. As mentioned in the introduction, its proof can be regarded as a suitable ``interpolation'' of  the methods by De Silva \cite{DeSilva2011:art} and Savin \cite{Savin2009:art}, and requires some auxiliary results: a uniform H\"older type estimate given in Lemma \ref{lem:ImpFlat1} and Lemma \ref{lem:ImpFlat2}, and a compactness result provided by  Lemma \ref{lem:ImpFlat2.1}. Further, we will crucially use the 1D solutions studied in Lemma \ref{lem:1DSolution} and their truncations (cf. Remark \ref{rem:Choicettau}).
\normalcolor

\begin{defn}\label{def:Fla1Flat2}
Let  $u_\vep$ be a critical point of \eqref{eq:Energy} in $B_R\subset \RR^N$.

\begin{itemize}
\item We say that $u_\vep$ satisfies ${\rm Flat_1}( \nu, \delta, R)$ if
\begin{equation}\label{flat1}
\begin{array}{rlll}
{} &\hspace{-5pt} u_\vep(x) - \nu\cdot x \leq \delta R \quad &\mbox{in }& B_R \cap \{u_\vep \geq \vartheta_1\vep\}\\
-\delta R \leq&\hspace{-5pt}  u_\vep(x) - \nu\cdot x \quad   &\mbox{in }&B_R. %\cap \{u_\vep \geq 0\}.
\end{array}
\end{equation}
%\begin{eqnarray*}
 %u_\vep(x) - \nu\cdot x \leq \delta R \quad &\mbox{in }& B_R \cap \{u_\vep \geq \vartheta_1\vep\}\\
%-\delta R \leq u_\vep(x) - \nu\cdot x \quad&\mbox{in }& B_R \cap \{u_\vep \geq 0\}.
%\end{eqnarray*}

\item We say that $u_\vep$ satisfies ${\rm Flat_2}( \nu, \delta, R)$ if
\begin{equation}\label{flat2}
 w_\vep^\vep(\nu\cdot x - \delta R) < u_\vep(x) <  w_\vep^{-\vep}(\nu\cdot x + \delta R) \quad \mbox{in } B_R.
\end{equation}
\end{itemize}
\end{defn}
\begin{lem}\label{lem:ImpFlat2tris} There exist $\vep_0,\dd_0 \in (0,1)$ depending only on $\vth_1$, $\vth_2$ and $c_1 > 0$ as \eqref{eq:AssumptionsPhi},   such that for every $R > 0$, every $\nu \in \Sf^{N-1}$, every $\vep/R \in (0,\vep_0)$, $\dd \in [0,\dd_0)$ and every critical point $u_\varepsilon$ of \eqref{eq:Energy} in $B_R$, we have 
\begin{equation}\label{eq:Flat12}
u_\vep \text{ satisfies } {\rm Flat_1}(\nu, \delta, R) \Rightarrow  u_\vep \text{ satisfies } {\rm Flat_2}(\nu, \delta+ \sqrt{\vep/R}, (1 - \sqrt{\vep/R})R),
\end{equation}
\begin{equation}\label{eq:Flat21}
 u_\vep \text{ satisfies } {\rm Flat_2}(\nu, \delta, R) \Rightarrow  u_\vep \text{ satisfies } {\rm Flat_1}(\nu, \delta+ \sqrt{\vep/R} , (1 - \sqrt{\vep/R})R).
\end{equation}
\end{lem}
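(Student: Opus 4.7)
The plan is to compare the two notions of flatness by direct pointwise comparison, using the fact that the 1D profiles $w_\vep^{\pm\vep}$ are uniformly close to the identity at the relevant scale. By a rotation I will assume $\nu = e_N$. The key quantitative input will be Lemma~\ref{lem:1DSolutionCompxwvep} with $\sigma = 1/2$, which, once rescaled to scale $R$ (equivalently, applied to $\tilde u(y) = u_\vep(Ry)/R$ in $B_1$ with small parameter $\tilde\vep = \vep/R$), says that shifts of $w_\vep^{\pm\vep}$ by $(\delta + \sqrt{\vep/R})R$ dominate the corresponding shifts of the identity by $\delta R$ with slack of order $\tfrac{1}{2}\sqrt{\vep R}$.

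\textbf{Proving direction \eqref{eq:Flat12}.} Assume Flat$_1(\nu,\delta,R)$ and set $\delta' = \delta + \sqrt{\vep/R}$, $R' = (1-\sqrt{\vep/R})R$. I will prove the lower Flat$_2$ inequality $w_\vep^\vep(\nu\cdot x - \delta' R') < u_\vep(x)$ in $B_{R'}$ by combining the global Flat$_1$ lower estimate $u_\vep \geq \nu\cdot x - \delta R$ with the first inequality of Lemma~\ref{lem:1DSolutionCompxwvep}(i), which forces $w_\vep^\vep(y - \delta' R') \leq y - \delta R$ once $\delta' R' - \delta R \geq \tfrac12 \sqrt{\vep R}$ (and the argument stays to the right of $x_\vep^\vep$). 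For the upper Flat$_2$ inequality $u_\vep(x) < w_\vep^{-\vep}(\nu\cdot x + \delta' R')$ I will split $B_{R'}$ into two regions. On $\{u_\vep \geq \vth_1\vep\}$ the Flat$_1$ upper bound gives $u_\vep \leq \nu\cdot x + \delta R$, and the second inequality of Lemma~\ref{lem:1DSolutionCompxwvep}(ii) closes the chain. On the complement $\{u_\vep < \vth_1\vep\}$ I will invoke Lemma~\ref{lem:ExpDecayu}: at points whose $\vep^{3/4}$-neighborhood lies in $\{u_\vep \leq \vth_1\vep\}$, the exponential bound $u_\vep \leq 3\vth_1 \vep e^{-\vep^{-1/4}/(4\sqrt{c_1})}$ is far below the strictly positive minimum $v_\vep^{-\vep}(y_\vep^{-\vep}) \gtrsim \vep\sqrt\vep$ of $w_\vep^{-\vep}$; the thin transition band of width $O(\vep^{3/4})$ around $\{u_\vep = \vth_1\vep\}$ will be covered using the uniform Lipschitz bound of Proposition~\ref{cor:LipEst} together with the fact that $w_\vep^{-\vep}$ is already above $\vth_2\vep$ in its linear regime there.

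\textbf{Proving direction \eqref{eq:Flat21} and the main obstacle.} This direction is the easier of the two. On $B_{R'} \cap \{u_\vep \geq \vth_1\vep\}$, Flat$_2$ gives $u_\vep < w_\vep^{-\vep}(\nu\cdot x + \delta R)$, and the first inequality of Lemma~\ref{lem:1DSolutionCompxwvep}(ii) converts this into $u_\vep < \nu\cdot x + \delta R + \tfrac12\sqrt{\vep R} \leq \nu\cdot x + \delta' R'$; the hypothesis $\nu\cdot x + \delta R > y_\vep^{-\vep}$ required to apply the lemma is automatic on $\{u_\vep \geq \vth_1\vep\}$, since otherwise $w_\vep^{-\vep}(\nu\cdot x + \delta R) < \vth_1\vep \leq u_\vep$. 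The Flat$_1$ lower estimate on all of $B_{R'}$ follows in parallel from the Flat$_2$ lower inequality and the second inequality of Lemma~\ref{lem:1DSolutionCompxwvep}(i), which holds throughout $(-R, R)$. The genuinely non-routine step in the entire proof is the control on $\{u_\vep < \vth_1 \vep\}$ in direction \eqref{eq:Flat12}: Flat$_1$ provides no upper bound on $u_\vep$ there, and $w_\vep^{-\vep}$ itself dips below $\vth_1\vep$ in its transition region, so the comparison must be rescued by the exponential decay of Lemma~\ref{lem:ExpDecayu}, with careful bookkeeping of the $\vep^{3/4}$-wide transition band being the most delicate point.
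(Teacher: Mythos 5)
Your proposal is correct and follows essentially the same route as the paper: rescale to $R=1$ and rotate to $\nu=e_N$, convert between the two flatness notions via Lemma \ref{lem:1DSolutionCompxwvep} with $\sigma=1/2$ (with the same observation in the Flat$_2\Rightarrow$Flat$_1$ direction that $u_\vep\ge\vth_1\vep$ forces the argument of $w_\vep^{-\vep}$ past $y_\vep^{-\vep}$), and handle $\{u_\vep<\vth_1\vep\}$ away from $\{u_\vep\ge\vth_1\vep\}$ by playing the exponential decay of Lemma \ref{lem:ExpDecayu} against the lower bound \eqref{eq:1DEvenSolProp}, treating the $\vep^{3/4}$-wide band separately. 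The only cosmetic difference is in that band: the paper transfers the bound $w_\vep^{-\vep}\ge \sqrt{\vep}/2$ from a nearby point of $\{u_\vep\ge\vth_1\vep\}$ using that $w_\vep^{-\vep}$ is $1$-Lipschitz, whereas you argue $w_\vep^{-\vep}\ge\vth_2\vep$ there; note the Lipschitz estimate of Proposition \ref{cor:LipEst} is not actually needed, since $u_\vep<\vth_1\vep$ on that band by definition.
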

\begin{proof} Let $\vep_0 \in (0,1)$ as in Lemma \ref{lem:1DSolutionCompxwvep}, $\vep \in (0,\vep_0)$, and set $U_\vep := \{u_\vep \geq \vartheta_1\vep\}$. By scaling, we may assume $R =1$ while, up to a rotation of the coordinate system, we can set $\nu = e_N$.

\smallskip

\emph{Step 1.} Let us prove first \eqref{eq:Flat12}. Assume that $u_\vep$ satisfies ${\rm Flat_1}(\nu, \delta, 1)$, as defined in \eqref{flat1}. 
On the one hand we have $u_\vep(x) \geq x_N -\delta$ in $B_1$. Then, by the first inequality in \eqref{eq:1DSolCompxwvep} with $\sigma = 1/2$, we have
\begin{equation}\label{eq:EqBoundsPlaneSolSub}
u_\vep(x) \geq x_N -\delta \geq w_\vep^\vep(x_N - \delta - \sqrt{\vep}) \quad \text{ in } B_1 \cap \{w_\vep^\vep(x_N - \delta - \sqrt{\vep}) > 0 \}.
\end{equation}
Further, since $u_\vep \geq 0$, the same inequality holds true in $B_1 \cap \{w_\vep^\vep(x_N - \delta - \sqrt{\vep}) = 0 \}$ and the first inequality in \eqref{eq:Flat12} follows.

To show the second inequality, we use that, on the other hand,  $u_\vep(x) \leq x_N +\delta$ in $B_1 \cap U_\vep$. Then, by the second inequality in \eqref{eq:1DSolCompxwvep1}, we have
\begin{equation}\label{eq:ImpFlat2trisBoundAbove}
u_\vep(x) \leq x_N +\delta \leq w_\vep^{-\vep}(x_N + \delta + \sqrt{\vep}) - \tfrac{\sqrt{\vep}}{2} \quad \text{ in } B_1 \cap U_\vep.
\end{equation}
Now notice that by Lemma \ref{lem:ExpDecayu} (cf. \eqref{eq:ExpDecayu}), we have
\begin{equation}\label{eq:ImpFlat3ExpBound11}
u_\vep \leq 3\vartheta_1 \varepsilon \, e^{-\frac{\vep^{-1/4}}{4 c_1^{1/2}} } \quad \text{ in } B_{1-\sqrt{\vep}} \setminus V_\vep, \qquad V_\vep := B_1\cap \{x : d(x,U_\vep) \leq \vep^{3/4}\}.
\end{equation}
Thanks to \eqref{eq:1DEvenSolProp} (with $|\tau| = \varepsilon$), we also know that $w_\vep^{-\varepsilon}(x_N + \delta + \sqrt{\vep}) \geq \tfrac{1}{\sqrt{c_1}} \varepsilon^{3/2}$ and thus by \eqref{eq:ImpFlat3ExpBound11}
\[
u_\vep < w_\vep^{-\varepsilon}(x_N + \delta + \sqrt{\vep}) \quad \text{ in } B_{1-\sqrt{\vep}} \setminus V_\vep,
\]
for every $\vep \in (0,\vep_0)$, taking eventually $\vep_0$ smaller. 
 We are left to check that  $u_\vep(x) <  w_\vep^{-\vep}( x_N + \delta+ \sqrt \vep)$ in $B_{1-\sqrt{\vep}} \cap (V_\vep \setminus U_\vep)$. Let $x\in B_{1-\sqrt{\vep}} \cap (V_\vep \setminus U_\vep)$. 
 Let $\bar x\in B_1\cap U_\vep$ such that  $|x-\bar x| \le  \vep^{3/4}$. From  \eqref{eq:ImpFlat2trisBoundAbove}  (using $u_\vep\ge0$) we know that $w_\vep^{-\vep}(\bar x_N + \delta + \sqrt{\vep}) \geq \sqrt{\vep}/2$. 
 Hence (using that $w_\vep^{-\vep}$ is 1-Lipschitz), 
 \[
 u_\vep(x) \le \vartheta_1 \vep  \le\sqrt \vep/2 -\vep^{3/4}\le  w_\vep^{-\varepsilon}(\bar x_N + \delta + \sqrt{\vep})- \vep^{3/4} \le w_\vep^{-\varepsilon}(x_N + \delta + \sqrt{\vep}).
 \]
This completes the proof of \eqref{eq:Flat12}.

\smallskip

\emph{Step 2.} Now we show \eqref{eq:Flat21}. Assume $u_\vep(x) > w_\vep^\vep(x_N - \delta)$ in $B_1$. Then, by the second inequality in \eqref{eq:1DSolCompxwvep} (with $\sigma = 1/2$), we obtain
\[
u_\vep(x) > w_\vep^\vep(x_N - \delta) > x_N -\dd - \tfrac{\sqrt{\vep}}{2} > x_N -\dd - \sqrt{\vep} \quad \text{ in } B_1,
\]
and the first inequality in \eqref{eq:Flat21} follows. On the other hand, if $u_\vep(x) < w_\vep^{-\vep}(x_N + \delta)$ in $B_1$, the first inequality in \eqref{eq:1DSolCompxwvep1} yields
\[
u_\vep(x) < w_\vep^{-\vep}(x_N + \delta) < x_N + \dd + \tfrac{\sqrt{\vep}}{2} < x_N + \dd + \sqrt{\vep} \quad \text{ in } B_1\cap\{x_N \geq y_\vep^{-\vep} - \delta\},
\]
where $y_\vep^{-\vep}$ is as in Lemma \ref{lem:1DSolutionCompxwvep}. Finally, since $u_\vep(x) \geq \vth_1\vep$ and the assumption imply $w_\vep^{-\vep}(x_N + \delta) \geq \vth_1\vep$, we deduce, by monotonicity, that $x_N + \dd \geq 0 \geq y_\vep^{-\vep}$ in  $U_\vep=\{u_\ep\ge \vth_1\vep\}$. Thus $B_1 \cap U_\vep \subset B_1\cap\{x_N \geq y_\vep^{-\vep} - \delta\}$ and the second inequality in \eqref{eq:Flat21} follows too.
\end{proof}
\begin{lem}\label{lem:ImpFlat1}
There exist $\delta_0,c_0 \in (0,1)$ and $\theta_0 \in (\tfrac{1}{2},1)$ depending only on $N$, $\vartheta_1$, $\vartheta_2$ and $c_1$ as in \eqref{eq:AssumptionsPhi} such that for every $R > 0$, every $\delta \in (0,\delta_0)$, every $a \in \mathbb{R}$ and $b \leq 0$ such that $a + |b| = \delta R$, every $\varepsilon/R \in (0,c_0 \delta)$ and every critical point $u_\varepsilon$ of \eqref{eq:Energy} in $B_R$ satisfying
\begin{equation}\label{eq:FlatCondTruncSol1D}
\begin{aligned}
&w_\varepsilon^\varepsilon(x_N - a) \leq u_\varepsilon(x) \leq w_\varepsilon^{-\varepsilon}(x_N - b) \quad \text{ in } B_R, \\
&u_\vep(0) \in [\vartheta_1\vep, \vartheta_2\vep] , 
\end{aligned}
\end{equation}
where $w_\varepsilon^\varepsilon$ and $w_\varepsilon^{-\varepsilon}$ are as in Remark \ref{rem:Choicettau} with $w_\varepsilon^\varepsilon(0) = w_\varepsilon^{-\varepsilon}(0) = \vartheta_1\varepsilon$, then there exist $a' \in \mathbb{R}$, $b'\leq 0$ such that 
\begin{equation}\label{eq:ImpFlat1ImpClos}
\begin{aligned}
&w_\varepsilon^\varepsilon(x_N - a') \leq u_\varepsilon(x) \leq w_\varepsilon^{-\varepsilon}(x_N - b') \quad \text{ in } B_{R/4}, \\
&b \leq b' \leq a' \leq a,  \\
&a' + |b'| \leq \theta_0(a + |b|).
\end{aligned}
\end{equation}
\end{lem}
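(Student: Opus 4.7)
The proof follows the standard \emph{improvement of oscillation} scheme of De Silva \cite{DeSilva2011:art}, adapted to the present semilinear setting by replacing shifts of the plane $(x_N)_+$ with shifts of the 1D barriers $w_\vep^{\vep}$ and $w_\vep^{-\vep}$. After rescaling so that $R = 1$, the hypotheses read $a + |b| = \delta$ with $\delta$ small and $\vep \ll \delta$, and the goal is to reduce either $a$ or $|b|$ by a fixed universal fraction of $\delta$ (the other parameter being left unchanged).

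Pick an interior reference point $z := \tfrac12 e_N$. By Lemma \ref{lem:1DSolution}(ii)--(iii) together with Lemma \ref{lem:1DSolutionBoundFB}, both barriers are in their linear regime at $z$:
\begin{equation*}
w_\vep^{\vep}(z_N - a) = z_N - a + O(\vep), \qquad w_\vep^{-\vep}(z_N - b) = z_N - b + O(\vep),
\end{equation*}
so the gap between them at $z$ equals $a + |b| + O(\vep) = \delta + O(\vep)$. Since $\vep < c_0 \delta$ with $c_0$ chosen small enough, the pigeonhole principle yields at least one of
\begin{equation*}
u_\vep(z) \ge w_\vep^{\vep}(z_N - a) + \tfrac{\delta}{3} \qquad\text{or}\qquad u_\vep(z) \le w_\vep^{-\vep}(z_N - b) - \tfrac{\delta}{3}.
\end{equation*}
We treat the first alternative; the second is symmetric.

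Set $v(x) := u_\vep(x) - w_\vep^{\vep}(x_N - a)$, which is nonnegative on $B_1$ by the flatness hypothesis. On the open set $U := B_{3/4} \cap \{u_\vep > \vth_2 \vep\}$, both $u_\vep$ and the function $x \mapsto w_\vep^{\vep}(x_N - a)$ are harmonic (by the definition of $\vth_2$ in \eqref{eq:AssumptionsPhi} and the form of $w_\vep^{\vep}$ above its $\vth_2\vep$-level), so $\Delta v = 0$ in $U$. Because the flatness forces the interface strip of $u_\vep$ to have height $O(\delta + \vep) \ll 1$, the point $z$ lies in $U$ and can be connected to any point of $U \cap \overline{B_{1/2}}$ by a chain of balls compactly contained in $U$ whose length is controlled by a universal constant. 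Running Harnack's inequality along this chain yields
\begin{equation*}
v \ge c\, \delta \quad \text{in } U \cap B_{1/2}
\end{equation*}
for some universal $c > 0$. Since $w_\vep^{\vep}$ is $(1+\vep)$-Lipschitz and its slope on $\{w_\vep^{\vep} \ge \vth_2\vep\}$ is bounded below by $1$, this translates into the shifted inequality $u_\vep(x) \ge w_\vep^{\vep}(x_N - a + c'\delta)$ in $U \cap B_{1/2}$, for some universal $c' > 0$. Setting $a' := a - c'\delta$ and $b' := b$, this bound extends automatically to the whole of $B_{1/4}$: wherever $w_\vep^{\vep}(x_N - a') = 0$ it reduces to $u_\vep \ge 0$, while in the thin transition strip $\{0 < w_\vep^{\vep}(x_N - a') < \vth_2\vep\}$ of thickness $O(\vep)$ (by Lemma \ref{lem:1DSolutionBoundFB}) it follows from the 1D ODE structure combined with the estimate already obtained on the upper boundary of the strip. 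This yields \eqref{eq:ImpFlat1ImpClos} with $\theta_0 := 1 - c' < 1$.

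The main technical obstacle lies in the Harnack step, because $v$ is harmonic only in $U$: outside $U$ it satisfies $\Delta v = \tfrac12[\Phi'_\vep(u_\vep) - \Phi'_\vep(w_\vep^{\vep}(x_N - a))]$, which linearizes to a Schr\"odinger-type equation whose potential is of size $O(\vep^{-2})$ inside the interface strip. We bypass this by performing the Harnack chain entirely inside the harmonic region $U$ (whose connectedness and ``fatness'' are guaranteed by the flatness hypothesis), and then transferring the pointwise shift to the whole of $B_{1/4}$ via the 1D structure of $w_\vep^{\vep}$ together with the exponential decay of $u_\vep$ in $\{u_\vep \le \vth_1\vep\}$ provided by Lemma \ref{lem:ExpDecayu}.
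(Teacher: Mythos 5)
Your first half (pigeonhole at an interior point plus a Harnack lift of $v=u_\vep-w^{\varepsilon,a}$ in the region where $v$ is harmonic) is essentially the paper's first step, but the proposal has a genuine gap exactly at the point where the lemma is hard: the transfer of the improved bound across the transition strip down to all of $B_{R/4}$. Your sentence that in $\{0<w_\vep^\vep(x_N-a')<\vth_2\vep\}$ the inequality ``follows from the 1D ODE structure combined with the estimate already obtained on the upper boundary of the strip'' is not an argument. In that strip $u_\vep$ and the shifted barrier both solve the semilinear equation with a potential of size $\vep^{-2}$, $\Phi'$ is not monotone on $[0,\vth_2\vep]$, and the natural comparison domain $\{u_\vep<\vth_2\vep\}\cap B_{1/2}$ has a lateral piece on $\partial B_{1/2}$ (a band of height $\sim c'\delta$ around $x_N\approx a$) where you only know the \emph{unimproved} ordering $u_\vep\ge w^{\varepsilon,a}$; so no comparison principle applies directly. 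This is precisely why the paper introduces the sliding deformation $v_\lambda(x)=w^{\varepsilon,a}(x_N+\lambda h(x))$ with $h$ harmonic in an annulus and $\partial_N h>0$ in $D=B_{1/2}(y)\cap\{x_N<\tfrac1{16}\}$: by \eqref{eq:ImpFlat1Eqvt} each $v_\lambda$ is a \emph{strict} subsolution in $D$, interior touching points are excluded, and the boundary ordering is known on $\partial B_{1/2}(y)$ (where $h=0$, so it reduces to the hypothesis) and on $\{x_N=\tfrac1{16}\}$ (where the Harnack lift \eqref{eq:ImpFlat1DeltaLift} applies); this yields \eqref{eq:ImpFlat1IneqBelow1}. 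Some substitute for this device is indispensable, and invoking Lemma \ref{lem:ExpDecayu} does not help: exponential smallness of $u_\vep$ in $\{u_\vep\le\vth_1\vep\}$ is an upper bound and cannot produce the lower bound $u_\vep\ge w_\vep^\vep(x_N-a')$ there.

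A second, smaller but real problem is your Harnack step itself: you claim $v\ge c\delta$ on all of $U\cap B_{1/2}$ with $U=\{u_\vep>\vth_2\vep\}$ and a universal $c$, via Harnack chains inside $U$. Points of $U\cap B_{1/2}$ can be arbitrarily close to $\partial U$, where any chain needs unboundedly many balls, so the constant degenerates and no uniform lower bound follows. The correct (and sufficient) statement is the one the paper proves: the lift holds at a fixed distance from the strip, e.g.\ in $B_{15/16}\cap\{|x_N|\ge \tfrac1{16}\}$, using that the flatness hypothesis confines $\{\vth_1\vep\le u_\vep\le\vth_2\vep\}$ to $\{|x_N|<\tfrac1{32}\}$, and the propagation below that level is then done by the sliding argument described above. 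Finally, ``the second case is symmetric'' hides a small asymmetry (for the upper barrier the deformation parameter is negative and one needs $2\partial_Nh+\lambda|\nabla h|^2>0$, i.e.\ $\delta_0$ small), but that is minor compared with the missing comparison argument.
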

\begin{proof} By scaling, we may assume $R = 1$. Set $u = u_\varepsilon$, $w^\varepsilon = w_\varepsilon^\varepsilon$, $w^{-\varepsilon} = w_\varepsilon^{-\varepsilon}$, and define
\[
w^{\varepsilon,a}(x_N) := w^\varepsilon(x_N - a), \qquad w^{-\varepsilon,b}(x_N) := w^{-\varepsilon}(x_N - b).
\]
Notice that, up to replace $\delta$ with $\delta + 1/j$ and then taking the limit as $j \to +\infty$, we may assume
\[
w^{\varepsilon,a} < u < w^{-\varepsilon,b} \quad \text{ in } B_1,
\]
and, since $0 \in \{\vth_1\vep \leq u_\vep \leq \vth_2\vep\}$, we also have
\begin{equation}\label{eq:ImpFlat1Boundab}
a \geq -c\varepsilon, \qquad |b| \leq \delta + c\varepsilon,
\end{equation}
where $c > 1$ is as in Lemma \ref{lem:1DSolutionBoundFB}. This can be easily verified since $w^\varepsilon(0) = \vartheta_1\varepsilon$ and $\{\vartheta_1\varepsilon \leq w^\varepsilon \leq \vartheta_2\varepsilon \} \subset \{|x_N| \leq c\varepsilon\}$ by Lemma \ref{lem:1DSolutionBoundFB}.

We define
\begin{equation}\label{eq:ImpFlat1Conddelta0}
\delta_0 := \tfrac{1}{32}, \qquad c_0 := \tfrac{1}{16c}, \qquad \theta_0 := 1 - c_N,
\end{equation}
where $c > 1$ is as in Lemma \ref{lem:1DSolutionBoundFB} respectively (depending only on $\vartheta_1$, $\vartheta_2$ and $c_1 > 0$ in \eqref{eq:AssumptionsPhi}), and $c_N \in (0,1)$ is the dimensional constant appearing in \eqref{eq:ImpFlat1IneqBelow1} (notice that we may assume $\theta_0 > \tfrac{1}{2}$ taking eventually $c_N$ smaller). In particular, since $\delta \in (0,\delta_0)$, we have
\begin{equation}\label{eq:ImpFlat1BoundGamma}
\{\varepsilon \vartheta_1 \leq u \leq \varepsilon\vartheta_2 \} \subset \left\{|x_N| < \tfrac{1}{32} \right\}.
\end{equation}

\smallskip 

Fix $y = (y',y_N) = (0,\tfrac{1}{8})$. We consider the following alternative. Either: 
\begin{enumerate}
\item[(a)]  \quad $w^{-\varepsilon,b}(y) - u(y) \le u(y) - w^{\varepsilon,a}(y)$
\end{enumerate}
or 
\begin{enumerate}
\item[(b)]  \quad $w^{-\varepsilon,b}(y) - u(y) \ge u(y) - w^{\varepsilon,a}(y)$
\end{enumerate}

\smallskip

\emph{First case.} Assume (a) holds. We first prove that
\begin{equation}\label{eq:ImpFlat1DeltaLift}
u > w^{\varepsilon,a - c_N\delta}  \quad \text{ in } B_{15/16} \cap \{|x_N| \geq \tfrac{1}{16}\},
\end{equation}
for some $c_N \in (0,1)$. Let $v := u - w^{\varepsilon,a}$. In view of \eqref{eq:ImpFlat1BoundGamma}, $v$ is harmonic and positive in $B_1 \cap \{|x_N| > \tfrac{1}{32}\}$ and so, by the Harnack inequality, it follows
\[
\inf_{B_{15/16} \cap \{|x_N| \geq 1/16\}} v \geq 4c_N v(y) \geq 2c_N [w^{-\varepsilon,b}(y) - w^{\varepsilon,a}(y)] \geq c_N\delta,
\]
for some $c_N > 0$. To justify the last inequality we proceed as follows. If $\tilde{a} > a$ and $\tilde{b} > b$ are such that $w^{\varepsilon,\delta}(\tilde{a}) = w^{-\varepsilon,-\delta}(\tilde{b}) = \vartheta_2\varepsilon$, then $|\tilde{a} - a| \leq c\varepsilon$, $|\tilde{b} - b| \leq c\varepsilon$ where $c > 0$ is the constant appearing in the statement of Lemma \ref{lem:1DSolutionBoundFB}. Consequently, since $w^{\varepsilon,a}(y) = \vartheta_2\varepsilon + (1+\varepsilon)(y_N - \tilde{a})$, $w^{-\varepsilon,b}(y) = \vartheta_2\varepsilon + (1- \varepsilon)(y_N - \tilde{b})$ and $c > 1$, we find
\[
\begin{aligned}
w^{-\varepsilon,b}(y) - w^{\varepsilon,a}(y) &= (1-\varepsilon)(y_N - \tilde{b}) - (1+\varepsilon)(y_N - \tilde{a}) = \tilde{a} - \tilde{b} - \tfrac{\varepsilon}{4} + \varepsilon(\tilde{a} + \tilde{b}) \\
& \geq \delta - 2c\varepsilon - \tfrac{\varepsilon}{4} + \varepsilon(a+b -2c\varepsilon) \geq \delta - 6c\varepsilon - \varepsilon - \varepsilon\delta,
\end{aligned}
\]
thanks to \eqref{eq:ImpFlat1Boundab}. Further, recalling that $\varepsilon < c_0\delta$ by assumption, it follows
\[
w^{-\varepsilon,b}(y) - w^{\varepsilon,a}(y) \geq (1 - 8cc_0) \delta > \tfrac{1}{2}\delta,
\]
in view of the definition of $c_0$ in \eqref{eq:ImpFlat1Conddelta0}. As a consequence, $u \geq w^{\varepsilon,a} + c_N\delta$ in $B_{15/16} \cap \{|x_N| \geq \tfrac{1}{16}\}$ and thus, using that $w^{\varepsilon,a}$ is a line with slope $1+\varepsilon$ in $\{w^{\varepsilon,a} > \vartheta_2\varepsilon\}$ and $\varepsilon < 1$, we deduce \eqref{eq:ImpFlat1DeltaLift}.

The second step is to show
\begin{equation}\label{eq:ImpFlat1IneqBelow1}
u \geq w^{\varepsilon,a - c_N\delta} \quad \text{ in } B_{1/4},
\end{equation}
for some new $c_N \in (0,1)$. If \eqref{eq:ImpFlat1IneqBelow1} holds true, then \eqref{eq:ImpFlat1ImpClos} follows by setting $a' = a - c_N\delta$, $b' = b$, in view of the definition of $\theta_0$.

To prove \eqref{eq:ImpFlat1IneqBelow1} we use a sliding argument: given any smooth, nonnegative and bounded $h$, we define the family of functions
\[
v_\lambda(x) := w^{\varepsilon,a}(x_N + \lambda h(x)), \quad x \in B_1, \quad \lambda \in [0,c_N\delta].
\]
Notice that $v_0 = w^{\varepsilon,a}$. Using the equation of $w^\varepsilon$, it is not difficult to check that
\begin{equation}\label{eq:ImpFlat1Eqvt}
\Delta v_\lambda = \tfrac{1}{2} \Phi_\varepsilon'(v_\lambda) \left(1 + 2\lambda \partial_N h + \lambda^2|\nabla h|^2 \right) + \lambda \dot w^\varepsilon \Delta h,
\end{equation}
where $\partial_N := \partial_{x_N}$. We choose $h(x) := \tilde h(x-y)$, where $\tilde{h}$ is the unique radially decreasing harmonic function in $B_{1/2} \setminus B_{1/32}$ satisfying $\tilde{h} = 1$ in $\overline{B}_{1/32}$ and $\tilde{h} = 0$ in $\mathbb{R}^N\setminus B_{1/2}$. Consequently,
\begin{equation}\label{eq:ImpFlat1Ineqvt}
\Delta v_\lambda > \tfrac{1}{2} \Phi_\varepsilon'(v_\lambda) \quad \text{ in } D := B_{1/2}(y) \cap \{x_N < \tfrac{1}{16}\},
\end{equation}
for every $\lambda \in (0,c_N\delta]$. This follows neglecting the nonnegative terms in \eqref{eq:ImpFlat1Eqvt} and noticing that $\partial_N h > 0$ in $D$ by construction. On the other hand,
\begin{equation}\label{eq:ImpFlat1BoundDvt}
v_\lambda < u \quad \text{ in } \partial D,
\end{equation}
for every $\lambda \in [0,c_N\delta]$. Indeed, recalling that $h = 0$ in $\partial B_{1/2}(y)$ it follows $v_\lambda = w^{\varepsilon,a} < u$ in $\partial D \cap \{ x_N < \tfrac{1}{16} \}$ while, since $h \leq 1$ and $\lambda \leq c_N\delta$, we have $v_\lambda \leq w^{\varepsilon,a - c_N\delta}$ and so $v_\lambda < u$ in $\partial D \cap \{ x_N = \tfrac{1}{16} \}$ in view of \eqref{eq:ImpFlat1DeltaLift}. Now, we define
\[
\lambda_\ast := \max\{\lambda \in [0,c_N\delta]: v_\lambda \leq u \text{ in } D \},
\]
and show that $\lambda_\ast = c_N\delta$. If this is not true, there must be $\lambda \in [0,c_N\delta)$ and $x_\lambda \in \overline{D}$ such that $v_\lambda \leq u$ in $D$, with $v_\lambda(x_\lambda) = u(x_\lambda)$. Recalling that $v_0 = w^{\varepsilon,a}$ and that $w^{\varepsilon,a} < u$ by assumption, we immediately see that $\lambda > 0$ and, by \eqref{eq:ImpFlat1BoundDvt}, it must be $x_\lambda \in D$. Thus, using the equation of $u$ (or equivalently $u$) and \eqref{eq:ImpFlat1Ineqvt}, we obtain that the function $\tilde v_\lambda := u - v_\lambda$ satisfies
\[
\begin{cases}
\tilde v_\lambda \geq 0 \quad \text{ in } D \\
\tilde v_\lambda(x_\lambda) = 0, \; \Delta \tilde v_\lambda(x_\lambda) < 0,
\end{cases}
\]
which leads to a contradiction since $x_\lambda \in D$ is a minimum point for $\tilde v_\lambda$. Combining \eqref{eq:ImpFlat1DeltaLift} with $\lambda_\ast = c_N\delta$, and noticing that $B_{1/4} \subset B_{1/2}(y)$, we deduce
\[
u(x) \geq w^\varepsilon(x_N - a + c_N\delta h(x)) \quad \text{ in } B_{1/4},
\]
and thus, since $h \geq c_N$ in $B_{1/4}$ for some new constant $c_N > 0$ by construction, the monotonicity of $w^\varepsilon$ yields \eqref{eq:ImpFlat1IneqBelow1}.

\smallskip

\emph{Second case.} Assume now that (b) holds. In this case, following the proof of  \eqref{eq:ImpFlat1DeltaLift}, we find
\[
u < w^{-\varepsilon, b + c_N\delta}  \quad \text{ in } B_{15/16} \cap \{|x_N| \geq \tfrac{1}{16}\},
\]
where $c_N > 0$ can be taken as in \eqref{eq:ImpFlat1DeltaLift}. So, following the ideas of \emph{Step 1}, we must prove
\begin{equation}\label{eq:ImpFlat1IneqBelow12}
u \leq w^{-\varepsilon,b + c_N\delta} \quad \text{ in } B_{1/4},
\end{equation}
where $c_N \in (0,1)$ is as in \eqref{eq:ImpFlat1IneqBelow1}. As above, \eqref{eq:ImpFlat1IneqBelow12} implies \eqref{eq:ImpFlat1ImpClos} taking $a' = a$ and $b' = b + c_N\delta$.

To do so, we consider
\[
v_\lambda(x) := w^{-\varepsilon,a}(x_N + \lambda h(x)), \quad x \in B_1, \quad \lambda \in [-c_N\delta,0],
\]
where $h$ is as in Step 1 (note however that now $\lambda<0$).
Using \eqref{eq:ImpFlat1Eqvt}, we deduce $\Delta v_\lambda < \frac{1}{2} \Phi_\varepsilon'(v_\lambda)$ in $D$, for every $\lambda \in [-c_N\delta,0)$. To see this, it is enough to notice that
\[
2\partial_N h + \lambda |\nabla h|^2 \geq \overline{c}_N + \lambda |\nabla h|^2 > 0 \quad \text{ in } D,
\]
for some small $\overline{c}_N > 0$, if $|\lambda|$ is small enough and so, choosing eventually $\delta_0$ smaller (depending only on $N$), the above inequality is satisfied for $\lambda \in [-c_N\delta,0)$. Proceeding exactly as above, we find
\[
\lambda_\ast := \min\{\lambda \in [-c_N\delta,0]: v_\lambda \geq u \text{ in } D \} = -c_N\delta,
\]
and we are led to
\[
u(x) \leq w^{-\varepsilon}(x_N - b - c_N\delta) \quad \text{ in } B_{1/4},
\]
for some new $c_N > 0$, which is \eqref{eq:ImpFlat1IneqBelow12}.
\end{proof}
%

%%%

%
%
%
%
\begin{lem}\label{lem:ImpFlat2}
 There exist $\alpha,\tilde{\dd}_0 \in (0,1)$ and $C > 0$ depending only on $N$, $\vartheta_1$, $\vartheta_2$ and $c_1$ as in \eqref{eq:AssumptionsPhi} such that for every $\delta \in (0,\tilde{\delta}_0)$, every $\varepsilon \in (0,\delta^2)$ and every critical point $u_\varepsilon$ of \eqref{eq:Energy} in $B_1$ satisfying
\begin{equation}\label{eq:FlatCondB1}
\begin{array}{rlll}
{} &\hspace{-5pt} u_\vep(x) - x_N \leq \delta \quad &\mbox{in }& B_1 \cap \{u_\vep \geq \vartheta_1\vep\}\\
-\delta \leq&\hspace{-5pt}  u_\vep(x) - x_N \quad   &\mbox{in }&B_1, 
\end{array}
\end{equation}
with $u_\vep(0) \in [\vartheta_1\vep, \vartheta_2\vep]$, then the function
\[
v_{\varepsilon,\delta}(x) := \frac{u_\varepsilon(x) - x_N}{\delta}
\]
satisfies
\begin{equation}\label{eq:ImpFlat2CalphaBoundproof}
\begin{array}{rlll}
{} &\hspace{-5pt} v_{\vep,\dd}(x) - v_{\vep,\dd}(z) \leq  \omega_\dd(x-z) \quad &\mbox{in }& B_{1/2} \cap \{u_\vep \geq \vartheta_1\vep\}\\
- \omega_\dd(x-z) \leq&\hspace{-5pt}  v_{\vep,\dd}(x) - v_{\vep,\dd}(z) \quad   &\mbox{in }&B_{1/2}, 
\end{array}
\end{equation}
for every $z \in B_{1/2} \cap \{u_\vep \geq \vartheta_1\vep\}$, where 
\[
\omega_\dd(y) := C(\dd + |y|)^{\alpha}.
\]
\end{lem}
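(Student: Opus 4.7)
The plan is to combine iterative applications of Lemma \ref{lem:ImpFlat1}---which yield a decay of oscillation of $v_{\vep,\dd}$ at dyadic scales---with interior harmonic function estimates in the region $\{u_\vep > \vth_2 \vep\}$ (where $\Phi'_\vep(u_\vep)=0$ by \eqref{eq:AssumptionsPhi}, so that $u_\vep$ and hence $v_{\vep,\dd}$ are harmonic). This follows the strategy De Silva employed for the one-phase problem \cite{DeSilva2011:art}, adapted to our semilinear framework via the Flat$_2$ formulation (Definition \ref{def:Fla1Flat2}) and the $1D$ barriers from Remark \ref{rem:Choicettau}.

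First, since $\vep<\dd^2$ (so $\sqrt\vep<\dd$), I would apply Lemma \ref{lem:ImpFlat2tris} to convert \eqref{eq:FlatCondB1} (with $\nu=e_N$ after a rotation) into a Flat$_2$ condition in $B_{3/4}$: there exist $a_0\in\RR$ and $b_0\le 0$ with $a_0+|b_0|\le 2\dd$ such that $w_\vep^\vep(x_N-a_0)<u_\vep<w_\vep^{-\vep}(x_N-b_0)$ in $B_{3/4}$. Since $u_\vep(0)\in[\vth_1\vep,\vth_2\vep]$, I would iterate Lemma \ref{lem:ImpFlat1} centered at $0$: at step $k$ one has trapping in $B_{R_k}$ with $R_k:=(3/4)\cdot 4^{-k}$ and $a_k+|b_k|\le \theta_0^k(a_0+|b_0|)$. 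The iteration continues while $\vep<c_0(a_k+|b_k|)$; by $\vep<\dd^2$, this holds up to $k_{\max}$ with $\theta_0^{k_{\max}}\sim \vep/\dd\le\dd$, equivalently $R_{k_{\max}}\sim \dd^{1/\alpha}$, where $\alpha:=\log_4(1/\theta_0)\in(0,1)$. Using the near-linear behavior of $w_\vep^{\pm\vep}$ outside the $1D$ free-boundary layer (Lemma \ref{lem:1DSolutionBoundFB}) together with the exponential decay of $u_\vep$ in $\{u_\vep\le \vth_1\vep\}$ (Lemma \ref{lem:ExpDecayu}), the trap widths translate into
\[
\osc_{B_{R_k}(0)}(u_\vep-x_N)\le C(a_k+|b_k|)+C\vep\le C\dd\, R_k^\alpha,\qquad k\le k_{\max}.
\]
For $r<R_{k_{\max}}$, the trivial containment together with $R_{k_{\max}}^\alpha\le \dd$ gives $\osc\le C\dd\cdot\dd\le C\dd(r+\dd)^\alpha$. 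Dividing by $\dd$, one obtains $\osc_{B_r(0)}v_{\vep,\dd}\le C(r+\dd)^\alpha$ for all $r\le 3/4$.

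To extend the estimate to arbitrary $z\in B_{1/2}\cap\{u_\vep\ge \vth_1\vep\}$, I would distinguish two cases. If $u_\vep(z)\in[\vth_1\vep,\vth_2\vep]$, the Flat$_2$ condition at $0$ translates (after the shift $x\mapsto x-z$) into a Flat$_2$ condition centered at $z$ with flatness bounded by $C\dd$ at scale $1/4$, and the same iteration of Lemma \ref{lem:ImpFlat1} centered at $z$ yields $\osc_{B_r(z)}v_{\vep,\dd}\le C(r+\dd)^\alpha$. If $u_\vep(z)>\vth_2\vep$, pick a closest point $z^\ast\in\partial\{u_\vep\le\vth_2\vep\}$ to $z$, so $u_\vep(z^\ast)=\vth_2\vep$ and $d_\ast:=|z-z^\ast|>0$; in $B_{d_\ast}(z)\subset\{u_\vep>\vth_2\vep\}$, both $u_\vep$ and $v_{\vep,\dd}$ are harmonic. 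Applying the previous case at $z^\ast$ gives $\osc_{B_r(z^\ast)}v_{\vep,\dd}\le C(r+\dd)^\alpha$. For $r\ge d_\ast/2$, the inclusion $B_r(z)\subset B_{3r}(z^\ast)$ yields the estimate directly. For $r<d_\ast/2$, starting from $\osc_{B_{d_\ast}(z)}v_{\vep,\dd}\le C(d_\ast+\dd)^\alpha$ (via $B_{d_\ast}(z)\subset B_{2d_\ast}(z^\ast)$), harmonic gradient estimates in $B_{d_\ast/2}(z)$ give $\osc_{B_r(z)}v_{\vep,\dd}\le C r(d_\ast+\dd)^\alpha/d_\ast\le C(r+\dd)^\alpha$ (checking separately $\dd\le d_\ast$ and $\dd>d_\ast$). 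Combining all cases, $\osc_{B_r(z)}v_{\vep,\dd}\le C(r+\dd)^\alpha$ for every such $z$ and every $r\in(0,1/4]$, which is equivalent to \eqref{eq:ImpFlat2CalphaBoundproof}.

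The main technical obstacle is the third step, particularly the case $u_\vep(z)>\vth_2\vep$: one must verify that the translated Flat$_2$ condition at $z$ or $z^\ast$ continues to satisfy the sign and size hypotheses of Lemma \ref{lem:ImpFlat1} (namely $\tilde b\le 0$ and $\tilde a+|\tilde b|\le C\dd$), possibly after minor adjustments of the parameters, and then to carefully patch together the free-boundary iteration (which controls $v_{\vep,\dd}$ up to $\partial\{u_\vep\le\vth_2\vep\}$) with the interior harmonic estimate (which controls $v_{\vep,\dd}$ inside $\{u_\vep>\vth_2\vep\}$). The asymmetry of the statement (upper bound only in $\{u_\vep\ge\vth_1\vep\}$, lower bound on all of $B_{1/2}$) mirrors the asymmetric behavior of the $1D$ barriers: $w_\vep^\vep$ vanishes for $x_N\ll 0$, whereas $w_\vep^{-\vep}$ is strictly positive everywhere.
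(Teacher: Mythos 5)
Your strategy --- converting $\mathrm{Flat}_1$ to $\mathrm{Flat}_2$ via Lemma \ref{lem:ImpFlat2tris}, iterating Lemma \ref{lem:ImpFlat1} at dyadic scales centered at points of the transition layer, and then spreading the bound into $\{u_\vep>\vth_2\vep\}$ by interior harmonic estimates --- is exactly the paper's strategy. However, there is a genuine technical error in how you record the intermediate bounds: statements such as
\[
\osc_{B_{R_k}(0)}(u_\vep-x_N)\le C(a_k+|b_k|)+C\vep
\qquad\text{and}\qquad
\osc_{B_r(z)}v_{\vep,\dd}\le C(r+\dd)^\alpha
\]
are \emph{false} as written. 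Deep inside $\{u_\vep<\vth_1\vep\}$, say at $x\in B_{1/4}(0)$ with $x_N\sim -1/4$, Lemma \ref{lem:ExpDecayu} gives $u_\vep(x)\approx 0$, so $v_{\vep,\dd}(x)=\frac{u_\vep(x)-x_N}{\dd}\sim \frac{1}{4\dd}$, which is far larger than $C(r+\dd)^\alpha$. The trap $w^\vep_\vep(x_N-a_k)<u_\vep<w^{-\vep}_\vep(x_N-b_k)$ does \emph{not} control the full oscillation of $u_\vep - x_N$: the barrier $w^{-\vep}_\vep$ is constant (of size $\sim\vep^{3/2}$) for $x_N$ below its minimum, so $w^{-\vep}_\vep(x_N-b_k)-x_N$ grows linearly there and the upper bound on $u_\vep-x_N$ disappears. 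You note the asymmetry of the statement in your closing paragraph, but the body of your argument does not actually preserve it, and this is not a cosmetic point: if one tries to run your plan with $\osc$, the conclusion at each step is unavailable.

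What the paper does instead, and what you must do, is propagate the one-sided information: after each application of Lemma \ref{lem:ImpFlat1} one converts back via \eqref{eq:Flat21} and records \emph{only} that
\[
u_\vep(x)-(x_N-z_N)\le 2\theta_0^k\tilde\dd \quad\text{in } B_{4^{-k}/2}(z)\cap\{u_\vep\ge\vth_1\vep\},
\qquad
-2\theta_0^k\tilde\dd\le u_\vep(x)-(x_N-z_N)\quad\text{in } B_{4^{-k}/2}(z),
\]
(cf.\ \eqref{eq:ImpFlat2BounduMinusx}) and then divides by $\dd$; in particular it never estimates an oscillation over a full ball. Your case analysis (by whether $u_\vep(z)\in[\vth_1\vep,\vth_2\vep]$ or $u_\vep(z)>\vth_2\vep$) is workable, but when $x\in\{u_\vep<\vth_1\vep\}$ and $z\in\{u_\vep>\vth_2\vep\}$ you cannot argue via $\osc$ at $z^\ast$; you need to bridge through an intermediate point $\bar z$ on the segment $xz$ lying in $\{\vth_1\vep\le u_\vep\le\vth_2\vep\}$ and combine the two one-sided estimates, as in the paper's Step~3. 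Keeping the asymmetric form at every step both fixes the error and produces exactly \eqref{eq:ImpFlat2CalphaBoundproof}.
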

\begin{proof} Let $\delta_0,\theta_0 \in (0,1)$ and $c_0 \in (0,1)$ as in Lemma \ref{lem:ImpFlat1}, and $\vep_0 \in (0,1)$ as in Lemma \ref{lem:ImpFlat2tris}. We set 
\[
\tilde{\dd}_0 := \min\{\dd_0/4,\sqrt{\vep_0/4},c_0/4\},
\]
and take $\dd \in (0,\tilde{\dd}_0)$, $\varepsilon \in (0,\delta^2)$. Notice that the definition of $\tilde{\dd}_0$ guarantees $4\dd < \dd_0$ and $4\vep < \vep_0$. For simplicity we also set $u = u_\varepsilon$, $w^\varepsilon = w_\varepsilon^\varepsilon$ and $w^{-\varepsilon} = w_\varepsilon^{-\varepsilon}$, and define
\begin{equation}\label{eq:ImpFlat2DefD1Ep1}
\kappa := \frac{1}{\tilde{\dd}_0}, \qquad 0 < \alpha < |\log_4(\theta_0)|, \qquad C \geq 4^{1+2\alpha}.
\end{equation}

\smallskip

\emph{Step 1.} We first prove that \eqref{eq:ImpFlat2CalphaBoundproof} holds true for every $z \in \{\vth_1\vep \leq u_\vep \leq \vth_2\vep\} \cap B_{1/2}$. Let us set 
\[
\tilde{\dd} := 2(\dd + \sqrt{\vep/2}).
\]
Notice that $\vep < \dd^2$ implies $\tilde{\dd} \leq 4\dd$ and thus, since $\tilde{\dd} \geq 2\dd$ by definition, it is equivalent to work with $\tilde{\dd}$ instead of $\dd$, which is what we will do from now on.

So, we fix $j \in \NN$ such that
\begin{equation}\label{eq:BoundDeltaTil}
4^{-j-2} \leq \frac{\tilde{\delta}}{4\tilde{\dd}_0} = \frac{\kappa}{4} \tilde{\delta} < 4^{-j-1},
\end{equation}
and we use the definition of $\tilde{\dd}$ to combine \eqref{eq:FlatCondB1} and \eqref{eq:Flat12}, which yield
\[
 w^\vep(x_N - \tilde{\delta}) < u(x) <  w^{-\vep}(x_N + \tilde{\delta}) \quad \text{ in } B_{3/4}.
\]
Now, in view of \eqref{eq:BoundDeltaTil}, we have $\tilde{\dd} \leq 4^{-j}\tilde{\delta}_0$ and, since $\vep < \dd^2$, we also have $\vep \leq \tilde{\dd}^2 \leq c_0\tilde\dd$ and so we may apply Lemma \ref{lem:ImpFlat1}  (rescaled and translated from $B_1$ to $B_{1/4}(z)$, i.e. applied to the function $u(z+ 4\,\cdot\,)$) iteratively on $B_{4^{-k}}(z)$ for $1\le k \leq j$, deducing the existence of $a_k$ and $b_k$ (with $a_0 = -b_0 = \tilde{\dd}$) for which
\begin{equation}\label{eq:ImpFlat2IterFB}
\begin{aligned}
&w^\varepsilon(x_N - z_N - a_k) \leq u(x) \leq w^{-\varepsilon}(x_N - z_N - b_k) \quad \text{ in } B_{4^{-k}}(z), \\
&0 < a_k + |b_k| \leq 4\theta_0^k \tilde{\delta}.
\end{aligned}
\end{equation}
Then, applying \eqref{eq:Flat21} to \eqref{eq:ImpFlat2IterFB} (choosing $R = 4^{-k}$ and $\dd = (a_k + |b_k|)4^k$) and recalling that $\theta_0 \in (\tfrac{1}{2},1)$, it follows
\begin{equation}\label{eq:ImpFlat2BounduMinusx}
\begin{array}{rlll}
{} &\hspace{-5pt} u(x) - (x_N-z_N) \leq 2\theta_0^k\tilde{\dd} \quad &\mbox{in }& B_{4^{-k}/2}(z) \cap \{u_\vep \geq \vartheta_1\vep\}\\
-2\theta_0^k\tilde{\dd} \leq&\hspace{-5pt}  u(x) - (x_N-z_N) \quad   &\mbox{in }& B_{4^{-k}/2}(z), 
\end{array}
\end{equation}
for all  $1\le k \leq j$ (notice that since $\vep < \dd^2 < \tilde{\dd}^2$ and $\tilde{\dd}_0 < \sqrt{\vep_0}$ we automatically have $\vep < \vep_0 4^{-k}$, for every $k \leq j$).

Now, assume that $|x-z| \geq \kappa\tilde{\dd}$. Then $4^{-j+n-2} \leq |z-x| < 4^{-j+n-1}$, for some $0 \leq n \leq j$ ($n \in \NN$) by the definition of $j$. Applying \eqref{eq:ImpFlat2BounduMinusx} with $k=j-n$, we find
\[
u(x) - x_N - (u(z) - z_N) = u(x) - (x_N - z_N) - u(z) \;\;
\begin{cases}
\leq  4\theta_0^{j-n} \tilde{\dd} \quad\; \text{ if } x \in \{u_\vep \geq \vartheta_1\vep\} \\
\geq -4\theta_0^{j-n} \tilde{\dd},
\end{cases}
\]
and thus
\begin{equation}\label{eq:ImpFlat2Bound1vSS}
\begin{array}{rlll}
{} &\hspace{-5pt} v(x) - v(z) \leq  4\theta_0^{j-n} \quad \mbox{if } x \in  \{u_\vep \geq \vartheta_1\vep\}\\
- 4\theta_0^{j-n} \leq&\hspace{-5pt}  v(x) - v(z),
\end{array}
\end{equation}
where we have set $v := v_{\vep,\dd}$ for simplicity. Using the definitions of $\alpha$ and $C$ in \eqref{eq:ImpFlat2DefD1Ep1} and that $|x-z| \geq 4^{-j+n-2}$, we have $4\theta_0^{j-n} \leq C|x-z|^\alpha$ and \eqref{eq:ImpFlat2CalphaBoundproof} follows.

If $|x-z| \leq \kappa\tilde{\dd}$, then, proceeding as above, we find that \eqref{eq:ImpFlat2Bound1vSS} holds true with $n=0$ and so, since $4\theta_0^j \leq C (4^{-\alpha})^{j+2}$ for $\alpha$ and $C$ as in \eqref{eq:ImpFlat2DefD1Ep1}, and $\kappa\tilde{\dd} \geq 4^{-j-2}$, we deduce 
\[
\begin{array}{rlll}
{} &\hspace{-5pt} v(x) - v(z) \leq  C(\kappa\tilde{\dd})^\alpha \quad \mbox{if } x \in  \{u_\vep \geq \vartheta_1\vep\}\\
- C(\kappa\tilde{\dd})^\alpha \leq&\hspace{-5pt}  v(x) - v(z),
\end{array}
\]
and \eqref{eq:ImpFlat2CalphaBoundproof} follows.

\smallskip

\emph{Step 2.} Now we consider the case $x,z \in \{u \geq \vartheta_2\varepsilon \}\cap B_{1/4}$. We fix $x_0 \in \partial \{u > \vep\vth_2\}\cap B_{1/4}$ such that $|x-x_0| = \dist(x,\partial \{u > \vep\vth_2\}) := d(x)$.

Set $d := d(x)$ and assume first $\kappa\delta \vee |x-z| < d/4$. In this case, using \emph{Step 1}, we easily obtain
\[
|v(\xi) - v(x_0)| \leq C(\kappa \delta \vee |\xi - x_0|)^\alpha \leq 2^\alpha C \, d^\alpha, \quad \forall \xi \in B_{d/2}(x).
\]
So, since $v$ is harmonic in $B_d(x)$, we have
\[
\sup_{B_{d/4}(x)} |\nabla v| \leq c_N \frac{\osc_{B_{d/2}(x)} v }{d} \leq c_N C_\alpha d^{\alpha - 1},
\]
for some $C_\alpha > 0$ and thus $|v(x) - v(z)| \leq c_N C_\alpha d^{\alpha - 1} |x-z| \leq C_\alpha (\kappa \delta \vee |x - z|)^\alpha$ for some new $C_\alpha > 0$.

On the other hand, if $\kappa\delta \vee |x-z| \geq d/4$, we may apply the estimate of \emph{Step 1} twice to obtain
\[
\begin{aligned}
|v(x) - v(z)| &\leq |v(x) - v(x_0)| + |v(x_0) - v(z)| \\
&\leq C \left[ (\kappa \delta \vee |x - x_0|)^\alpha + (\kappa \delta \vee |z - x_0|)^\alpha \right]\\
&\leq C \left\{ [\kappa \delta \vee d(x)]^\alpha + \left[ \kappa \delta \vee (|x - z| + d(x))\right]^\alpha \right\} \leq C_\alpha (\kappa\delta \vee |x-z|)^\alpha,
\end{aligned}
\]
for some $C,C_\alpha > 0$ and our statement follows.

\smallskip

\emph{Step 3.} 
If $x \in \{u \leq \vth_1\vep \}$ and $z \in \{u > \vth_2\vep \}$ then there exists $\bar z \in  \{ \vth_1\vep \le u_\vep \le \vth_2\vep \}$ which belongs to the segment $xz$.
Hence, using the previous steps
\[
\begin{aligned}
v(x)-v(z) &\ge v(x) - v(\bar z) - |v(\bar z) - v(z)|  \\ 
&\ge -C(\kappa \delta \vee |x - \bar z|)^\alpha -C(\kappa \delta \vee |x - \bar z|)^\alpha  \ge -C(\kappa \delta \vee |x - z|)^\alpha, 
\end{aligned}
\]
and the proof of \eqref{eq:ImpFlat2CalphaBoundproof} is complete.
\end{proof}
\begin{lem}\label{lem:ImpFlat2.1}
There exists a H\"older continuous function $v : \{x_N \geq 0\}\cap \overline{B_{1/4}} \to \mathbb{R}$, harmonic in $\{x_N > 0\}\cap B_{1/4}$ and with $\|v\|_{L^\infty}=1$ such that for every sequence $\delta_j \to 0^+$, every $\varepsilon_j \in (0,\delta_j^2)$ and every critical point $u_{\varepsilon_j}$ of \eqref{eq:Energy} in $B_2$ satisfying
\begin{equation}\label{eq:FlatCondB12}
\begin{array}{rlll}
{} &\hspace{-5pt} u_{\vep_j}(x) - x_N \leq \delta_j \quad &\mbox{in }& B_1 \cap \{u_{\vep_j} \geq \vartheta_1\vep_j \}\\
-\delta_j \leq&\hspace{-5pt}  u_{\vep_j}(x) - x_N \quad   &\mbox{in }&B_1, 
\end{array}
\end{equation}
with $u_{\vep_j}(0) \in [\vartheta_1\vep_j, \vartheta_2\vep_j]$, then, setting
\[
v_j(x) := \frac{u_{\varepsilon_j}(x) - x_N}{\delta_j},
\]
the sequence of graphs
\begin{equation}\label{eq:ImpFlat2.1Gj}
G_j = \left\{ (x,v_j(x)): x \in \{u_{\varepsilon_j} \geq \vartheta_1\varepsilon_j\}\cap B_{1/4} \right\}
\end{equation}
converge in the Hausdorff distance in $\mathbb{R}^{N+1}$ to 
\begin{equation}\label{eq:ImpFlat2.1G}
G = \left\{ (x,v(x)): x \in \{x_N \geq 0\} \cap B_{1/4} \right\},
\end{equation}
as $j \to + \infty$, up to passing to a suitable subsequence.
\end{lem}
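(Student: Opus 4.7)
The plan is the standard compactness argument in the style of De Silva and Savin, based on the uniform H\"older estimate of Lemma \ref{lem:ImpFlat2}. First I would show that the ``positivity sets'' $\Omega_j := \{u_{\varepsilon_j} \geq \vartheta_1\varepsilon_j\} \cap \overline{B}_{1/4}$ converge in Hausdorff distance to $\Omega_\infty := \{x_N \geq 0\} \cap \overline{B}_{1/4}$. Indeed, by \eqref{eq:FlatCondB12} every $x \in \Omega_j$ satisfies $x_N \geq u_{\varepsilon_j}(x) - \delta_j \geq -\delta_j$; conversely, since $\varepsilon_j < \delta_j^2$ implies $\vartheta_1\varepsilon_j \ll \delta_j$ for $j$ large, any $x$ with $x_N \geq 2\delta_j$ satisfies $u_{\varepsilon_j}(x) \geq x_N - \delta_j > \vartheta_1\varepsilon_j$, hence belongs to $\Omega_j$. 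Together these give $\mathrm{dist}_H(\Omega_j, \Omega_\infty) = O(\delta_j) \to 0$.

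Next, Lemma \ref{lem:ImpFlat2} (applied for $j$ large so that $\delta_j < \tilde\delta_0$ and combining both inequalities in \eqref{eq:ImpFlat2CalphaBoundproof}) provides the uniform estimate
\[
|v_j(x) - v_j(z)| \leq C(\delta_j + |x-z|)^\alpha \qquad \text{for all } x, z \in \Omega_j \cap B_{1/2},
\]
while \eqref{eq:FlatCondB12} gives $|v_j| \leq 1$ pointwise on $\Omega_j$. Viewing each graph $G_j$ as a compact subset of the compact space $\overline{B}_{1/4} \times [-1,1] \subset \mathbb{R}^{N+1}$, the Blaschke selection theorem yields a Hausdorff-convergent subsequence $G_{j_k} \to G$. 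The uniform H\"older bound, which survives in the limit because $\delta_j \to 0$, prevents $G$ from containing vertical segments; combined with the domain convergence above, this forces $G = \{(x, v(x)) : x \in \Omega_\infty\}$ for a well-defined $\alpha$-H\"older continuous function $v : \Omega_\infty \to [-1,1]$, yielding in particular $\|v\|_{L^\infty} \leq 1$.

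To verify that $v$ is harmonic in $\{x_N > 0\} \cap B_{1/4}$, I would fix a compact $K$ in this open set and set $\eta := \inf_K x_N > 0$. For $j$ large enough that $\delta_j + \vartheta_2\varepsilon_j < \eta/2$, the lower bound in \eqref{eq:FlatCondB12} gives $u_{\varepsilon_j} \geq x_N - \delta_j \geq \eta/2 > \vartheta_2\varepsilon_j$ on $K$, so by \eqref{eq:AssumptionsPhi} we have $\Phi'_{\varepsilon_j}(u_{\varepsilon_j}) \equiv 0$ on $K$ and thus $v_j = (u_{\varepsilon_j} - x_N)/\delta_j$ is harmonic on $K$. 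Since $v_{j_k} \to v$ uniformly on $K$ (a consequence of the graph Hausdorff convergence together with the continuity of $v$), the limit $v$ is harmonic on $K$, hence on all of $\{x_N > 0\} \cap B_{1/4}$.

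The main obstacle I anticipate lies in the compactness step: the graphs $G_j$ sit over moving domains $\Omega_j$ of varying shape, so translating Hausdorff graph-convergence into the statement ``$G$ is the graph of an $\alpha$-H\"older function on $\Omega_\infty$'' needs care. The key point is that the H\"older estimate of Lemma \ref{lem:ImpFlat2} is stated in a form that survives the limit $\delta_j \to 0$ (the $\delta_j$-offset in $\omega_{\delta_j}$ goes away), and this is precisely what rules out vertical pieces in $G$ and simultaneously delivers the H\"older regularity of $v$ up to the boundary $\{x_N = 0\}$.
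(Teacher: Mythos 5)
Your proposal is correct and follows essentially the same path as the paper: both arguments rest on the two-sided H\"older estimate from Lemma \ref{lem:ImpFlat2} for points of $\{u_{\varepsilon_j}\ge\vartheta_1\varepsilon_j\}$, the bound $|v_j|\le 1$ from \eqref{eq:FlatCondB12}, and the harmonicity of $v_j$ wherever $u_{\varepsilon_j}>\vartheta_2\varepsilon_j$; the only difference is organizational, in that you extract a Hausdorff limit of the graphs first (Blaschke selection) and then reconstruct $v$ and deduce local uniform convergence, whereas the paper first extracts $v$ via interior elliptic estimates and Arzel\`a--Ascoli on the slices $\{x_N>\sigma\}\cap B_{1/4}$ (diagonal argument) and then verifies the graph convergence by hand. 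Note that, exactly as in the paper's own proof, your argument yields $\|v\|_{L^\infty}\le 1$ rather than $=1$, so this is not a gap on your side but a small mismatch already present between the paper's statement and its proof.
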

\begin{proof} Let $\alpha \in (0,1)$ and $\kappa,C > 0$ as in Lemma \ref{lem:ImpFlat2}. Let $\delta_j \to 0^+$, $\varepsilon_j \in (0,\delta_j^2)$ and set $U_j := \{u_{\varepsilon_j} > \vartheta_1\varepsilon_j\}\cap B_{1/4}$, $H := \{x_N > 0\} \cap B_{1/4}$.

\smallskip

\emph{Step 1: Compactness.} We show that there is $v$ (harmonic in $H$ and $\alpha$-H\"older in $\overline{H}$ and with $L^\infty$ norm bounded by 1) such that for every $\sigma \in (0,1/4)$,
\begin{equation}\label{eq:UnifConvvjv}
\|v_j - v\|_{L^\infty(H_\sigma)} \to 0,
\end{equation}
as $j \to + \infty$, up to passing to a suitable subsequence, where $H_\sigma := \{x_N > \sigma\} \cap B_{1/4}$.

By \eqref{eq:FlatCondB12}, there is $j_\sigma \in \NN$, such that $H_{\sigma/2} \subset U_j$  and $\|v_j\|_{L^\infty(H_\sigma)} \leq 1$ (this follows  \eqref{eq:FlatCondB12} by $\delta_j$) and every $j \geq j_\sigma$. In addition, $v_j$ is harmonic in $U_\sigma$ and thus, by standard elliptic estimates and a diagonal procedure, there exists a harmonic function $v$ in $H$ such that $v_j \to v$ locally uniformly in $H$, up to passing to a suitable subsequence. On the other hand, by \eqref{eq:ImpFlat2CalphaBoundproof}, we have
\[
|v_j(x) - v_j(y)| \leq C (\delta_j + |x-y|)^\alpha,
\]
for every $x,y \in \overline{U}_j$, and thus, passing to the limit as $j \to +\infty$, we obtain that $v$ can be continuously extended up to $\partial U$ and $v \in C^\alpha(\overline{H})$ with $\|v\|_{L^\infty(H)}\le 1$.

\smallskip

\emph{Step 2: Convergence of graphs.} Fix $\sigma \in (0,\tfrac{1}{4})$, $x \in \overline{H}$, $p := (x,v(x)) \in G$ and set $q := (y,v(y))$, where $y \in H_{\sigma/2}$ is taken such that $|x-y| \leq \sigma$. Then, by the $C^\alpha$ estimate proved above, we obtain
\[
|p-q|^2 = |x-y|^2 + |v(x) - v(y)|^2 \leq \sigma^2 + C^2\sigma^{2\alpha} \leq C^2\sigma^{2\alpha},
\]
for some new $C > 0$. Now, if $j$ is large enough, we have $H_{\sigma/2} \subset U_j$ and so
\[
\dist (q,G_j)^2 = \inf_{y' \in \overline{U}_j} |y-y'|^2 + |v(y) - v_j(y')|^2 \leq |v(y) - v_j(y)|^2 \leq \|v - v_j\|_{L^\infty(U_{\sigma/2})}^2,
\]
from which we deduce
\begin{equation}\label{eq:ImpFltHausdorff}
\dist (p,G_j) \leq |p-q| + \dist (q,G_j) \leq C\sigma^\alpha + \|v - v_j\|_{L^\infty(U_{\sigma/2})} \leq C\sigma^\alpha,
\end{equation}
for some new $C > 0$, for every $j$ large enough, in view of \eqref{eq:UnifConvvjv}.

On the other hand, given any sequence $p_j = (x_j,v_j(x_j)) \in G_j$,  if $j$ is large enough we may take $y_j \in H_{\sigma/2}$ such that $\tfrac{\sigma}{2} \leq |x_j - y_j| \leq \sigma$ with $j$ such that $ \delta_j \leq \tfrac{\sigma}{2}$. Consequently, setting $q_j = (y_j,v_j(y_j))$, we have by \eqref{eq:ImpFlat2CalphaBoundproof}
\[
|p_j - q_j|^2 = |x_j - y_j|^2 + |v_j(x_j) - v_j(y_j)|^2 \leq \sigma^2 + C^2\sigma^{2\alpha} \leq C^2 \sigma^{2\alpha}.
\]
Further, as above
\[
\dist(q_j,G) = \inf_{y' \in \overline{H}} |y_j-y'|^2 + |v_j(y_j) - v(y')|^2 \leq |v_j(y_j) - v(y_j)|^2 \leq \|v - v_j\|_{L^\infty(U_{\sigma/2})},
\]
and thus, by \eqref{eq:UnifConvvjv},
\begin{equation}\label{eq:ImpFltHausdorffbis}
\dist(p_j,G) \leq C\sigma^\alpha + \|v - v_j\|_{L^\infty(U_{\sigma/2})} \leq C\sigma^\alpha,
\end{equation}
for $j$ large enough. Since $p$, $p_j$ and $\sigma > 0$ are arbitrary, the thesis follows by \eqref{eq:ImpFltHausdorff} and \eqref{eq:ImpFltHausdorffbis}.
\end{proof}
%
%
%
%
%

%
%
%%%%%%%%%%%%%%%%%%%%%%%%%%%%%%%%%%%%%%%%%%%%%%%%%%%%%%%%%%%%%%%%%%%%%%%%%%%%%%%%%%%%%%%%%%%%%%%%%%%%%%%%%%%%%%%%%%%%
%
%
\begin{proof}[Proof of Theorem \ref{lem:ImpFlat3}] By scaling, we may assume $R = 1$. Assume by contradiction that there are $\gamma \in (0,1)$ and a sequence $\delta_j \to 0^+$ such that for every $\varrho_0 \in (0,1)$, there is $\varepsilon_j \in (0,\delta_j^2)$, a solution $u_j := u_{\varepsilon_j}$ to \eqref{eq:EulerEq} in $B_1$ satisfying
\begin{equation}\label{eq:ImpFlat3AssSeqj}
\begin{array}{rlll}
{} &\hspace{-5pt} u_{\vep_j}(x) - x_N \leq \delta_j \quad &\mbox{in }& B_1 \cap \{u_{\vep_j} \geq \vartheta_1\vep_j \}\\
-\delta_j \leq&\hspace{-5pt}  u_{\vep_j}(x) - x_N \quad   &\mbox{in }&B_1, 
\end{array}
\end{equation}
with $u_{\vep_j}(0) \in [\vartheta_1\vep_j, \vartheta_2\vep_j]$, such that for every $\nu \in \Sf^{N-1}$, either
\begin{equation}\label{eq:ImpFlatFinalThesis1}
\begin{array}{rlll}
{} &\hspace{-5pt} u_{\vep_j}(x) - \nu\cdot x \leq \dd_j \varrho_0^{1+\gamma} \quad &\mbox{in }& B_{\varrho_0 } \cap \{u_{\vep_j} \geq \vartheta_1\vep_j\}\\
-\dd_j  \varrho_0^{1+\gamma} \leq&\hspace{-5pt}  u_{\vep_j}(x) -  \nu\cdot x \quad   &\mbox{in }&B_{\varrho_0} \cap \{u_{\vep_j} \geq 0\}
\end{array}
\end{equation}
or 
\begin{equation}\label{eq:ImpFlatFinalThesis2}
|\nu - e_N| \leq  \sqrt 2  n  \dd_j
\end{equation}
fails for $j \in \NN$ large enough.

\emph{Step 1: Compactness.} By Lemma \ref{lem:ImpFlat2.1}, we have that the sequence
\[
v_j := v_{\vep_j,\delta_j} = \frac{u_j - x_N}{\delta_j}
\]
converge uniformly on compact sets of $U := \{ x_N > 0 \} \cap B_{1/4}$ to some limit function $v \in C^\alpha(\overline{U})$ which is harmonic in $U$ and, further, the sequence of graphs $G_j$ defined in \eqref{eq:ImpFlat2.1Gj} converge in the Hausdorff distance in $\mathbb{R}^{N+1}$ to the graph $G$ defined in \eqref{eq:ImpFlat2.1G}. In addition, since $0 \in \{\vth_1\vep_j \leq u_j \leq \vth_2\vep_j\}$ and $\varepsilon_j \in (0,\delta_j^2)$, then
\[
0 < v_j(0) \leq \vartheta_2 \delta_j^2,
\]
for every $j$, and thus $v(0) = 0$. Before moving forward, we define the even reflection of $v$ w.r.t. the hyperplane $\{x_N = 0\}$
\[
\tilde{v}(x) =
\begin{cases}
v(x',x_N) \quad &\text{ in } x_N \geq 0 \\
v(x',-x_N) \quad &\text{ in } x_N < 0,
\end{cases}
\]
defined in the whole $B_{1/4}$ and satisfying $\tilde{v} \in C^\alpha(B_{1/4})$.

\smallskip

\emph{Step 2.} In this step we prove that $\partial_N \tilde{v} \leq 0$ in $\{x_N = 0\}$ in the viscosity sense, that is for every $\varphi \in C^\infty(B_1)$ such that $\varphi \leq \tilde{v}$ in $B_{1/4}$ with equality only at some $z \in \{x_N = 0\}\cap B_{1/4}$, then $\partial_N \varphi(z) \leq 0$.

By contradiction, we assume there is $\varphi \in C^\infty(B_1)$ and $z \in \{x_N = 0\}\cap B_{1/4}$ as above, with $\partial_N \varphi(z) > 0$. For simplicity, we assume $z=0$, $\varphi(z) = 0$ (the same proof work in the general case with minor modifications). In addition, we may take $\varphi$ to be a polynomial of degree $2$ (cf. \cite[Chapter 2]{CaffCabre2020:book}) with the form
\begin{equation}\label{eq:ImpFlat3ExprPhi}
\varphi(x) = m x_N + m'\cdot x' + x^T \cdot M\cdot x, \quad x \in B_r
\end{equation}
for some vector $(m',m)$ with $m > 0$, some matrix $M \in \RR^{N,N}$ with $\text{tr}(M) = 0$ and some $r > 0$. This can be easily obtained by modifying a generic polynomial of degree $2$, taking $r$ small enough and using the assumption $\partial_N\varphi(0) > 0$. Taking eventually $r$ smaller, we may also assume $\varphi \leq \tilde{v} - \epsilon$ in $\partial B_r$, for some $\epsilon > 0$ depending on $r$.

Now, since $G_j \to G$ in the Hausdorff distance and $\tilde{v} \in C^\alpha(B_{1/4})$, then for every sequence $\sigma_j \to 0^+$ there is a sequence $r_j \to 0^+$, such that
\begin{equation}\label{eq:ImpFlat3HausConvvjv}
|v_j(x) - \tilde{v}(y)| \leq \sigma_j, \quad \text{ for every } \; x,y \in \overline{U}_j \; \text{ satisfying } \; |x-y| \leq r_j,
\end{equation}
where $U_j := \{u_j > \vartheta_1\varepsilon_j\}\cap B_{1/4}$. Since $\tilde{v} \geq \varphi$ in $B_r$ with $\tilde{v} \geq \varphi + \epsilon$ in $\partial B_r$ and $v(0) = \varphi(0) = 0$, we have $v_j \geq \varphi - \sigma_j$ in $\overline{U}_j\cap B_r$, $v_j \geq \varphi + \epsilon - \sigma_j$ in $\overline{U}_j \cap \partial B_r$ and $v_j \leq \sigma_j$ in $\overline{U}_j \cap B_{r_j}$, for every $j$. Let
\[
t_j := \sup\{ t \in \mathbb{R}: v_j \geq \varphi + t\sigma_j \; \text{ in } \overline{U}_j \cap B_r \}.
\]
Since $v_j \leq \sigma_j$ in $\overline{U}_j \cap B_{r_j}$ and $\varphi > 0$ in $\{x_N > 0\} \cap\{x' = 0\} \cap B_{r_j}$, we have $t_j \in [-1,2]$.  So, setting $\tilde{\delta}_j := t_j\sigma_j\delta_j = o(\delta_j)$,
\[
\phi_j(x) := x_N + \delta_j\varphi(x) + \tilde{\delta}_j,
\]
and using the definition of $t_j$ and $v_j$, we deduce
\begin{equation}\label{eq:ImpFlat3BoundsujFirst}
\begin{cases}
u_j \geq \phi_j \quad &\text{ in } \overline{U}_j \cap B_r \\
u_j \geq \phi_j + \epsilon\delta_j \quad &\text{ in } \overline{U}_j \cap \partial B_r \\
u_j(x_j) = \phi_j(x_j) \quad &\text{ for some } x_j \in \overline{U}_j\cap B_r.
\end{cases}
\end{equation}
Further, by \eqref{eq:ImpFlat2CalphaBoundproof}, we have
\begin{equation}\label{wthiowhoithw}
v_j(x) - v_j(y) \geq -\omega_j(x-y) \quad \forall x \in B_r, \, y \in \overline{U}_j,
\end{equation}
where $\omega_j(x-y) := C(\dd_j + |x-y|)^{\alpha}$, and $C > 0$ and $\alpha \in (0,1)$ are as in Lemma \ref{lem:ImpFlat2}, for every $j$. 

Now, given $x \in \{ x_N > -\sqrt{\dd_j} \} \cap B_r$, since by assumption $\{x_N \geq -\dd_j\} \subset U_j$, we can take $y \in \overline{U}_j$ such that $|x-y| \leq 2\sqrt{\dd_j}$.
Hence, using \eqref{wthiowhoithw} we deduce
\begin{equation}\label{eq:ImpFlat3BoundBeloTotal}
\begin{aligned}
v_j(x) &\geq v_j(y) - \omega_j(x-y) \geq \varphi(y) - \sigma_j - C \dd_j^{\alpha/2} \geq \varphi(x) - C|x-y| - \sigma_j - C \dd_j^{\alpha/2} \\
&\geq \varphi(x) - 2C\dd_j^{1/2} - \sigma_j - C \dd_j^{\alpha/2},
\end{aligned}
\end{equation}
for $j$ large enough and a new constant $C > 0$. Consequently, noticing that
\begin{equation}\label{eq:ImpFlat3BoundBeloTotal1}
v_j(x) = \frac{u_j(x) - x_N}{\dd_j} \geq -\frac{x_N}{\dd_j} \geq \dd_j^{-1/2} \to +\infty \quad \text{ in } B_r\cap\{x_N \leq -\sqrt{\dd_j}\},
\end{equation}
for large $j$, it follows
\begin{equation}\label{eq:ImpFlat3BoundsujFirstBis}
u_j \geq \phi_j \quad \text{ in } B_r,
\end{equation}
for $j$ large enough, eventually taking $\tilde{\dd}_j =o(\dd_j)$ smaller.

Now, let us set $w^{\varepsilon_j} = w_{\vep_j}^{\varepsilon_j}$. Combining the first inequality of \eqref{eq:1DSolCompxwvep} (with $\dd = 0$ and $\sigma \in (1/2,3/4)$) with  \eqref{eq:ImpFlat3BoundsujFirstBis}, we obtain $u_j > w^{\varepsilon_j}(\phi_j - \vep_j^\sigma)$ in $B_r\cap \{ w^{\varepsilon_j}(\phi_j - \vep_j^\sigma) > 0\}$ and thus, since $u_j > 0$,
\begin{equation}\label{eq:ImpFlat3BoundaboveujwphijBr1}
u_j > w^{\varepsilon_j}(\phi_j - \vep_j^\sigma) \quad \text{ in } \overline{B}_r,
\end{equation}
for $j$ large enough. Using \eqref{eq:1DSolCompxwvep} again and the last two inequalities in \eqref{eq:ImpFlat3BoundsujFirst}, it follows
\begin{equation}\label{eq:ImpFlat3Boundsuj}
\begin{cases}
u_j > w^{\varepsilon_j} (\phi_j - \vep_j^\sigma + \tfrac{\epsilon}{2}\delta_j) + \tfrac{\epsilon}{2}\delta_j \;\quad &\text{ in } \overline{U}_j \cap \partial B_r \\
u_j(x_j) < w^{\varepsilon_j}(\phi_j(x_j) + \vep_j^\sigma),
\end{cases}
\end{equation}
for every $j$ large enough. Now, let us set $w_\lambda := w^{\varepsilon_j}(\phi_j + \lambda \vep_j^\sigma)$ and define
\[
\lambda_\ast :=  \sup \{ \lambda \in (-1,\infty): w_\lambda < u_j \; \text{ in } B_r \}.
\]
By definition of $\lm_\ast$, we have
\begin{equation}\label{eq:ImpFlat3TocuhingPointyj}
\begin{cases}
u_j \geq w_{\lm_\ast} \quad &\text{ in } B_r \\
u_j(y) = w_{\lm_\ast}(y) \quad &\text{ for some } y \in \{w_{\lm_\ast} > 0\} \cap \overline{B}_r,
\end{cases}
\end{equation}
while, following \eqref{eq:ImpFlat1Eqvt} and recalling that $\partial_N \varphi > 0$ in $B_r$ and $\Delta \varphi = \text{tr} (M) = 0$, we easily find
\begin{equation}\label{eq:ImpFlat3StrictSubsol}
\begin{aligned}
\Delta w_{\lm_\ast} &= \tfrac{1}{2} \Phi_{\varepsilon_j}'(w_{\lm_\ast}) \left[1 + 2\delta_j \partial_N \varphi + \delta_j^2|\nabla \varphi|^2 \right] %+ \delta_j \dot w_{\lm_\ast} \Delta \varphi
 \\
&> \tfrac{1}{2} \Phi_\varepsilon'(w_{\lm_\ast}) \quad \text{ in } \{w_{\lm_\ast} > 0\} \cap B_r.
\end{aligned}
\end{equation}
If $y \in \{w_{\lm_\ast} > 0\} \cap B_r$, then $\Delta (u_j - w_{\lm_\ast})(y) < 0$, in contradiction with \eqref{eq:ImpFlat3TocuhingPointyj}. So, we are left to show that it cannot be $y \in \{w_{\lm_\ast} > 0\} \cap \partial B_r$, obtaining a contradiction with the definition of $\lm_\ast$.

To see this, we notice that $\lm_\ast \in (-1,1)$, thanks to \eqref{eq:ImpFlat3Boundsuj} and the monotonicity of $w^{\vep_j}$. Consequently, since for $j$ large enough we have $2\dd_j^{2\sigma-1} < \tfrac{\epsilon}{2}$, the first inequality in \eqref{eq:ImpFlat3Boundsuj} yields
\[
\begin{aligned}
w_{\lambda_\ast} &= w^{\varepsilon_j}(\phi_j + \lambda_\ast \vep_j^\sigma) \leq w^{\varepsilon_j}(\phi_j - \vep_j^\sigma + 2\vep_j^\sigma) \leq w^{\varepsilon_j}(\phi_j - \vep_j^\sigma + 2\dd_j^{2\sigma}) \\
&\leq w^{\varepsilon_j}(\phi_j - \vep_j^\sigma + \tfrac{\epsilon}{2}\delta_j) < u_j - \tfrac{\epsilon}{2}\delta_j \quad \text{ in } \overline{U}_j \cap \partial B_r.
\end{aligned}
\]
Notice that the above inequality also implies $w_{\lm_\ast} = 0$ in $\partial U_j \cap \partial B_r$, that is $\{w_{\lm_\ast} > 0\} \cap \partial B_r \subset U_j \cap \partial B_r$, and our contradiction follows.

\smallskip

\emph{Step 3.} Now we show that $\partial_N \tilde{v} \geq 0$ in $\{x_N = 0\}$ in the viscosity sense, that is for every $\varphi \in C^\infty(B_1)$ such that $\varphi \geq \tilde{v}$ in $B_{1/4}$ with equality only at some $z \in \{x_N = 0\}\cap B_{1/4}$, then $\partial_N \varphi(z) \geq 0$.

Proceeding as in \emph{Step 2}, we assume by contradiction $\partial_N\varphi(0) < 0$ for some $\varphi \in C^\infty(B_1)$ as in \eqref{eq:ImpFlat3ExprPhi} with $m < 0$ and $\text{tr}(M) = 0$.

By \eqref{eq:ImpFlat3HausConvvjv} and the assumptions on $\varphi$, we have $v_j \leq \varphi + \sigma_j$ in $\overline{U}_j \cap B_r$, $v_j \leq \varphi - \epsilon + \sigma_j$ in $\overline{U}_j \cap \partial B_r$ and $v_j \geq -\sigma_j$ in $\overline{U}_j \cap B_{r_j}$, for every $j$. So, similar to \emph{Step 2}, we deduce
\[
\begin{cases}
u_j \leq \phi_j \quad &\text{ in } \overline{U}_j \cap B_r \\
u_j \leq \phi_j - \epsilon\delta_j \quad &\text{ in } \overline{U}_j \cap \partial B_r \\
u_j(x_j) = \phi_j(x_j) \quad &\text{ for some } x_j \in \overline{U}_j\cap B_r,
\end{cases}
\]
where $\phi_j(x) := x_N + \delta_j\varphi(x) + \tilde{\delta}_j$, for some $\tilde{\delta}_j = o(\delta_j)$. As above, by the second inequality in \eqref{eq:1DSolCompxwvep1}, we obtain
\[
u_j < w^{-\varepsilon_j}(\phi_j + \vep_j^\sigma) \quad \text{ in } \overline{U}_j \cap\overline{B}_r,
\]
and
\begin{equation}\label{eq:ImpFlat3Boundsaboveuj}
\begin{cases}
u_j < w^{-\varepsilon_j}(\phi_j + \vep_j^\sigma - \tfrac{\epsilon}{2}\delta_j) - \tfrac{\epsilon}{2} \delta_j \quad &\text{ in } \overline{U}_j \cap \partial B_r \\
u_j(x_j) > w_\varepsilon^{-\varepsilon}(\phi_j(x_j) - \vep_j^\sigma).
\end{cases}
\end{equation}
Actually, we have
\begin{equation}\label{eq:ImpFlat3BoundaboveujwphijBr}
u_j < w^{-\varepsilon_j}(\phi_j + \varepsilon_j^\sigma) \quad \text{ in } \overline{B}_r,
\end{equation}
for $j$ large enough. Indeed, exactly as in \eqref{eq:ImpFlat3ExpBound11}, $u_j$ exponentially decays in $B_r \setminus V_j$, where
\[
V_j := B_r\cap \{x : d(x,U_j) \leq \vep_j^{3/4}\},
\]
and thus, by \eqref{eq:1DEvenSolProp}, we have $u_j < w^{-\varepsilon_j}(\phi_j + \vep_j^\sigma)$ in $B_r\setminus V_j$. Moreover, by monotonicity,
\[
w^{-\varepsilon_j}(\phi_j + \vep_j^\sigma) \geq w^{-\varepsilon_j}(\phi_j + \varepsilon_j^\sigma - \tfrac{\epsilon}{2}\delta_j) \geq u_j + \tfrac{\epsilon}{2}\delta_j \quad \text{ in } \overline{U}_j \cap \partial B_r,
\]
by the first inequality in \eqref{eq:ImpFlat3Boundsaboveuj}. So, thanks to the comparison principle, we are left to check that
\[
u_j < w^{-\varepsilon_j}(\phi_j + \vep_j^\sigma) \quad \text{ in } \partial B_r \cap (V_j \setminus U_j).
\]
This follows exactly as the end of the proof of Lemma \ref{lem:ImpFlat2tris}: by the inequality above and $\vep_j \leq \dd_j^2$, we have
\[
w^{-\varepsilon_j}(\phi_j + \vep_j^\sigma) \geq \tfrac{\epsilon}{2} \sqrt{\vep_j} \quad \text{ in } \overline{U}_j \cap \partial B_r,
\]
and so, if $y \in \overline{B}_r$ is any point such that $w^{-\varepsilon_j}(\phi_j(y) + \vep_j^\sigma) = \vth_2\vep_j$ and $x \in \overline{U}_j \cap \partial B_r$, then it must be $|x-y| \geq c\sqrt{\vep_j}$, for some $c > 0$ independent of $j$, which implies 
\[
w^{-\varepsilon_j}(\phi_j + \vep_j^\sigma) \geq \vth_2\vep_j \quad \text{ in } \{x: d(x,U_j\cap\partial B_r) \leq \tfrac{\sqrt{\vep_j}}{2} \}.
\]
Finally, since $\vep_j^{3/4} < \vep_j^\sigma$ for every $j$ large enough, we have 
\[
w^{-\varepsilon_j}(\phi_j + \vep_j^\sigma) \geq \vth_2\vep_j > \vth_1\vep_j \geq u_j \quad \text{ in } \partial B_r \cap (V_j \setminus U_j), 
\]
and \eqref{eq:ImpFlat3BoundaboveujwphijBr} follows.

Now, similar to \emph{Step 2}, we define $w_\lambda := w^{-\varepsilon_j}(\phi_j + \lambda \vep_j^\sigma)$
\[
\lambda_\ast :=  \inf\{ \lambda \in (-\infty,1): u_j < w_\lambda \; \text{ in } B_r \},
\]
which satisfies $\lm_\ast \in (-1,1)$ in view of the second inequality in \eqref{eq:ImpFlat3Boundsaboveuj}. Further,
\begin{equation}\label{eq:ImpFlat3TocuhingPointyj21}
\begin{cases}
u_j \leq w_{\lm_\ast} \quad &\text{ in } B_r \\
u_j(y) = w_{\lm_\ast}(y) \quad &\text{ for some } y \in \overline{B}_r,
\end{cases}
\end{equation}
and by \eqref{eq:ImpFlat1Eqvt}-\eqref{eq:ImpFlat3StrictSubsol}, and that $\partial_N \varphi < \tfrac{m}{2}$ in $B_r$ with $\text{Tr}(M) = 0$, there holds
\[
\begin{aligned}
\Delta w_{\lm_\ast} = \tfrac{1}{2} \Phi_{\varepsilon_j}'(w_{\lm_\ast}) \left[1 + 2\delta_j \partial_N \varphi + \delta_j^2|\nabla \varphi|^2 \right] + \delta_j \dot w_{\lm_\ast} \Delta \varphi < \tfrac{1}{2} \Phi_{\varepsilon_j}'(w_{\lm_\ast}) \quad \text{ in } B_r,
\end{aligned}
\]
if $j$ is large enough. Exactly as above, \eqref{eq:ImpFlat3TocuhingPointyj21}, the equation of $u_j$ and the above differential inequality imply $y \in \partial B_r$. However, repeating the arguments of the proof of \eqref{eq:ImpFlat3BoundaboveujwphijBr} above (replacing $w^{-\varepsilon_j}(\phi_j + \vep_j^\sigma)$ with $w_{\lm_\ast}$) and using that the fact that $\lm_\ast \in (-1,1)$ allow us show that this is impossible, i.e., $u_j < w_{\lm_\ast}$ in $\partial B_r$, obtaining a contradiction.

\smallskip

\emph{Step 4.} As a consequence of \emph{Step 2} and \emph{Step 3}, we obtain that $\tilde{v}$ is bounded and harmonic in $B_{1/4}$ and $\partial_N \tilde{v} |_{x_N = 0} = \partial_N v |_{x_N = 0} = 0$, $\tilde{v}(0) = v(0) = 0$. In particular, by standard elliptic estimates, $\tilde{v} \in C^\infty(B_\varrho)$ and
\[
\sup_{x \in B_\varrho} |\tilde{v}(x) - \nabla v(0)\cdot x| \leq c_N \varrho^2,
\]
every $\varrho \in (0,\tfrac{1}{4})$ and some $c_N > 0$.  Proceeding as in \eqref{eq:ImpFlat3BoundBeloTotal}, we have
\[
v_j(x) \geq v_j(y) - \omega_j(x-y) \geq \tilde{v}(y) - \sigma_j - C \dd_j^{\alpha/2} \geq \tilde{v}(x) - 2C\dd_j^{1/2} - \sigma_j - C \dd_j^{\alpha/2},
\]
for every $x \in \{ x_N > -\sqrt{\dd_j} \} \cap B_r$, we take $y \in \overline{U}_j$ such that $|x-y| \leq 2\sqrt{\dd_j}$, while, by \eqref{eq:ImpFlat3BoundBeloTotal1}, $v_j(x) \geq \dd_j^{-1/2}$ in $\{ x_N < -\sqrt{\dd_j} \} \cap B_r$. Consequently, by \eqref{eq:ImpFlat3HausConvvjv}, for every $\varrho \in (0,\tfrac{1}{4})$, there is $j_\varrho > 0$ such that
\begin{equation}\label{eq:ImpFlat3C1alphaBound}
\begin{array}{rlll}
{} &\hspace{-5pt} v_j(x) - \nabla v(0)\cdot x \leq c_N \varrho^2 \quad &\mbox{in }& B_{\varrho} \cap \overline{U}_j \\
-c_N \varrho^2  \leq&\hspace{-5pt}  v_j(x) - \nabla v(0)\cdot x \quad   &\mbox{in }&B_{\varrho}
\end{array}
\end{equation}
for some new $c_N > 0$ and all $j \geq j_\varrho$.  Now, let us define the unit vector
\[
\nu := \frac{e_N + \delta_j\nabla v(0)}{|e_N + \delta_j\nabla v(0)|}.
\]
Notice that, since $\partial_N v(0) = 0$, we have
\begin{equation}\label{eq:ImpFlat3RelNormApproVect}
|e_N + \delta_j\nabla \tilde v(0)|^2 = 1 + \delta_j^2 |\nabla \tilde v(0)|^2,
\end{equation}
and so 
\[
\begin{aligned}
|e_N - \nu|^2 &= \frac{\delta_j^2 |\nabla v(0)|^2 + \left( |e_N + \delta_j \nabla v(0)| - 1 \right)^2 }{1 + \delta_j^2 |\nabla v(0)|^2} = \frac{2\delta_j^2 |\nabla v(0)|^2 + 2\left( 1 - |e_N + \delta_j \nabla v(0)| \right) }{1 + \delta_j^2 |\nabla v(0)|^2} \\
&\leq 2\delta_j^2 |\nabla \tilde v(0)|^2.
\end{aligned}
\]
Hence,  recalling $\|\tilde v\|_{L^\infty(B_1)} = \|v\|_{L^\infty(B_1\cap \{x_N>0\})}  \le1$ and using the standard gradient estimate for harmonic functions
\[
|e_N - \nu| \leq  \sqrt 2 \delta_j |\nabla \tilde v(0)| \le \sqrt 2 \delta_j  N\|\tilde v\|_{L^\infty(B_1)}  \le  \sqrt 2 N\delta_j,
\]
for and $j$ large enough. On the other hand, since $u_j$ is uniformly bounded in $B_{1/4}$ by \eqref{eq:ImpFlat3AssSeqj}, \eqref{eq:ImpFlat3RelNormApproVect} yields
\[
\begin{aligned}
\frac{u_j(x) - \nu \cdot x}{\delta_j} &= \frac{u_j(x) \left(\sqrt{1 + \delta_j^2|\nabla v(0)|^2} - 1 \right)}{\delta_j} + \frac{u_j(x) - (e_N + \delta_j\nabla v(0))\cdot x}{\delta_j} \\
&= O(\delta_j) + v_j(x) - \nabla v(0)\cdot x,
\end{aligned}
\]
and thus, by \eqref{eq:ImpFlat3C1alphaBound},
\begin{equation}\label{eq:ImpFlat3ImpFlatUj}
\begin{array}{rlll}
{} &\hspace{-5pt} u_j(x) - \nu\cdot x \leq c_N \varrho^2\dd_j \quad &\mbox{in }& B_{\varrho} \cap \overline{U}_j \\
-c_N \varrho^2\dd_j  \leq&\hspace{-5pt}  u_j(x) - \nu\cdot x \quad   &\mbox{in }&B_{\varrho},
\end{array}
\end{equation}
for some new $c_N > 0$ and $j \geq j_\varrho$.  Finally, given any $\gamma \in (0,1)$ and taking $\varrho_0 \in (0,\tfrac{1}{4})$ such that $c_N \varrho_0^2 \leq \varrho_0^{1+\gamma}$, we obtain that both \eqref{eq:ImpFlatFinalThesis1} and \eqref{eq:ImpFlatFinalThesis2} are satisfied, a contradiction.
\end{proof}
%
%
%

%
%%%%%%%%%%%%%%%%%%%%%%%%%%%%%%%%%%%%%%%%%%%%%%%%%%%%%%%%%%%%%%%%%%%%%%%%%%%%%%%%%%%%%%%%%%%%%%%%%%%%%%%%%%%%%

%%%%%%%%%%%%%%%%%%%%%%%%%%%%%%%%%%%%%%%%%%%%%%%%%%%%%%%%%%%%%%%%%%%%%%%%%%%%%%%%%%%%%%%%%%%%%%%%%%%%%%%%%%%%%
%
%
%
%
\section{Proof of Theorem \ref{thm:main} and Corollary \ref{cor:main}}\label{Sec:AsymFlatImplies1D}
The goal of this section is to prove Theorem \ref{thm:main} and Corollary \ref{cor:main}. 
The former will be a consequence of Theorem \ref{thm:AsymFlatImplies1D} below, which is obtained combining Theorem \ref{lem:ImpFlat3} and a sliding argument in the spirit of \cite{BerCaffaNiren1997:art,DipierroEtAl2020:art}. 
The latter will be an immediate byproduct of Proposition \ref{prop:main}, Theorem \ref{thm:main} and the classification of 1-homogeneous entire local minimizers of \eqref{eq:Energy1Phase} established in \cite{CafJerKen2004:art,JerisonSavin2009:art}.

We begin with two consequences of Theorem \ref{lem:ImpFlat3} that we will use in the proof of Theorem \ref{thm:AsymFlatImplies1D}.
\begin{cor}[Preservation of flatness]\label{cor:PresFlat}
Fix $\gamma=1/2$,  and let $\delta_0>0$ and $\varrho_0\in (0,1/4)$ be the constants as in Theorem \ref{lem:ImpFlat3}. Let $R_0: = 1/\varrho_0$. 
Given $\delta>0$, we define
\begin{equation}\label{eq:PresFlatAssj}
j_\delta : =  \bigg\lceil  \frac{|\log \delta^2|}{\log R_0} \bigg\rceil.
\end{equation}
Let $u:\RR^N\to\RR_+$ be a critical point $\EE$  with $u(0) \in [\vartheta_1, \vartheta_2]$. If $u$
satisfies ${\rm Flat_1}(\nu_k, \delta, R_0^k)$   for some $\delta\in (0,\delta_0)$, $k \geq j_\delta$, and $\nu_k \in \Sf^{N-1}$, then for every  $i$ such that $j_\delta \leq i \leq k$,  $u$ satisfies ${\rm Flat}_1(\nu_i, \delta, R_0^i)$ for some $\nu_i \in \Sf^{N-1}$.
\end{cor}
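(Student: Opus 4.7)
The approach is to iterate Theorem \ref{lem:ImpFlat3} downwards, from scale $R_0^k$ to scale $R_0^{j_\delta}$. The key (but essentially trivial) preliminary observation is the monotonicity of the ${\rm Flat}_1$ property in the flatness parameter: if $u$ satisfies ${\rm Flat}_1(\nu, \delta', R)$ with $0\le \delta'\le \delta$, then $u$ also satisfies ${\rm Flat}_1(\nu, \delta, R)$, since the bounds $\pm \delta R$ are simply weaker than $\pm \delta' R$.

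The plan is to prove by downward induction on $i \in \{j_\delta, j_\delta+1, \ldots, k\}$ the slightly stronger statement that there exist $\nu_i \in \Sf^{N-1}$ such that $u$ satisfies ${\rm Flat}_1(\nu_i, \delta \varrho_0^{(k-i)\gamma}, R_0^i)$. The base case $i=k$ is the hypothesis. For the inductive step, suppose the level-$i$ statement holds and rotate coordinates so that $\nu_i = e_N$. In these coordinates, the level-$i$ statement matches exactly the flatness hypothesis \eqref{eq:FlatnessScaleR} of Theorem \ref{lem:ImpFlat3}, applied at scale $R = R_0^i$ with flatness parameter $\delta \varrho_0^{(k-i)\gamma} \in (0,\delta_0]$ and $\vep = 1$. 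Theorem \ref{lem:ImpFlat3} then produces $\nu_{i-1} \in \Sf^{N-1}$ such that the conclusion \eqref{eq:FlatnessScaleRho0R} holds on $B_{\varrho_0 R_0^i} = B_{R_0^{i-1}}$ with deviation bound
\[
\delta \varrho_0^{(k-i)\gamma} \varrho_0^{1+\gamma} R_0^i \,=\, \bigl(\delta \varrho_0^{(k-i+1)\gamma}\bigr) R_0^{i-1},
\]
which is exactly ${\rm Flat}_1(\nu_{i-1}, \delta \varrho_0^{(k-i+1)\gamma}, R_0^{i-1})$, closing the induction. The base-point condition $u(0)\in [\vth_1,\vth_2]$ needed at each application is preserved for free because the origin is never translated.

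The only piece of bookkeeping is the requirement $\vep/R < \delta^2$ in Theorem \ref{lem:ImpFlat3}, which here (since $\vep = 1$) reads $R_0^i > \delta^{-2}$. This is precisely what the choice of $j_\delta$ guarantees: by definition $j_\delta \ge |\log \delta^2|/\log R_0$, and since $R_0 > 1$ this gives $R_0^{j_\delta} \ge \delta^{-2}$, hence $R_0^i \ge \delta^{-2}$ for every $i \ge j_\delta$. (If one worries about strict versus non-strict inequality when $|\log\delta^2|/\log R_0$ happens to be an integer, it is absorbed by a harmless enlargement of $j_\delta$ by $1$ or by a tiny shrinking of $\delta_0$.) Once the induction has reached each $i \in [j_\delta, k]$, the monotonicity observation above collapses ${\rm Flat}_1(\nu_i, \delta \varrho_0^{(k-i)\gamma}, R_0^i)$ to the desired ${\rm Flat}_1(\nu_i, \delta, R_0^i)$. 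There is no substantive obstacle in this corollary: the analytic work is entirely encapsulated in Theorem \ref{lem:ImpFlat3}, and the corollary is just a clean iteration stopped exactly at the critical mesoscale $R \sim \delta^{-2}$ at which the hypothesis $\vep/R < \delta^2$ would fail.
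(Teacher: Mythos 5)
There is a genuine gap in your inductive step, and it is exactly the obstruction that this corollary is designed to sidestep. You run the downward induction with the \emph{strengthened} statement ${\rm Flat}_1(\nu_i, \delta\varrho_0^{(k-i)\gamma}, R_0^i)$ and apply Theorem \ref{lem:ImpFlat3} at scale $R=R_0^i$ with flatness parameter $\delta'=\delta\varrho_0^{(k-i)\gamma}$. But the hypothesis $\vep/R\in(0,\delta^2)$ in Theorem \ref{lem:ImpFlat3} must be checked with the \emph{same} parameter $\delta'$ that appears in the flatness hypothesis \eqref{eq:FlatnessScaleR}, not with the original $\delta$; your bookkeeping paragraph verifies only $R_0^i>\delta^{-2}$. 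The correct requirement at step $i$ is $R_0^{-i}<\delta^2\varrho_0^{2\gamma(k-i)}$, i.e.\ (with $\gamma=1/2$) $i>\tfrac12\big(k+|\log\delta^2|/\log R_0\big)$. Hence your induction can only be carried down to the critical mesoscale $i\approx (k+j_\delta)/2$ and cannot reach $j_\delta$ when $k$ is large; this is precisely the content of Corollary \ref{cor:ImpFlat} with its constraint \eqref{eq:ImpFlatAssnk}, and the impossibility of iterating with the improved flatness below that scale is the very issue discussed in the introduction (and later resolved by the sliding argument of Section \ref{Sec:AsymFlatImplies1D}). The strengthened statement you induct on is therefore not established by this argument, and one should not expect it to hold in general for $\vep=1$ down to scale $R_0^{j_\delta}$.

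The fix is short and is what the paper does: do \emph{not} keep the gain. At each scale $i\ge j_\delta$ apply Theorem \ref{lem:ImpFlat3} with flatness parameter $\delta$ itself (the needed condition is then $1/(\delta^2R_0^i)<1$, which holds for all $i\ge j_\delta$ by the definition of $j_\delta$), obtain ${\rm Flat}_1(\nu_{i-1},\varrho_0^{\gamma}\delta,R_0^{i-1})$, and immediately weaken it back to ${\rm Flat}_1(\nu_{i-1},\delta,R_0^{i-1})$ using the monotonicity in the flatness parameter that you already noted; then repeat. Since you state that monotonicity explicitly, the repair is within reach, but as written the verification of the hypothesis of Theorem \ref{lem:ImpFlat3} is done with the wrong parameter and the induction breaks below the critical mesoscale.
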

\begin{proof} The proof is by iterating Theorem \ref{lem:ImpFlat3}. Indeed, thanks to  \eqref{eq:PresFlatAssj} we have 
\begin{equation}\label{eq:PresFlatCondEpsDel}
%\frac{\varepsilon/R}{\delta^2} = 
\frac{1}{\delta^2 R_0^i} \leq \frac{1}{\delta^2 R_0^{j_\delta}} < 1 \qquad \mbox{for all } i \ge j_\delta\,.
\end{equation}
Thanks to Theorem \ref{lem:ImpFlat3}   if $u$ satisfies ${\rm Flat_1}(\nu_i, \delta, R_0^i)$  for some $\nu_i \in \Sf^{N-1}$, and $i\ge j_\delta$ then  $u$ satisfies ${\rm  Flat_1}(\nu_{i-1}, R_0^\gamma \delta, R_0^{i-1})$ for some  $\nu_{i-1} \in \Sf^{N-1}$. In particular  $u$ satisfies ${\rm  Flat_1}(\nu_{i-1}, \delta, R_0^{i-1})$. Iterating this the corollary follows.
\end{proof}
\begin{cor} [Improvement of flatness]\label{cor:ImpFlat}
Fix $\gamma=1/2$,  and let $\delta_0>0$ and $\varrho_0\in (0,1/4)$ be the constants as in Theorem \ref{lem:ImpFlat3}. Let $R_0: = 1/\varrho_0$. 
Let $k,n \in \mathbb{N}$ and $\dd > 0$ such that
\begin{equation}\label{eq:ImpFlatAssnk}
(1+2\gamma) n \leq  k - \tfrac{ |\log \delta^2 |}{\log R_0}.
\end{equation}
Let $u:\RR^N\to\RR_+$ be a critical point of $\EE$  with $u(0) \in [\vartheta_1, \vartheta_2]$. If $u$
satisfies ${\rm Flat_1}(\nu_k, \delta, R_0^k)$   for some $\delta\in (0,\delta_0)$, $k \geq j_\delta$ and $\nu_k \in \Sf^{N-1}$,  for every  $i$ such that $k-n\leq i \leq k$,  $u$ satisfies ${\rm Flat}_1(\nu_i,  R_0^{-\gamma(k-i)} \delta, R_0^{i})$ for some $\nu_i \in \Sf^{N-1}$.
\end{cor}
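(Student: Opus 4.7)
The plan is to iterate Theorem~\ref{lem:ImpFlat3} exactly $n$ times, descending from scale $R_0^k$ by a factor $\varrho_0 = 1/R_0$ at each step and keeping track of the improving flatness parameter. Since $\EE = \EE_1$ we always have $\varepsilon = 1$, and the requirement $u(0) \in [\vartheta_1, \vartheta_2]$ is exactly \eqref{eq:0FBpoint}. A direct comparison of \eqref{eq:FlatnessScaleR}--\eqref{eq:FlatnessScaleRho0R} with Definition~\ref{def:Fla1Flat2} shows that Theorem~\ref{lem:ImpFlat3} can be rephrased as follows: if $u$ satisfies ${\rm Flat_1}(e_N, \delta, R)$ and $1/R < \delta^2$, then $u$ satisfies ${\rm Flat_1}(\nu, R_0^{-\gamma}\delta, \varrho_0 R)$ for some $\nu \in \Sf^{N-1}$.

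Equipped with this, I would argue by finite induction on $m \in \{0, 1, \ldots, n\}$ that there exists $\nu_{k-m} \in \Sf^{N-1}$ for which $u$ satisfies ${\rm Flat_1}(\nu_{k-m}, \delta_m, R_0^{k-m})$, where $\delta_m := R_0^{-\gamma m}\delta$. The base case $m = 0$ is the hypothesis. For the inductive step from $m-1$ to $m$ (with $1 \leq m \leq n$), after rotating coordinates so that $\nu_{k-m+1}$ becomes $e_N$, I apply the rephrased Theorem~\ref{lem:ImpFlat3} at radius $R := R_0^{k-m+1}$ with flatness parameter $\delta_{m-1} \leq \delta < \delta_0$. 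The output $\nu$ is then declared to be $\nu_{k-m}$, and the conclusion of the theorem is precisely ${\rm Flat_1}(\nu_{k-m}, R_0^{-\gamma}\delta_{m-1}, R_0^{k-m}) = {\rm Flat_1}(\nu_{k-m}, \delta_m, R_0^{k-m})$.

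The only point to verify at each step is the smallness condition $1/R < \delta_{m-1}^2$. With $R = R_0^{k-m+1}$ and $\delta_{m-1} = R_0^{-\gamma(m-1)}\delta$, taking $\log_{R_0}$ shows this is equivalent to
\begin{equation*}
k + 1 + 2\gamma - (1+2\gamma) m > \frac{|\log \delta^2|}{\log R_0}.
\end{equation*}
For $m \in \{1, \ldots, n\}$ the left-hand side is minimized at $m = n$, where it equals $k - (1+2\gamma)n + (1 + 2\gamma)$; by the hypothesis \eqref{eq:ImpFlatAssnk} this is at least $|\log \delta^2|/\log R_0 + (1+2\gamma)$, which is strictly greater than $|\log \delta^2|/\log R_0$. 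Hence the iteration can be carried out for the full $n$ steps. There is no genuine obstacle: assumption \eqref{eq:ImpFlatAssnk} is exactly calibrated so that the smallness of $1/R$ survives the $R_0^{-\gamma}$ improvement of flatness at every scale.
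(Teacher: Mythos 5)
Your proof is correct and follows essentially the same route as the paper: iterate Theorem \ref{lem:ImpFlat3} one scale at a time, with the hypothesis \eqref{eq:ImpFlatAssnk} guaranteeing that the smallness condition $\vep/R<\delta^2$ (here $\vep=1$, $R=R_0^{k-m+1}$, flatness $R_0^{-\gamma(m-1)}\delta$) holds at every step. Your verification of that condition is in fact spelled out more explicitly than in the paper's own two-line proof, and it checks out.
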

\begin{proof} 
The proof is by iterating Theorem \ref{lem:ImpFlat3}. Indeed, thanks to \eqref{eq:ImpFlatAssnk} we have 
\begin{equation}\label{eq:ImpFlatCondEpsDel}
%\frac{\varepsilon/R}{\delta^2} = 
\frac{1}{ \big(R_0^{-\gamma(k-i)}  \delta\big)^2 R_0^i} \leq \frac{1}{\delta^2 R_0^{k-n -2\gamma n}} < 1 \qquad \mbox{for all } i \ge j_\delta\,.
\end{equation}
Thanks to Theorem \ref{lem:ImpFlat3}   if $u$ satisfies ${\rm Flat_1}(\nu_i,  R_0^{-\gamma(k-i)}\delta, R_0^i)$  for some $\nu_i \in \Sf^{N-1}$ (which is satisfied by assumption for $i=k$), and $i\ge k-n$ then  $u$ satisfies ${\rm  Flat_1}(\nu_{i-1},  R_0^{-\gamma(k-i+1)} \delta, R_0^{i-1})$ for some $\nu_{i-1} \in \Sf^{N-1}$. Iterating this, the corollary follows.
\end{proof}
\begin{thm}\label{thm:AsymFlatImplies1D} Let $\gamma= 1/2$,  and let $\varrho_0\in (0,1/4)$ be the constant in Theorem \ref{lem:ImpFlat3}, and  $R_0 :=1/\varrho_0\ge 2$. 

Suppose that  $u:\RR^N\to\RR_+$ is a critical point of $\EE$ with $0 \in \{ \vth_1 \leq u \leq \vth_2\}$ and let $\{u_{\varepsilon}\}_{\vep \in (0,1)}$ be a blow-down family, where 
$u_\vep : = \vep u(\, \cdot\, /\vep)$.

Set $\vep_j := R_0^{-j}$ and assume there exist $\nu \in \Sf^{N-1}$, and a sequence $j_l \to +\infty$ and $\dd_{l} \to 0$  (as $l \to +\infty$) for which
\begin{equation}\label{eq:1DSymAss1}
|u_{\varepsilon_{j_l}} - (\nu \cdot x )_+| \leq \dd_l \quad \text{ in } B_2,
\end{equation}
and
\begin{equation}\label{eq:1DSymAss2}
\{ x\,  :\,   \nu \cdot x \leq -\dd_{l} \} \subset \{ u_{\vep_{j_l}} \leq \vth_1\vep_{j_l} \} \subset \{ u_{\vep_{j_l}} \leq \vth_2\vep_{j_l} \} \subset \{x\, :\,  \nu \cdot x \leq \dd_l \} \quad \text{ in } B_2,
\end{equation}
for every $l \in \NN$. Then $u$ is $1D$.
\end{thm}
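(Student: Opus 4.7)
The strategy is to combine iterated applications of the improvement of flatness with a sliding argument in the spirit of Berestycki--Caffarelli--Nirenberg \cite{BerCaffaNiren1997:art,DipierroEtAl2020:art}. After rotating coordinates so that $\nu = e_N$, I would first undo the scaling $u_{\vep_{j_l}}(x) = \vep_{j_l} u(x/\vep_{j_l})$ with $\vep_{j_l} = R_0^{-j_l}$ in \eqref{eq:1DSymAss1}--\eqref{eq:1DSymAss2} to obtain
\[
|u(y) - (y_N)_+| \le \delta_l R_0^{j_l} \quad \text{in } B_{2R_0^{j_l}},
\]
together with the corresponding inclusion of the transition region. These two facts imply that $u$ satisfies ${\rm Flat_1}(e_N, C\delta_l, R_0^{j_l})$ for some absolute constant $C$ (bounding $|u - y_N|$ by $2|u - (y_N)_+|$ on the negative side). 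Corollary \ref{cor:PresFlat} then propagates the flatness ${\rm Flat_1}(\nu_i^{(l)}, C\delta_l, R_0^i)$ down to every $j_{C\delta_l} \le i \le j_l$, for possibly varying unit vectors $\nu_i^{(l)} \in \Sf^{N-1}$.

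The second step is to invoke Corollary \ref{cor:ImpFlat} with the largest admissible iteration count $n_l := \lfloor (j_l - j_{C\delta_l})/(1+2\gamma)\rfloor$, yielding for every $i$ with $i_l := j_l - n_l \le i \le j_l$
\[
u \text{ satisfies } {\rm Flat_1}\bigl(\nu_i^{(l)},\, R_0^{-\gamma(j_l - i)} C\delta_l,\, R_0^i\bigr).
\]
At the critical mesoscale $i = i_l \approx (j_l + j_{C\delta_l})/2$ (so $R_0^{i_l} \approx R_0^{j_l/2}$) the flatness drops to $\tilde\delta_l := R_0^{-\gamma n_l} C\delta_l$, which tends to $0$ as $l \to \infty$ while $R_l := R_0^{i_l} \to \infty$. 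Summing the normal-drift estimate $|\nu_{i}^{(l)} - \nu_{i+1}^{(l)}| \le \sqrt{2}N R_0^{-\gamma(j_l - i - 1)} C\delta_l$ provided by Theorem \ref{lem:ImpFlat3} gives a convergent geometric series, so $|\nu_{i_l}^{(l)} - e_N| \le C'\delta_l \to 0$; in particular $\nu_{i_l}^{(l)} \cdot e \to 0$ for any $e \in e_N^\perp$.

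The final, hardest step is the sliding argument. For any unit vector $e \in e_N^\perp$ and any $t \in \RR$, set $U(x) := u(x+te) - u(x)$. By translation invariance $u(\cdot + te)$ is also a critical point of $\EE$, so $U$ satisfies the linear elliptic equation $\Delta U = c(x) U$ with bounded coefficient $c(x) = \tfrac12 \Phi''(\xi(x))$. The mesoscale flatness just established controls $|U|$ in the bulk $\{u \ge \vartheta_1,\, u(\cdot+te) \ge \vartheta_1\} \cap B_{R_l}$ by $2\tilde\delta_l R_l + |t|\,|\nu_{i_l}^{(l)} \cdot e| = o(R_l)$, while the exponential decay estimate of Lemma \ref{lem:ExpDecayu} (suitably rescaled) handles the complementary region. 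Sliding the family $u(\cdot + se)$ in $s \in \RR$ and using a strict super-/sub-solution comparison at a first contact point against the 1D profiles $w_\varepsilon^{\pm \varepsilon}$ of Remark \ref{rem:Choicettau} (as in \cite{DipierroEtAl2020:art}), I would conclude that $u(\cdot + se) \equiv u$ for every $s \in \RR$; applying this to every $e \perp e_N$ forces $u$ to depend only on $x_N$, and Lemma \ref{lem:1DSolution}(i) together with the flatness then identifies $u$ with the profile in \eqref{wtioewhot}. The main obstacle will be precisely this last step: the improvement-of-flatness iteration saturates at the mesoscale $R_l \sim R_0^{j_l/2}$ because of the constraint $\varepsilon/R < \delta^2$ in Theorem \ref{lem:ImpFlat3} and never reaches bounded scales, so the sliding is exactly what is needed to convert the mesoscale asymptotic information into the desired pointwise rigidity.
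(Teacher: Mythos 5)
Your first half (rescaling \eqref{eq:1DSymAss1}--\eqref{eq:1DSymAss2} into ${\rm Flat}_1$ at the scales $R_0^{j_l}$, propagating with Corollary \ref{cor:PresFlat}, then running Corollary \ref{cor:ImpFlat} down to the saturation mesoscale $R_0^{j/2}$ and summing the drift of the normals) does parallel Steps 1--2 of the paper's proof, but with two caveats. First, the passage to ${\rm Flat}_1$ cannot be done by ``bounding $|u-y_N|$ by $2|u-(y_N)_+|$'' (false for $y_N<0$); it needs the inclusion \eqref{eq:1DSymAss2} (and the paper even runs a small barrier argument there). Second, and much more importantly, the mesoscale flatness must be established \emph{recentred at every point} $z\in\{\vth_1\le u\le\vth_2\}$, with constant and limiting direction $e_N$ \emph{independent of $z$}; this is the content of the paper's Step 2 and it is what produces the global geometric control used afterwards: the transition layer is trapped in a tube $\{|x_N-G(x')|\le C\}$ with $|G(x')-G(y')|\le C\sqrt{|x'-y'|}$, together with $|\nabla u|\le C$ and gradient decay in $\{u>\vth_2\}$. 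Your version, stated only in balls $B_{R_l}$ around the origin, gives no information far from the origin and cannot feed any global maximum-principle argument.

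The genuine gap is the sliding step, which you yourself flag as the crux but do not supply, and the mechanism you sketch would not work. Knowing $|u(\cdot+te)-u|=o(R_l)$ in $B_{R_l}$ plus exponential smallness of $u$ below the layer gives no way to conclude $u(\cdot+te)\equiv u$; and a ``first contact point against the $1$D profiles $w^{\pm\vep}$'' is not available, because the relevant infimum over the unbounded domain need not be attained. The paper's argument is structured precisely to overcome this: (i) for \emph{tilted} directions $e=(e',\sigma)$ with $\sigma>0$, the tube/graph control is used to prove $u(\cdot-\lambda e)\le u$ for all $\lambda\ge\lambda_\sigma$, via the comparison principle in unbounded domains whose complement contains a cone (the BCN lemma), after splitting $\RR^N=\{u\ge\vth_2\}\cup\{u^\lambda<\vth_1\}$ and using the monotonicity of $\Phi'$ on $(0,\vth_1)$; (ii) the inequality is reduced to the tube $\mathcal G$; (iii) $\lambda_\ast$ is pushed to $0$ by a translation--compactness argument: if $\lambda_\ast>0$, near-contact points $x_j$ yield a limit $\overline u$ that is $\lambda_\ast$-periodic in the direction $e$, contradicting the graph-type localization \eqref{eq:1DSYmLastInclLevelth} of its level set; (iv) one then gets $\partial_e u\ge0$ for every $e=(e',\sigma)$, $\sigma>0$, and letting $\sigma\downarrow0$ gives $\partial_{e'}u=0$ for all horizontal $e'$, i.e.\ $u$ is $1$D (your target identity $u(\cdot+se)\equiv u$ is the outcome of this limit, not of a direct contact argument). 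None of (i)--(iii) --- the initialization at large $\lambda$, the cone-type comparison in unbounded domains, and the compactness/periodicity contradiction replacing the missing touching point --- appears in your proposal, and these are exactly the steps that convert the saturated mesoscale information into rigidity.
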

\begin{proof} 

Throughout the proof $\delta_0$ will denote the constant of Theorem \ref{lem:ImpFlat3}. Observe that, by possibly replacing $\delta_l$ by some sequence with slower convergence towards $0$, we may assume without loss of generality that $\vth_2 \vep_{j_l} \leq \dd_l/2$.

Up to a rotation of the coordinate system, we may assume $\nu = e_N$.
The proof is divided in several steps as follows.% \color{blue} Write something to deal with the subsequence \normalcolor

\smallskip

\emph{Step 1.}  Fix $\dd\in (0, \dd_0)$ to be chosen later. We first show that 
\begin{equation}\label{eq:AsymptFlatAssScaled}
u \quad \mbox{satisfies}\quad {\rm Flat}_1(\nu_{j},  \delta, R_0^j) \quad \forall j\ge j_\delta : = \Big\lceil  \frac{|\log (\delta^2)|}{\log R_0} \Big\rceil ,
\end{equation}
for some $\nu_j\in \Sf^{n-1}$. 

By \eqref{eq:1DSymAss1}  ($\nu=e_N$), we have 
\begin{equation}\label{gwjiowhtiewh0}
(x_N)_+ -\dd_l \leq u_{\varepsilon_{j_l}} \leq (x_N)_+ + \dd_l \quad \text{ in }B_2.
\end{equation} 
 
 Let us show that this implies 
\begin{equation}\label{gwjiowhtiewh}
\begin{array}{rlll}
{} &\hspace{-5pt} u_{\vep_{j_l}}(x) -  x_N \leq \delta_l  \quad &\mbox{in }& B_1\cap \{u_{\vep_{j_l}} \geq \vartheta_1{\vep_{j_l}}\}\\
-\delta _l \leq&\hspace{-5pt}  u_{\vep_{j_l}}(x) - x_N \quad   &\mbox{in }&B_1,
\end{array} 
\end{equation}
for all $l$  sufficiently large.

Indeed on the one hand, \eqref{gwjiowhtiewh0} implies $u_{\varepsilon_{j_l}} \geq x_N -\dd_l$  in $B_1$ (for $l$ large),  which gives the inequality from below in \eqref{gwjiowhtiewh}.

To show the one from above, we set $v := u_{\vep_{j_l}} - x_N -2\dd_l$ and we show $v \leq 0$ in $B_1 \cap \{u_{\vep_{j_l}} \geq \vartheta_1\vep_{j_l} \}$ using a comparison argument. Thanks to \eqref{gwjiowhtiewh0}, using $(x_N)_+ -\dd_l\le x_N$  in $\{x_N\ge -\delta_l\}$ we find (using $\vth_2 \vep_{j_l} \leq \dd_l/2$)
\[
v  \leq  u_{\vep_{j_l}} - (x_N)_+ -\dd_l  \le \vth_2\vep_{j_l} - \dd_l  \leq  -\tfrac{\dd_l}{2} \quad \text{ in } B_2 \cap \{u_{\vep_{j_l}} \leq \vth_2\vep_{j_l}\} \cap \{ x_N \geq - \dd_l \},
\]
for every $l$ large enough.
Further, \eqref{gwjiowhtiewh0} automatically implies $v \leq -\dd_l \le 0$ in $B_2 \cap \{ x_N \geq 0\}$, since $(x_N)_+ = x_N$ there. 
Also,  by \eqref{gwjiowhtiewh0} again, $v \leq \dd_j$ in $\overline{B}_2\cap \{u_{\vep_{j_l}} \ge \vth_2\vep_{j_l}\} \cap\{|x_N| \leq \dd_j\}$.

On the other hand, $\Delta v = \Delta u_{\vep_{j_l}}= \tfrac12\Phi'_{\vep_{j_l}}(u_{\vep_{j_l}}) \geq 0$ in $B_2$ and thus the function
\[
\underline{v} := \frac{v}{\dd_l} + A  x_N ^2
\]
satisfies
\[
\begin{aligned}
\begin{cases}
\Delta \underline{v} \geq 2A  \quad &\text{ in } B_2\cap\{ -\dd_l\le x_N<0\} \\
\underline{v} \leq -\tfrac12  + A\dd_l^2  \quad &\text{ in } B_2\cap\{x_N = -\dd_l\} \\
\underline{v} \leq 0  \quad &\text{ in } B_2\cap\{x_N = 0\} \\
\underline{v} \leq 1+ A\dd_l^2  \quad &\text{ in } \partial B_2\cap\{-\dd_l\le x_N<0\},
\end{cases}
\end{aligned}
\]
for every $A > 0$. Now, consider the function $\overline{h}_{x_0}(x) = \tfrac{A}{N} |x-x_0|^2$. For every $x_0 \in B_1\cap\{-\dd_l < x_N  < 0\}$, we have
\[
\overline{h}_{x_0} \geq \tfrac{A}{N}  \quad \text{ in } \partial B_2 \cap\{ -\dd_l \le x_N \le 0\},
\]
and taking $A := 2N$ , we have $\overline{h}_{x_0} \geq 2$ in $\partial B_2 \cap\{ -\dd_l \le x_N \le 0\}$. Then, for $l$ large, we have $A\dd_l^2 \leq 1$ and 
\[
\begin{aligned}
\begin{cases}
\Delta \underline{v} \geq 2A = \Delta \overline{h}_{x_0}  \quad &\text{ in } B_2 \cap\{ -\dd_l\le x_N<0\} \\
\underline{v} \leq 0 \leq \overline{h}_{x_0}  \quad &\text{ in } B_2 \cap \partial\{ -\dd_l\le x_N<0\} \\
\underline{v} \leq 2 \leq \overline{h}_{x_0}  \quad &\text{ in } \partial B_2 \cap \{ -\dd_l\le x_N<0\}.
\end{cases}
\end{aligned}
\]
Then, by the maximum principle we obtain $\underline{v} \leq \overline{h}_{x_0}$. Consequently, since $\overline{h}_{x_0}(x_0) = 0$ and $x_0$ is arbitrary in $B_1 \cap \{-\dd_l < x_N  < 0\}$, we have $\overline{v} \leq 0$ in $B_1 \cap  \{-\dd_l < x_N  < 0\}$ and so, by the definition of $\underline{v}$, we obtain $v \leq 0$ in $B_1 \cap \{-\dd_l < x_N  < 0\}$. This proves \eqref{gwjiowhtiewh}. In other words, after scaling we have shown that  \eqref{eq:AsymptFlatAssScaled}  holds for $j= j_l$ and $\nu_{j_l}=\nu$, provided that $l$ is sufficiently large. Hence, as a consequence of  Corollary \ref{cor:PresFlat}  we obtain that  that   \eqref{eq:AsymptFlatAssScaled} holds for every integer $j$ such that $j_\dd \leq j \leq j_l$ for some $\nu_j \in \Sf^{N-1}$. Observing that $j_l$ can  be taken arbitrarily large  concludes the proof of \eqref{eq:AsymptFlatAssScaled}.

\smallskip

\emph{Step 2.} In this second step, we prove that there exists $C \geq 1$ such that for every $z \in \{\vth_1 \leq u \leq \vth_2\}$ and every $R \geq C$
\begin{equation}\label{eq:AsymptFlat_En}
u(z+\,\cdot\,) \quad \mbox{satisfies}\quad {\rm Flat}_1(e_N,  CR^{-1/2}, R) \qquad \forall R\ge C,
\end{equation}

Note that this is a really strong information since the constant $C$ and the direction $e_N$ of flatness are independent of  $z$, which varies in an unbounded set!

To obtain \eqref{eq:AsymptFlat_En}, we first show the existence of some $k_0$ (independent of $z$) such that for every $k \geq k_0$ and every $z \in \{\vartheta_1\le u\le \vartheta_2\}$, there are $\nu_{z,k} \in \Sf^{N-1}$ such that, for all $k\ge k_0$,
\begin{equation}\label{eq:AsymptFlatallz}
u(z+ \,\cdot\,) \quad \mbox{satisfies}\quad {\rm Flat}_1(\nu_{z,k},  \delta_0, R_0^k) 
\end{equation}
for some $\nu_{z,k}\in \Sf^{n-1}$. 
Indeed, given $z \in \{\vartheta_1\le u\le \vartheta_2\}$ choose  $i\in \mathbb N$ such that  $|z|\le \frac{\dd_0}{2} R_0^{i}$.  Take $j = i+1$ in \eqref{eq:AsymptFlatAssScaled}, and  choose  $\delta$  such that $2\dd R_0 \leq \dd_0$. We then have
\begin{equation}\label{whgtwhgohwoh}
\begin{array}{rlll}
{} &\hspace{-5pt} u(x) -  \nu_{i+1} \cdot x  \leq  \frac{\dd_0}{2} R_0^{i}  \quad &\mbox{in }& B_{R_0^{i+1}}(0)\cap \{u \geq \vartheta_1\}\\
-\frac{\dd_0}{2} R_0^{i} \leq&\hspace{-5pt}  u(x) - \nu_{i+1}\cdot x \quad   &\mbox{in }&B_{R_0^{i+1}}(0) .
\end{array} 
\end{equation}
%|u(x) - \nu_{i+1} \cdot x| \leq \delta R_0^{i+1} \leq \frac{\dd_0}{2} R_0^{i} \quad \text{ in } B_{R_0^{i+1}} \cap \{u \geq \vartheta_1 \}.
%\]
%
Now since $|z|\le \frac{\dd_0}{2} R_0^{i}$ and $R_0\ge 2$ we have $B_{R_0^i}(z) \subset B_{R_0^{i+1}}$ and 
\[
|u(x) - \nu_{i+1} \cdot (x-z)| \leq |u(x) - \nu_{i+1} \cdot x|  + |z| \le \dd_0R_0^{i}   \quad \text{ in } B_{R_0^{i}}(z). 
\]
Thus, \eqref{whgtwhgohwoh} implies
\[
\begin{array}{rlll}
{} &\hspace{-5pt} u(x) -  \nu_{i+1} \cdot x  \leq \dd_0 R_0^{i}  \quad &\mbox{in }& B_{R_0^{i}}(z)\cap \{u \geq \vartheta_1\}\\
- \dd_0 R_0^{i} \leq&\hspace{-5pt}  u(x) - \nu_{i+1}\cdot x \quad   &\mbox{in }&B_{R_0^{i}}(z) .
\end{array}
\] 
In other words, setting $\nu_{k,z} := \nu_{i+1}$,  we see that \eqref{eq:AsymptFlatallz} is satisfied for  $k=i$ large enough (where $i$ depends on $z$).
But then thanks to Corollary \ref{cor:PresFlat} (applied with $\delta = \delta_0$ and to the ``translated function'' $u(z+\,\cdot\,)$)  we obtain that  \eqref{eq:AsymptFlatallz} holds  for all $k \ge k_0 := j_{\dd_0}$.

We will now use \eqref{eq:AsymptFlatallz} and Corollary \ref{cor:ImpFlat} (applied again to the translated function $u(z+\,\cdot\,)$)  to show \eqref{eq:AsymptFlat_En}. Indeed, for given $j \in \mathbb{N}$ large enough, set
\[
n : = \left\lfloor \frac{j- |\log \delta^2|/\log R_0}{2\gamma} \right\rfloor
\]
and 
\[
k :=  j +n
\]
 Then,
\begin{equation}\label{eq:Sym1Propn}
(1+2\gamma)n =  n + 2\gamma\bigg\lfloor \frac{j- |\log \delta^2|/\log R_0}{2\gamma}\bigg\rfloor    \le k  - \frac{|\log \delta_0^2|}{\log R_0}.
\end{equation}

The above inequality implies that \eqref{eq:ImpFlatAssnk} is satisfied.  By \eqref{eq:AsymptFlatallz}  (since we assume that  $j\ge C$ sufficiently large we have $k \ge j\ge  k_0$), we may apply Corollary \ref{cor:ImpFlat}  to $u(z +\,\cdot\,)$ to obtain that 
$u(z+\cdot)$ satisfies ${\rm Flat}_1(\nu_{z,i},  R_0^{-\gamma(k-i)} \delta_0, R_0^{i})$ for some $\nu_{z,i} \in\mathbb S^{N-1}$  for all $i = j,j+1, \dots, j+n$ (in particular for $i=j$).
Hence using the definition of ${\rm Flat}_1$ and that $k-i = n \ge \frac{j}{2\gamma} -C$, we obtain  
\[
\delta_0 R_0^{-\gamma(k-j)} R_0^{j}  \le \delta_0 R_0^{j-\gamma n} \le C R_0^{j(1-1/2)}
\]
and
\begin{equation}\label{eq:Sym1DFirstBoundImp}
\begin{array}{rlll}
 &\hspace{-5pt} u(x) -  \nu_{z,i}\cdot(x-z)\leq C R_0^{j/2} \quad &\mbox{in }& B_{R_0^j}(z)\cap \{u \geq \vartheta_1\}\\
-CR_0^{j/2} \leq&\hspace{-5pt}  u(x) -  \nu_{z,i}\cdot(x-z) \quad   &\mbox{in }&B_{R_0^j}(z).
\end{array} 
\end{equation}
Now, on the one hand, as a consequence of \eqref{eq:Sym1DFirstBoundImp}, we have 
\[
\max\big(0 \ ,\  \nu_{z,j}  \cdot x - CR_0^{j/2} \big) \le   u(z+x)  \le \max\big( \vartheta_1\  ,\  \nu_{z,j}  \cdot x + CR_0^{j/2}\big)  \quad \mbox{in }  B_{R_0^j},
\]
and thus, using this in two consecutive scales, we obtain 
\[
\max\big(0 \ ,\  \nu_{z,j}  \cdot x - CR_0^{j/2} \big)   \le \max\big( \vartheta_1\  ,\  \nu_{z,j+1}  \cdot x + CR_0^{(j+1)/2}\big)  \quad \mbox{in }  B_{R_0^j}. 
\]
This implies (for $j$ large) 
\begin{equation}\label{hgiohoiwhtiow}
\big| \nu_{z,j}  - \nu_{z,j+1} \big| \le  C(R_0) R_0^{-j/2},
\end{equation}
where $C(R_0)$ is independent of $z$ an $j$. This shows that $\nu_{z,j} \to \nu_z$ as $j\to \infty$ for all $z$. On the other hand, since for every two pair of points $z_1$,$z_2$ \eqref{eq:Sym1DFirstBoundImp}  applied at a scales $R_0^j>>|z_1-z_2|$ implies $(\nu_{z_1,j}-\nu_{z_2,j})\to 0$, we see that $\nu_z\equiv \nu_*$ for all $z$, where $\nu_*$ is independent of $z$.  On the other hand, assumption \eqref{eq:1DSymAss1}  (where $\nu=e_N$ as said in the beginning of the proof), forces $\nu_* =e_N$
and hence $\lim_{j\to \infty}  \nu_{z,j} = e_N$ for all $z$.
Finally, using again \eqref{hgiohoiwhtiow}, triangle inequality, and summing the geometric series we obtain
\[
\big|\nu_{z,j}  - e_N\big| \le\big|\nu_{z,j}  - \lim_{j\to \infty} \nu_{z,j}\big| \le   C(R_0)  \sum_{l=j}^{\infty } R_0^{-l/2} \le C R_0^{-j/2}.
\]
 for all $z \in \{\vth_1 \leq u \leq \vth_2\}$.
Combining this information with \eqref{eq:Sym1DFirstBoundImp} we conclude the proof of \eqref{eq:AsymptFlat_En}.% {\color{red} I find this part a little bit hard. Maybe we can add some more details.}

\smallskip

\emph{Step 3.} We now observe that  \eqref{eq:AsymptFlat_En} has two significant consequences. First, it implies the existence of a  function $G:\mathbb{R}^{N-1} \to \mathbb{R}$ with $G(0)=0$ satisfying
\begin{equation}\label{eq:Sym1DLipConstG}
|G(x') - G(y')| \leq C  \sqrt{|x' - y'|} , \quad \forall x', y' \in \RR^{N-1}
\end{equation}
and
\begin{equation}\label{eq:Sym1DFBBoundsGraph}
\{x_N \leq G(x') - C \} \subset \{u \leq \vartheta_1 \} \subset \{u \leq \vartheta_2 \} \subset \{ x_N \leq G(x') + C \} \quad \text{ in } \mathbb{R}^N.
\end{equation}
Second, since $u - x_N$ is harmonic in $\{u > \vartheta_2\}$, standard elliptic estimates yield
\[
\sup_{x \in B_{r/2}(y)} |\nabla (u(x) - x_N)| \leq \frac{c_N}{r} \sup_{x \in B_r(y)} |u(x) - x_N| \leq \frac{c_N C}{r} R^{j/2},
\]
for every $B_r(y) \subset \{u > \vartheta_2\} \cap B_{R^j}$. Consequently, for every $j \in \mathbb{N}$ and every $y$ such that $B_{R^j/2}(y) \subset \{u > \vartheta_2\} \cap B_{R^j}$, we have
\begin{equation}\label{eq:1DSymEstnbu-x}
\sup_{x \in B_{R^j/4}(y)} |\nabla (u(x) - x_N)| \leq c_N C R^{-j/2}.
\end{equation}

This easily implies that 
\begin{equation}\label{eq:whtihwiohw}
|\nabla u|  \le C \quad \mbox {in }  \RR^N 
\end{equation}

Indeed, if $x = (x',x_N)$ is a point in $\RR^N$ and  and let $R_\circ \ge C$ to be chosen. We consider two complementary cases: either
 $x_N - G(x')\le R_\circ$ or $x_N - G(x')> R_\circ$. In the first case, by  \eqref{eq:AsymptFlat_En} with $R = 2R_\circ$, we obtain  $|u| \le C$ in $B_{R_\circ}(x)$ (with a possibly larger $C$). Now, since $u$ is a bounded solution of the semilinear equation 
$\Delta u = \tfrac{1}{2}\Phi'(u)$  standard elliptic estimates yield $|\nabla u(x)|\le C$. 
In the second case, using  \eqref{eq:Sym1DFBBoundsGraph}-\eqref{eq:Sym1DLipConstG} we obtain that, if $R_\circ$ is chosen large enough and $R:={\rm dist}(x, \partial \{u>\vth_2\}) \ge R_\circ$, then it also follows $|\nabla u(x)|\le C$, thanks to \eqref{eq:1DSymEstnbu-x}.

\smallskip

\emph{Step 4.}  We now perform a sliding argument \`a la Caffarelli-Berestycki-Nirenberg. We fix $\sigma > 0$, $e' \in \Sf^{N-1}\cap\{x_N = 0\}$, and define, for any given $\lambda>0$,
\begin{equation}\label{eq:1DSymDefeulambda}
e := (e',\sigma), \qquad u^\lambda(x) := u(x - \lambda e).
\end{equation}
Choose  $\lambda_\sigma > 0$  such that $C \sqrt{\lambda_\sigma} + 2C  \le \sigma \lambda_\sigma$, where $C$ is the constant in \eqref{eq:Sym1DFBBoundsGraph}-\eqref{eq:1DSymEstnbu-x}. Let us show that
\begin{equation}\label{eq:1DSymSlidindThWeak}
u^\lambda \leq u \quad \text{ in } \mathbb{R}^N \quad \text{ for every } \lambda \geq \lambda_\sigma.
\end{equation}
To prove so, we first observe that,  for every $\lambda \geq \lambda_\sigma$
\begin{equation}\label{eq:1DSymFirstTranLlar}
\{ u \leq \vth_2 \} \subset \{ u^\lambda \leq \vth_1 \}.
\end{equation}
Indeed, let $x \in \{u \leq \vartheta_2\}$ and notice that \eqref{eq:Sym1DLipConstG} yields
\[
\begin{aligned}
(x - \lambda e)_N - G((x - \lambda e)') + C &= x_N - \sigma\lambda - G \big( x' - \lambda e' \big) + C \\
&\leq - \sigma\lambda + G(x') - G \big( x' - \lambda e' \big) + 2C \\
&\leq - \sigma\lambda +C \sqrt{\lambda} + 2C \leq 0,
\end{aligned}
\]
for every $\lambda \geq \lambda_\sigma$, provided  $\lambda_\sigma$ is chosen large enough. 

Now, we set $v := u - u^\lambda$ and we show that $v \geq 0$ in $\mathbb{R}^N$ for every $\lambda \geq \lambda_\sigma$, that is \eqref{eq:1DSymFirstTranLlar}. 

To do so, we first notice that $\RR^N = \Omega_1 \cup \Omega_2 : = \{u\ge \vth_2\} \cup \{u^\lambda <\vth_1\}$ for every $\lambda\ge \lambda_\sigma$, thanks to \eqref{eq:1DSymFirstTranLlar}. 
Further, in the domain $\Omega_1$ the function $v$ satisfies $\Delta v =0$ in $ \{u^\lambda>\vth_2\}$ and $u-u^\lambda \ge \vth_2 -\vth_2 =0$ in $u^\lambda \le\vth_2$. 
Hence the negative part of $v_-$ is subharmonic in $\Omega_1$. 

Also, thanks to \eqref{eq:1DSymFirstTranLlar}, the boundary $ \partial \{ u> \vartheta_2 \}$ of $\Omega_1$ is contained in  $\{ u^\lambda \leq \vartheta_1 \}$ and hence $u-u^\lambda\ge \vth_2-\vth_1>0$ on $\partial \{ u> \vartheta_2 \}$. 
In other words $v_-$ is subharmonic and vanishes on the boundary of $\Omega_1$.
Since ---thanks to \eqref{eq:Sym1DLipConstG} and \eqref{eq:Sym1DFBBoundsGraph}--- the complement of $\Omega_1$ contains a cone with nonempty interior, and ---thanks to \eqref{eq:whtihwiohw} $v$  (and in particular $v_-$) is bounded in all of $\RR^N$, we deduce $v_- =0$  in $\Omega_1$ from  the comparison principle in unbounded domains which contain a cone (see for instance \cite[Lemma 2.1]{BerCaffaNiren1997:art}).

Similarly inside $\Omega_2$, either  $u\ge\vth_1$ and $u^\lambda< \vth_1$ and so $v \ge 0$ or both $u$ and $u^\lambda$ are smaller than $\vth_1$. In that second case, recalling that $\Phi'$ is increasing in $(0, \vth_1)$, we have $\Delta v = \Phi'(u) -\Phi'( u^\lambda) \le 0$ at points where $v = u-u^\lambda \le 0$. Hence $v_-$ is again a subharmonic function in $\Omega_2$. 
Similarly as before we can show that $v_- = 0$ on $\partial\Omega_2$ and that it is bounded. And again the complement of $\Omega_2$ contains a cone, se we may conclude $v_- = 0$ everywhere (that is $v \ge 0$).

\smallskip

\emph{Step 5.}   Let $C$ and $G$ as in \eqref{eq:Sym1DFBBoundsGraph}. Define
\begin{equation}\label{eq:1DSymNewC}
\overline{C} := C + (1+\lambda_\sigma) \|\nabla G\|_{L^\infty(\mathbb{R}^{N-1})},
\end{equation}
and
\begin{equation}\label{eq:1DSymTube}
\mathcal{G} := \big\{ x = (x',x_N) \in \mathbb{R}^N: |x_N - G(x')| \leq \overline{C} \big\}.
\end{equation}
We prove that for every $\lambda > 0$
\begin{equation}\label{eq:1DSymIneqinGC}
u^\lambda \leq u \text{ in } \mathcal{G} \quad \Rightarrow \quad u^\lambda \leq u \text{ in } \mathbb{R}^N.
\end{equation}
In light of \eqref{eq:1DSymSlidindThWeak}, it is enough to treat the case $\lambda \in (0,\lambda_\sigma)$. Following the ideas of \emph{Step 4}, we observe that 
\[
\{x_n \le G(x') -\bar C\} \subset   \{u\le \vth_1\}
\]
Indeed, let  $x$ satisfy
\[
x_N \leq G(x') - \overline{C}.
\]
Consequently, by \eqref{eq:1DSymNewC}, the above inequality and the definition of $e$, we obtain
\[
\begin{aligned}
(x - \lm e)_N - G((x - \lm e)') &\leq x_N - G(x') + \lambda \|\nabla G\|_{L^\infty(\mathbb{R}^{N-1})} \\
&\leq x_N - G(x') + \lambda_\sigma \|\nabla G\|_{L^\infty(\mathbb{R}^{N-1})} \leq - \overline{C} + \overline{C} - C = - C,
\end{aligned}
\]
and thus, by \eqref{eq:Sym1DFBBoundsGraph}, we have $x - \lm e \in \{u \leq \vartheta_1\}$.%, i.e. $u^\lambda \in I_-$, and the proof of \eqref{eq:1DSymI-I+} when $\lambda \in (0,\lambda_\sigma)$ follows. 
In a very similar way, we show
\begin{equation}\label{eq:1DSymInclforComp}
\{ u^\lambda \leq \vartheta_2 \} \subset \{ x_N \leq G(x') + \overline{C} \},
\end{equation}
for every $\lambda \in (0,\lambda_\sigma)$. To complete the proof of \eqref{eq:1DSymIneqinGC}, it is enough to consider $v = u - u^\lambda$, notice that $v \geq 0$ in $\partial \mathcal{G}$ (by assumption) and repeat the arguments of \emph{Step 5}. 
\smallskip

\emph{Step 6.} In this step we show that for every $\lambda > 0$
\begin{equation}\label{eq:1DSymSlidindThGC}
u^\lambda \leq u \quad \text{ in } \mathcal{G}.
\end{equation}
Notice that, as a consequence of \eqref{eq:1DSymIneqinGC}, \eqref{eq:1DSymSlidindThGC} implies that for every $\lambda > 0$
\begin{equation}\label{eq:1DSymSlidindTh}
u^\lambda \leq u \quad \text{ in } \mathbb{R}^N.
\end{equation}
To verify \eqref{eq:1DSymSlidindThGC}, we let
\[
\lambda_\ast := \inf\{ \lambda \geq 0: u^\lambda \leq u \text{ in } \mathcal{G}\} \le \lm_\sigma,
\]
and show that $\lambda_\ast = 0$. Assume by contradiction that $\lambda_\ast \in (0,\lm_\sigma)$.

By definition of $\lambda_\ast$, we have $u^{\lambda_\ast} \leq u$ in $\mathcal{G}$ and, in addtion, there exists $x_j \in \mathcal{G}$ such that $u(x_j) - u^{\lambda_\ast}(x_j) \leq 1/j$.  Set $u_j(x) := u(x + x_j)$, $u_j^{\lambda_\ast}(x) := u^{\lambda_\ast}(x + x_j)$, $v_j := u_j - u_j^{\lambda_\ast}$ and $\mathcal{G}_j:= \mathcal{G} - x_j$.

We have
\[
\mathcal{G}_j = \{(x',x_N) \in \mathbb{R}^N: |x_N - G_j(x')| \leq \overline{C} \},
\]
where $G_j(x') := G(x' + x_j') - x_{j,N}$ (here $x_{j,N} := (x_j)_N$) with
\begin{equation}\label{wthiohoiwhw}
\begin{aligned}
&|G_j(x')| \leq |G(x' + x_j') - G(x_j')| + |G(x_j') - x_{j,N}| \leq C \sqrt{|x'|} + 2\overline{C} \\
&\|\nabla G_j\|_{L^\infty(\mathbb{R}^{N-1})} \leq \|\nabla G\|_{L^\infty(\mathbb{R}^{N-1})},
\end{aligned}
\end{equation}
for every $j$ in view of \eqref{eq:Sym1DLipConstG} and that $x_j \in \mathcal{G}$. As a consequence, we deduce the existence of a locally bounded function $\overline{G}:\RR^N \to \RR$ such that $G_j \to \overline{G}$ locally uniformly in $\RR^N$ and $\mathcal{G}_j \to \overline{\mathcal{G}}$ locally Hausdorff in $\mathbb{R}^N$ (up to subsequence), where $\overline {\mathcal G} : =  \{(x',x_N) \in \mathbb{R}^N: |x_N - \overline G(x')| \leq \overline{C} \}$.

In particular, thanks to \eqref{eq:Sym1DFBBoundsGraph}, we have
\begin{equation}\label{eq:Sym1DFBBoundsGraphBis}
\{x_N \leq \overline{G}(x') - 2\overline{C} \} \subset \{u_j \leq \vartheta_1 \} \subset \{u_j \leq \vartheta_2 \} \subset \{ x_N \leq \overline{G}(x') + 2\overline{C} \} \quad \text{ in } \mathbb{R}^N,
\end{equation}
for every $j$ large enough. On the other hand,  using  \eqref{wthiohoiwhw} for $x'=0$ and recalling \eqref{eq:whtihwiohw} we have 
\[
|u_j(0)| \le 2\overline C\quad \mbox{and} \quad  |\nabla u_j| \le C\quad \mbox{in } \RR^N, 
\]
thus, the sequence $\{u_j\}_{j\in\NN}$ is locally uniformly bounded in $\RR^N$.

Further, since $\Delta u_j = \tfrac12\Phi'(u_j)$ in $\RR^N$ and $\Phi'$ is bounded, standard elliptic estimates and a diagonal argument yield $u_j \to \overline{u}$ in $C^2_{loc}$ as $j \to +\infty$, for some $\overline{u} \in C^2_{loc}(\RR^N)$, up to passing to a subsequence. Similar, $\overline{u}_j \to \overline{u}^{\lambda_\ast}$ in $C^2_{loc}$ as $j \to +\infty$ and, since
\[
\begin{cases}
\Delta v_j = \tfrac{1}{2}(\Phi'(u_j) - \Phi'(u_j^{\lambda_\ast})) \quad &\text{ in } \mathbb{R}^N \\
v_j \geq 0 \quad &\text{ in } \mathcal{G}_j \\
v_j(0) \leq 1/j,
\end{cases}
\]
for every $j \in \mathbb{N}$, $v_j \to \overline{v}$ in $C^2_{loc}$ as $j \to +\infty$. By uniform convergence we have $\overline{v}(0) = 0$, $\Delta \overline u = \tfrac12\Phi'(\overline u)$ in $\RR^N$, and $\overline{v} \geq 0$ in $\overline{\mathcal{G}}$. Therefore, using \eqref{eq:1DSymIneqinGC} applied  to the function $\bar u$ and with $\lambda = \lambda_\ast$, we deduce
\begin{equation}\label{eq:1DSymovwIneq}
\overline{v} \geq 0 \quad \text{ in } \mathbb{R}^N.
\end{equation}
On the other hand,
\[
\Delta \overline{v} = \tfrac{1}{2}(\Phi'(\overline{u}) - \Phi'(\overline{u}^{\lambda_\ast})) \leq \|\Phi\|_{C^{1,1}(\RR)} \overline{v} \quad \text{ in } \mathbb{R}^N,
\]
and so, $\overline{v}(0) = 0$, \eqref{eq:1DSymovwIneq} and the strong maximum principle yield $\overline{v} = 0$ in $\mathbb{R}^N$.  Consequently, for every fixed $x \in \mathbb{R}^N$
\[
\overline{u}(x) = \overline{u}^{\lambda_\ast}(x) = \lim_{j \to +\infty} u^{\lambda_\ast}(x + x_j) = \lim_{j \to +\infty} u(x + x_j - e\lambda_\ast) = \lim_{j \to +\infty} u_j(x - e\lambda_\ast) = \overline{u}(x - e\lambda_\ast),
\]
that is, $\overline{u}$ is $\lambda_\ast$-periodic along the direction $e$.

Now, fix $\vartheta \in (\vartheta_1,\vartheta_2)$ and take $\tilde{x}_j \in \mathcal{G}$ such that $\tilde{x}_{j,N} = x_{j,N}$ and $u(\tilde{x}_j) = \vartheta$, for every $j \in \mathbb{N}$ and set $\hat{x}_j := \tilde{x}_j - x_j$. By \eqref{eq:Sym1DFBBoundsGraph}, \eqref{eq:1DSymNewC} and \eqref{eq:1DSymTube}, we have $|\hat{x}_j| \leq 2 \overline{C}$ and thus, up to passing to a subsequence, $\hat{x}_j \to \hat{x} \in \mathcal{G}$ as $j \to +\infty$, and
\begin{equation}\label{eq:1DSymLimSeqFinal}
\vartheta = \lim_{j \to +\infty} u(\tilde{x}_j) = \lim_{j \to +\infty} u(\hat{x}_j + x_j) = \lim_{j \to +\infty} u_j(\hat{x}_j) = \overline{u}(\hat{x}),
\end{equation}
by uniform convergence. We also have
\begin{equation}\label{eq:1DSYmLastInclLevelth}
\{\overline{u} = \vartheta \} \subset \{x_N \geq -\overline{C} ( 1 + \sqrt{|x'|})\},
\end{equation}
up to taking $\overline{C} > 0$ larger. To see this, we take $y \in \{\overline{u} = \vartheta\}$ and we notice that $y \in \{\vartheta_1 \leq u_j \leq \vartheta_2\}$ for large $j$'s or, equivalently, $y + x_j \in \{\vartheta_1 \leq u \leq \vartheta_2\} \subset \{x_N \geq G(x') - C\}$, in light of \eqref{eq:Sym1DFBBoundsGraph}. This, combined with the fact that $x_{j,N} \leq G(x_j') + \overline{C}$ (since $x_j \in \mathcal{G}$) and \eqref{eq:Sym1DLipConstG} give
\[
y_N \geq G(y' + x_j') - x_{j,N} - C \geq G(y' + x_j') - G(x_j') - \overline{C} - C \geq -c\sqrt{|y'|} - \overline{C} - C,
\]
which is \eqref{eq:1DSYmLastInclLevelth}, up to taking $c > 0$ large enough (depending on $C$ and $\overline{C}$).

To complete the contradiction argument, we notice that by \eqref{eq:1DSymLimSeqFinal} and the $\lambda_\ast$-periodicity of $\overline{u}$ (along the direction $e$), it must be $\hat{x} - n e\lambda_\ast \in \{\overline{u} = \vartheta\}$ for every $n \in \mathbb{Z}$, and thus, using \eqref{eq:1DSYmLastInclLevelth}, it follows $\hat{x} - n e\lambda_\ast \in \{x_N \geq -\overline{C} ( 1 + \sqrt{|x'|})\}$ for every $n \in \mathbb{Z}$. Using the definition of $e$ and passing to the limit as $n \to +\infty$, we find
\[
\begin{aligned}
0 &\leq (\hat{x} - ne\lambda_\ast)_N + \overline{C} (1 + \sqrt{|(\hat{x} - ne\lambda_\ast)'|}) \\
& =  \hat{x}_N - n\sigma\lambda_\ast + \overline{C} \big(1 + \sqrt{|\hat{x}' - ne'\lambda_\ast|} \big) \to -\infty,
\end{aligned}
\]
as $n \to +\infty$, a contradiction, and \eqref{eq:1DSymSlidindThGC} follows.

\smallskip

\emph{Step 7.} By \eqref{eq:1DSymSlidindTh}, we have $\partial_e u \geq 0$ in $\mathbb{R}^N$, independently of $\sigma > 0$ (cf. \eqref{eq:1DSymDefeulambda}) and so $\partial_{(e',0)} u \geq 0$ in $\mathbb{R}^N$, for every $e' \in \Sf^{N-1}\cap\{x_N = 0\}$. Since $\partial_{(-e',0)} u = -\partial_{(e',0)} u \leq 0$ in $\mathbb{R}^N$ and $e'$ is arbitrary, it must be $\partial_{(e',0)} u = 0$ in $\mathbb{R}^N$ for every $e'\in \Sf^{N-1}\cap\{x_N = 0\}$, that is $u$ is $1$D.
\end{proof}
%
%
%%%%%%%%%%%%%%%%%%%%%%%%%%%%%%%%%%%%%%%%%%%%%%%%%%%%%%%%%%%%%%%%%%%%%%%%%%%%%%%%%%%%%%%%%%%%%%%%%%%%%%%%%%%%%%%%%%%%%
%
%
\begin{proof}[Proof of Theorem \ref{thm:main}] Let $u: \RR^N \to \RR_+$ be a critical point of $\mathcal E$ in  $\RR^N$ satisfying \eqref{eq:1DSymAss10} and \eqref{eq:1DSymAss20} for some $R_k \to \infty$ and $\dd_k \to 0$. Setting $\vep_k := R^{-k}$ and scaling, we immediately see that $u_{\vep_k} := \vep_k u(\cdot/\vep_k)$ satisfies \eqref{eq:1DSymAss1} and \eqref{eq:1DSymAss2} for $k$ large and so $u(x) = v(x_N)$ for some $v:\RR\to\RR$ (up to a rotation), by Theorem \ref{thm:AsymFlatImplies1D}. 

On the other hand, by Lemma \ref{lem:1DSolution}, we know there are exactly three families of 1D solutions (cf. (i), (ii), (iii) of Lemma \ref{lem:1DSolution} with $\vep=1$). However, by \eqref{eq:1DSymAss1}, we have $u_{\vep_k} \to (x_N)_+$ locally uniformly, up to a translation and a rotation, and thus $v$ cannot be of class (ii) and (iii). The only possibility is that $v$ is of class (i). Recalling that $v(0) = \vth_1$ by construction, a direct integration of \eqref{eq:ODEEnergyCons} (with $A=1$) yields (cf. \eqref{eq:1DImplicitv})
\[
v^{-1}(z) = \int_{\vth_1}^z \frac{\rd \zeta}{\sqrt{\Phi(\zeta)}},
\]
for every $z \in \RR$, which is \eqref{wtioewhot}, up to a shift.
\end{proof}
%
%
%%%%%%%%%%%%%%%%%%%%%%%%%%%%%%%%%%%%%%%%%%%%%%%%%%%%%%%%%%%%%%%%%%%%%%%%%%%%%%%%%%%%%%%%%%%%%%%%%%%%%%%%%%%%%%%%%%%%%
%
%
%
\begin{proof}[Proof of Corollary \ref{cor:main}] Let $u: \RR^N\to \mathbb \RR_+$ be an entire local minimizer of  $\mathcal E$ in $\RR^N$. Up to shift, we may assume $u(0) = \vth_1$. If $\{R_j\}_{j \in \NN}$ is an arbitrary sequence satisfying $R_j \to +\infty$ as $j \to +\infty$ then, by Proposition \ref{prop:main}, there exist sequences $R_{j_\ell} \to +\infty$, $\dd_\ell \to 0$ and a 1-homogeneous nontrivial entire local minimizer $u_0$ of \eqref{eq:Energy1Phase} with $0 \in \partial\{u_0 > 0\}$, such that \eqref{eq:1DSymAss1XX} and \eqref{eq:1DSymAss2XX} hold true (with $k=j$). Consequently, since we know that
\[
u_0(x) = (\nu\cdot x)_+,
\]
for some $\nu \in \Sf^{N-1}$ (see \cite{CafJerKen2004:art,JerisonSavin2009:art}), we deduce that \eqref{eq:1DSymAss10} and \eqref{eq:1DSymAss20} are satisfied too, and thus $u$ satisfies \eqref{wtioewhot} by Theorem \ref{thm:main}.
\end{proof}
%

%%%%%%%%%%%%%%%%%%%%%%%%%%%%%%%%%%%%%%%%%%%%%%%%%%%%%%%%%%%%%%%%%%%%%%%%%%%%%%%%%%%%%%%%%%%%%%%%%%%%%%%%%%%%%

% REFERENCES

%%%%%%%%%%%%%%%%%%%%%%%%%%%%%%%%%%%%%%%%%%%%%%%%%%%%%%%%%%%%%%%%%%%%%%%%%%%%%%%%%%%%%%%%%%%%%%%%%%%%%%%%%%%%%

%\newpage

\end{document}